\newtheorem{thm}{Theorem}[section]
\newtheorem{prop}[thm]{Proposition}
\newtheorem{lem}[thm]{Lemma}
\newtheorem{cor}[thm]{Corollary}
\theoremstyle{definition}
\newtheorem{dfn}[thm]{Definition}
\theoremstyle{definition}
\newtheorem{eg}[thm]{Example}
\theoremstyle{remark}
\newtheorem{rem}[thm]{Remark}
\newcommand{\Hom}{{\rm Hom}}
\newcommand{\Map}{{\rm Map}}
\newcommand{\Sing}{{\rm Sing}}
\newcommand{\A}{\mathsf{A}}
\newcommand{\K}{\mathsf{K}}
\renewcommand{\L}{\mathsf{L}}
\newcommand{\Sd}{\mathsf{Sd}}
\newcommand{\Del}{\mathsf{\Delta}}
\newcommand{\Lam}{\mathsf{\Lambda}}
\newcommand{\Z}{\mathbb{Z}}
\newcommand{\D}{\Delta}
\title{Box complexes and homotopy theory of graphs}
\begin{document}

\author{Takahiro Matsushita}
\email{mtst@math.kyoto-u.ac.jp}
\address{Department of Mathematics, Kyoto University, Kitashirakawa Oiwake-cho Sakyo-ku, Kyoto 606-8502, Japan}

\subjclass[2010]{Primary 55U10; Secondary 05C15}

\begin{abstract}
We introduce a model structure on the category of graphs, which is Quillen equivalent to the category of $\Z_2$-spaces. A weak equivalence is a graph homomorphism which induces a $\Z_2$-homotopy equivalence between their box complexes. The box complex is a $\Z_2$-space associated to a graph, considered in the context of the graph coloring problem. In the proof, we discuss the universality problem of the Hom complex.
\end{abstract}

\maketitle

\section{Introduction}

We consider the category of graphs from the viewpoint of homotopical algebra. As a result, we construct a model structure on the category of graphs which is Quillen equivalent to the category of $\Z_2$-spaces. A weak equivalence is a graph homomorphism which induces a $\Z_2$-homotopy equivalence between their box complexes.

The box complex was introduced in the context of the graph coloring problem. An {\it $n$-coloring of a graph $G$} is a map from the vertex set of $G$ to the $n$-point set $\{ 1,\cdots, n\}$ so that adjacent vertices have different values. The {\it chromatic number $\chi(G)$ of $G$} is the smallest integer $n$ such that $G$ has an $n$-coloring. The graph coloring problem is to determine the chromatic number, and this is one of the most classical problems in graph theory.

The first application of algebraic topology to this subject is Lov\'asz's proof of the Kneser conjecture \cite{Lovasz}. Lov\'asz introduced the neighborhood complex $N(G)$ of a graph $G$, and showed that if the neighborhood complex of $G$ is $n$-connected, then the chromatic number of $G$ is greater than $n + 2$. The box complex $B(G)$ is a $\Z_2$-space which is homotopy equivalent to the neighborhood complex $N(G)$. The precise definition will be found in Section 2.

The Hom complex $\Hom(T,G)$ is a generalization of the box complex. If $T$ is a right $\Gamma$-graph, then the Hom complex $\Hom(T,G)$ becomes a left $\Gamma$-space and a graph homomorphism $f: G_1 \rightarrow G_2$ induces a $\Gamma$-map $f_* : \Hom(T,G_1) \rightarrow \Hom(T,G_2)$. Since an $n$-coloring of a graph $G$ is identified with a graph homomorphism from $G$ to $K_n$, we have that if there is no $\Gamma$-map from $\Hom(T,G)$ to $\Hom(T,K_n)$ then we have $\chi(G) \geq n$. The equivariant homotopy of the Hom complexes has been extensively researched (see \cite{BK1}, \cite{BK2}, \cite{DS}, \cite{Matsushita 1}, and \cite{Schultz} for example).

Therefore it is important to compare the category of graphs with the category of $\Gamma$-spaces of some group $\Gamma$. This is the motivation of this research. Our main result (Theorem \ref{thm 5.1}) asserts that the category of graphs has the model structure whose homotopy category is equivalent to the homotopy category of $\Z_2$-spaces.

\subsection{Singular complex functor and its adjoint}

Let $\Gamma$ be a finite group and $T$ a finite right $\Gamma$-graph. The functor $G \mapsto \Hom_T(G) = \Hom(T,G)$ has neither a left nor a right adjoint, and hence it is not a Quillen functor. So we use the singular complex functor $\Sing_T(G) = \Sing(T,G)$ introduced in \cite{Matsushita 2}. It is known that the singular complexes and the Hom complexes are homotopy equivalent (see Theorem \ref{thm sing_hom_eq} and Corollary \ref{cor 4.2.2}), and we will show that the functor
$$\Sing_T : \mathcal{G} \longrightarrow {\bf SSet}^{\Gamma}, \; G \mapsto \Sing(T,G)$$
is a right adjoint functor (Proposition \ref{prop 4.2.3}). Here $\mathcal{G}$ denotes the category of graphs and ${\bf SSet}^\Gamma$ is the category of $\Gamma$-simplicial sets. Let $A_T$ denote the left adjoint of $\Sing_T$. In the case of simplicial complexes, a similar construction $\A_T(\K)$ was obtained in \cite{DS} (see the end of Section 3.1).

Consider the unit of the adjoint pair
$$(A_T \circ {\rm Sd}^k, {\rm Ex}^k \circ {\rm Sing}_T) : {\bf SSet}^{\Gamma} \longrightarrow \mathcal{G},$$
where ${\rm Ex}$ is Kan's extension functor (see Section 2.2). We take $k$ to be sufficiently large. If this adjoint pair is a Quillen equivalence between ${\bf SSet}^{\Gamma}$ and $\mathcal{G}$, then the unit
$${\rm Id} \longrightarrow {\rm Ex}^k \circ {\rm Sing}_T \circ A_T \circ {\rm Sd}^k$$
must be a natural weak equivalence. The main task of Section 3 is to characterize the condition of $T$ that the unit is a natural $\Gamma$-weak equivalence.

For a right $\Gamma$-graph $T$ and for an element $\gamma$ of $\Gamma$, let $\alpha_\gamma$ denote the graph homomorphism $G \rightarrow G$, $v \mapsto v \gamma$. Consider the following two conditions concerning a finite right $\Gamma$-graph $T$:

\begin{itemize}
\item[(A)] For each subgroup $\Gamma'$ of $\Gamma$, the map $\Gamma / \Gamma' \rightarrow \Hom(T,T/\Gamma')$, $\gamma \Gamma' \mapsto p \circ \alpha_\gamma$ is a $\Gamma$-homotopy equivalence. The $\Gamma$-action on $\Hom(T,T/\Gamma')$ is described in Section 2.1.
\item[(B)] The map $\Gamma \mapsto \Hom(T,T)$, $\gamma \mapsto \alpha_\gamma$ is a $\Gamma$-homotopy equivalence.
\end{itemize}

Here we consider $\Gamma / \Gamma'$ as a discrete space and $p$ denotes the projection $T \rightarrow T/\Gamma'$. Clearly, (A) implies (B). The main structural results of Section 3 are the following two theorems:

\begin{thm}[Theorem \ref{thm 3.4.1}] \label{thm 1.1}
Let $\Gamma$ be a finite group and $T$ a finite connected right $\Gamma$-graph with at least one edge. Let $k$ be an integer such that $2^{k-2}$ is greater than the diameter of $T$. Then the following are equivalent:
\begin{itemize}
\item[(1)] The unit of the adjoint pair $(A_T \circ {\rm Sd}^k , {\rm Ex}^k \circ  {\rm Sing}_T)$ is a natural $\Gamma$-weak equivalence.
\item[(2)] The right $\Gamma$-graph $T$ satisfies the condition (A).
\end{itemize}
\end{thm}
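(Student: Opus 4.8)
The plan is to analyze the unit $\eta_X\colon X\to {\rm Ex}^k\,{\rm Sing}_T\,A_T\,{\rm Sd}^k\,X$ first on the equivariant cells $\Gamma/\Gamma'\times\Delta^n$ and then bootstrap to all $\Gamma$-simplicial sets. First I would compute $A_T\,{\rm Sd}^k(\Gamma/\Gamma')$: since $\Gamma/\Gamma'$ is discrete, ${\rm Sd}^k$ has no effect, and the natural bijections $\Hom_{\mathcal G}(A_T(\Gamma/\Gamma'),G)\cong\Hom_{{\bf SSet}^\Gamma}(\Gamma/\Gamma',{\rm Sing}_T G)\cong({\rm Sing}_T G)_0^{\Gamma'}\cong\Hom_{\mathcal G}(T,G)^{\Gamma'}\cong\Hom_{\mathcal G}(T/\Gamma',G)$ identify $A_T(\Gamma/\Gamma')$ with $T/\Gamma'$, naturally in $\Gamma'$. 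Tracing the unit through this identification shows that $\eta_{\Gamma/\Gamma'}$ is ${\rm Ex}^k$ applied to the map $\Gamma/\Gamma'\to{\rm Sing}_T(T/\Gamma')$ adjoint to $\mathrm{id}_{T/\Gamma'}$, i.e.\ $\gamma\Gamma'\mapsto p\circ\alpha_\gamma$ on $0$-simplices. Composing with the natural $\Gamma$-homotopy equivalence ${\rm Sing}_T(T/\Gamma')\simeq\Hom(T,T/\Gamma')$ of Corollary \ref{cor 4.2.2} (which on $0$-simplices is the tautological identification of graph homomorphisms) and using that ${\rm Ex}^k$ preserves $\Gamma$-homotopy equivalences, one sees that $\eta_{\Gamma/\Gamma'}$ is, up to natural $\Gamma$-equivalence, exactly the comparison map in condition (A). Since a $\Gamma$-weak equivalence between the $\Gamma$-spaces in question is a $\Gamma$-homotopy equivalence, $\eta_{\Gamma/\Gamma'}$ is a $\Gamma$-weak equivalence if and only if (A) holds for $\Gamma'$. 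This already gives $(1)\Rightarrow(2)$, and supplies the base case of the converse.

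For $(2)\Rightarrow(1)$, assume (A) and let $\mathcal W\subseteq{\bf SSet}^\Gamma$ be the class of $X$ for which $\eta_X$ is a $\Gamma$-weak equivalence. Every $\Gamma$-simplicial set is a relative cell complex built from the cells $\Gamma/\Gamma'\times\Delta^n$ by pushouts along the generating cofibrations $\Gamma/\Gamma'\times\partial\Delta^n\hookrightarrow\Gamma/\Gamma'\times\Delta^n$ and transfinite composition, so it suffices to show that $\mathcal W$ contains the cells and is closed under these operations and under retracts. For the cells: $\eta_{\Gamma/\Gamma'}$ is a $\Gamma$-weak equivalence by the previous paragraph together with (A), and for general $n$ one uses naturality of $\eta$ along $\Gamma/\Gamma'\times\Delta^0\hookrightarrow\Gamma/\Gamma'\times\Delta^n$ and $2$-out-of-$3$, noting that $A_T\,{\rm Sd}^k$ carries simplicial homotopy equivalences to homotopy equivalences of graphs (this compatibility is built into the construction of the singular complex and its adjoint) and that ${\rm Sing}_T$, being $\Hom(T,-)$ up to the equivalence of Corollary \ref{cor 4.2.2}, sends homotopy equivalences of graphs to $\Gamma$-homotopy equivalences. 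Closure under transfinite composition and retracts is formal, using that ${\rm Sd}^k$, ${\rm Ex}^k$ and (because $T$ is finite) ${\rm Sing}_T$ commute with filtered colimits, that $A_T$ is a left adjoint, and that $\Gamma$-weak equivalences are closed under transfinite composition of monomorphisms and under retracts.

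The one nonformal ingredient — and the step I expect to be the main obstacle — is closure of $\mathcal W$ under pushout along a generating cofibration. Here $A_T\,{\rm Sd}^k$ preserves the pushout, the pushout of $\Gamma$-simplicial sets along a monomorphism is a homotopy pushout, and ${\rm Ex}^k$ preserves homotopy pushouts, so the whole argument reduces to a \emph{gluing lemma} for ${\rm Sing}_T$, equivalently for $\Hom(T,-)$: that it carries the pushout $A_T\,{\rm Sd}^k X\cup_{A_T\,{\rm Sd}^k(\Gamma/\Gamma'\times\partial\Delta^n)}A_T\,{\rm Sd}^k(\Gamma/\Gamma'\times\Delta^n)$ to a homotopy pushout. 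This is genuinely delicate, since $\Hom(T,-)$ does \emph{not} take arbitrary pushouts of graphs to homotopy pushouts — that very failure is what makes the chromatic number hard to extract from Hom complexes. What rescues the argument is that $A_T\,{\rm Sd}^k(\Gamma/\Gamma'\times\partial\Delta^n)\hookrightarrow A_T\,{\rm Sd}^k(\Gamma/\Gamma'\times\Delta^n)$ is an inclusion of graphs of a very particular, ``tame'' shape, and this is exactly where the hypothesis $2^{k-2}>\mathrm{diam}(T)$ is used: a coarser subdivision would force identifications inside $A_T$ that $\Hom(T,-)$ does not respect, whereas above this threshold the pushout is preserved up to $\Gamma$-homotopy (and ${\rm Ex}^k\,{\rm Sing}_T$ of the graphs involved is Kan, so the relevant $\Gamma$-weak equivalences become $\Gamma$-homotopy equivalences and the gluing comparison goes through). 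Establishing this gluing property for the Hom complex on the subdivided cells — essentially the universality statement for $\Hom(T,-)$ announced in the introduction — is the technical core of the proof; everything else is bookkeeping with adjunctions and the standard closure properties of $\Gamma$-weak equivalences.
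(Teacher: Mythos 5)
Your overall architecture agrees with the paper's: identify the unit on $\Gamma/\Gamma'$ with the comparison map of condition (A) (this gives $(1)\Rightarrow(2)$ and the base case), then run a cell induction over the generating cofibrations, with the gluing step as the crux. However, there are two genuine gaps. First, your treatment of the cells $(\Gamma/\Gamma')\times\Delta[n]$ rests on the claim that $A_T\circ{\rm Sd}^k$ ``carries simplicial homotopy equivalences to homotopy equivalences of graphs'' and that this ``compatibility is built into the construction.'' It is not: the paper explicitly warns that $|\Sing_T\circ A_T(L)|$ need not be a deformation retract of $|\Sing_T\circ A_T(K)|$ even when $|L|$ is a deformation retract of $|K|$. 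What the construction does respect is the much more rigid notion of \emph{strong $\Gamma$-collapse}: the paper shows that $(\Gamma/\Gamma')\times\mathsf{\Lambda}^n_r$ strongly $\Gamma$-collapses to $\Gamma/\Gamma'$, that strong collapse is preserved by ${\rm Sd}$, and that $\A_T$ converts a strong $\Gamma$-collapse into a $\times$-homotopy deformation retract (Proposition \ref{prop 4.3.1}); only then does Lemma \ref{lem 2.2.1} give the homotopy equivalence on $\Sing_T$. Your ``2-out-of-3 along $\Delta^0\hookrightarrow\Delta^n$'' shortcut needs to be replaced by this combinatorial argument.

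Second, and more seriously, you correctly isolate the gluing lemma for $\Sing_T$ on the subdivided cells as ``the technical core'' and ``the main obstacle,'' but you do not prove it — you only assert that above the diameter threshold ``the pushout is preserved up to $\Gamma$-homotopy.'' This is precisely the content of the paper's Section 4: one shows that $({\rm Sd}^k\K,{\rm Sd}^k\L)$ is a $2^{k-2}$-NDR pair (a neighborhood of ${\rm Sd}^k\L$ of combinatorial radius $2^{k-2}$ strongly $\Gamma$-collapses onto it), that $\A_T$ preserves this structure, and then that $\Sing_T$ takes the pushout to a strict pushout after enlarging $H$ to its $r$-neighborhood $H'$ — the key point being that $T\times\Sigma^n$ is connected of diameter $\leq r$, so any homomorphism $T\times\Sigma^n\to Y\cup_H G$ whose image meets $Y$ must land entirely in the $r$-neighborhood $Y'$ of $Y$, and hence factors through $Y'$ or through $G$. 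Combined with the fact that $H\hookrightarrow H'$ and $Y\hookrightarrow Y'$ are $\times$-deformation retracts, this yields the homotopy pushout. Without this argument (Corollary \ref{cor 4.3.4} and Theorem \ref{thm 4.3.5}), the inductive step of your proof does not close, so the proposal as written is an outline of the paper's strategy rather than a proof. A minor additional point: ${\rm Ex}^k$ for finite $k$ does not produce Kan complexes, so your parenthetical justification for upgrading weak equivalences to homotopy equivalences should instead invoke Whitehead's theorem for (realizations of) $\Gamma$-CW complexes, i.e.\ Proposition \ref{Bredon}.
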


\begin{thm}[Theorem \ref{thm 3.4.2}] \label{thm 1.2}
Let $\Gamma$ be a finite group and $T$ a finite connected right $\Gamma$-graph with at least one edge. Let $k$ be an integer such that $2^{k-2}$ is greater than the diameter of $T$. Then the following are equivalent:
\begin{itemize}
\item[(1)] For a free $\Gamma$-simplicial set $K$, the unit map $K \rightarrow {\rm Ex}^k \circ \Sing_T \circ A_T \circ {\rm Sd}^k(K)$ is a $\Gamma$-weak equivalence.
\item[(2)] The right $\Gamma$-graph $T$ satisfies the condition (B).
\end{itemize}
\end{thm}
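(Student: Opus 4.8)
\emph{Strategy.} I would prove the equivalence by reducing everything to the single free $\Gamma$-simplicial set $K = \Gamma$ with its regular left action, where the unit can be computed by hand, and then---for the implication (2)$\Rightarrow$(1)---propagating the conclusion to all free $\Gamma$-simplicial sets by an induction over free cells. The whole argument runs parallel to the proof of Theorem \ref{thm 3.4.1} with the isotropy subgroup $\Gamma'$ specialized to the trivial subgroup; I only indicate where the two differ.

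\emph{The unit at $K = \Gamma$.} Subdivision and ${\rm Ex}$ fix discrete simplicial sets, so ${\rm Sd}^k \Gamma = \Gamma$ and ${\rm Ex}^k \Gamma = \Gamma$. Since the $0$-simplices of $\Sing(T,-)$ are the graph homomorphisms out of $T$ (from \cite{Matsushita 2}), the adjunction $A_T \dashv \Sing_T$ gives $\Hom_{\mathcal{G}}(A_T \Gamma, G) \cong \Sing_T(G)_0 \cong \Hom_{\mathcal{G}}(T,G)$ naturally in $G$, hence $A_T \Gamma \cong T$; tracing the unit of the adjoint pair $(A_T \circ {\rm Sd}^k, {\rm Ex}^k \circ \Sing_T)$ at $\Gamma$ through this identification, it becomes the composite $\Gamma \xrightarrow{\gamma \mapsto \alpha_\gamma} \Sing(T,T)_0 \hookrightarrow \Sing(T,T) \to {\rm Ex}^k \Sing(T,T)$. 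Now $X \to {\rm Ex}^k X$ is always a $\Gamma$-weak equivalence (it is a weak equivalence and $({\rm Ex}^k X)^{\Gamma'} = {\rm Ex}^k (X^{\Gamma'})$ for every subgroup $\Gamma'$), and $\Sing(T,T)$ is $\Gamma$-homotopy equivalent to $\Hom(T,T)$ compatibly with $\gamma \mapsto \alpha_\gamma$ (Theorem \ref{thm sing_hom_eq}, Corollary \ref{cor 4.2.2}); so the unit at $\Gamma$ is a $\Gamma$-weak equivalence if and only if $\Gamma \to \Hom(T,T)$, $\gamma \mapsto \alpha_\gamma$, is one, and by the equivariant Whitehead theorem (source and target are $\Gamma$-CW complexes) this is exactly condition (B). Applying (1) to $K = \Gamma$ therefore yields (1)$\Rightarrow$(2), and conversely (B) makes the unit a $\Gamma$-weak equivalence at $\Gamma$.

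\emph{From $\Gamma$ to all free $K$.} Assume (B) and let $\mathcal{C}$ be the class of $K \in {\bf SSet}^\Gamma$ for which the unit $K \to {\rm Ex}^k \Sing_T A_T {\rm Sd}^k(K)$ is a $\Gamma$-weak equivalence. I would show: (i) $\Gamma \times \Delta^n \in \mathcal{C}$ for all $n$, because $\Gamma \times \Delta^n \to \Gamma$ is a $\Gamma$-weak equivalence of free $\Gamma$-simplicial sets and $A_T \circ {\rm Sd}^k$ takes such maps to weak equivalences; (ii) $\mathcal{C}$ is closed under coproducts (using that $T$ is connected with an edge, so $\Sing_T$ commutes with coproducts of graphs, together with the fact that ${\rm Sd}$, ${\rm Ex}$, $A_T$ do as well) and under sequential colimits along cofibrations (using finiteness of $T$, so that $\Sing_T$ commutes with filtered colimits, and that ${\rm Ex}$, ${\rm Sd}$, $A_T$ do); (iii) $\mathcal{C}$ is closed under attaching a free cell, i.e. for $K' \in \mathcal{C}$ and a $\Gamma$-map $\Gamma \times \partial\Delta^n \to K'$ with $\Gamma \times \partial\Delta^n \in \mathcal{C}$ (which holds by induction on dimension, $\Gamma \times \partial\Delta^n$ being a free $\Gamma$-simplicial set of dimension $n-1$), the pushout $K' \cup_{\Gamma \times \partial\Delta^n}(\Gamma \times \Delta^n)$ lies in $\mathcal{C}$: here $A_T \circ {\rm Sd}^k$ sends the defining pushout to a pushout of graphs along a cofibration, ${\rm Ex}^k \circ \Sing_T$ sends that to a homotopy pushout of $\Gamma$-simplicial sets, and the gluing lemma for $\Gamma$-weak equivalences finishes it. Feeding this induction through the skeletal filtration of an arbitrary free $\Gamma$-simplicial set---with base case a free $\Gamma$-set, which is a coproduct of copies of $\Gamma$---shows that every free $K$ lies in $\mathcal{C}$, i.e. (1) holds.

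\emph{Main obstacle.} The homotopy-theoretic facts in the second paragraph are routine; the weight of the proof rests on the two inputs used in the third: that $A_T \circ {\rm Sd}^k$ carries monomorphisms of $\Gamma$-simplicial sets to cofibrations of graphs and $\Gamma$-weak equivalences between free $\Gamma$-simplicial sets to weak equivalences, and that $\Sing_T$ (hence ${\rm Ex}^k \circ \Sing_T$) carries pushouts of graphs along such cofibrations to homotopy pushouts. These are precisely the ``homotopical cocontinuity'' properties of the Hom/box complex, and I expect to have them from the analysis of Section 3 that also underlies Theorem \ref{thm 3.4.1}; this is where connectedness, finiteness, and the bound $2^{k-2} > {\rm diam}(T)$ on $T$ are genuinely needed, and arranging the pushout square above to be a homotopy pushout is the delicate point.
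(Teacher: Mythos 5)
Your proposal follows essentially the same route as the paper: the paper proves Theorem \ref{thm 3.4.2} by observing that free $\Gamma$-simplicial sets are exactly the cell complexes built from $\Gamma \times \partial\Delta[n] \hookrightarrow \Gamma \times \Delta[n]$ and rerunning the cell induction of Theorem \ref{thm 3.4.1} with the trivial subgroup, using precisely the ingredients you name --- the unit at $K = \Gamma$ is the map of condition (B), Lemma \ref{lem 3.4.7} gives the $\times$-deformation retracts for $A_T \circ {\rm Sd}^k$ of the generating inclusions, and Proposition \ref{prop 3.4.8} together with Theorem \ref{thm 4.3.5} supplies the NDR gluing step where the diameter bound, finiteness, and connectedness of $T$ enter. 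The one caveat is that your stated input ``$A_T \circ {\rm Sd}^k$ carries $\Gamma$-weak equivalences between free $\Gamma$-simplicial sets to weak equivalences'' is too strong as phrased (it is close to the conclusion itself); what is actually available, and all you in fact use, is the specific fact that strong $\Gamma$-collapses such as $\Gamma \times \Delta[n] \searrow \Gamma$ become $\times$-deformation retracts after applying $A_T \circ {\rm Sd}^k$ (Proposition \ref{prop 4.3.1}).
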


If the graph $T$ is stiff (see Section 2.1 for the definition), then the condition (B) has the following combinatorial characterization: Every endomorphism of $T$ is an automorphism and the group $\Gamma$ is isomorphic to the automorphism group of $T$ (Lemma \ref{lem 3.4.3}). Such examples are given by complete graphs, odd cycles, and stable Kneser graphs (see Example \ref{eg 22}).

Theorem \ref{thm 1.2} concerns the universality problem of the Hom complexes. Csorba \cite{Csorba} showed that for every finite free $\Z_2$-complex $X$, there is a graph $G$ such that $\Hom(K_2,G)$ and $X$ are $\Z_2$-homotopy equivalent. Dochterman and Schultz \cite{DS} showed that for every free $S_n$-complex $X$, there is a graph $G$ such that $\Hom(K_n,G)$ and $X$ are $S_n$-homotopy equivalent. Here $S_n$ is the symmetric group of the $n$-element set $\{ 1,\cdots, n\}$. Thus Theorem \ref{thm 1.2} is a generalization of their results.

On the other hand, the condition (A) is rarely satisifed. I think that the following are (essntially) only examples naturally arising:
\begin{itemize}
\item[(1)] $\Gamma$ is trivial and $T$ is the graph ${\bf 1}$ consisting of one looped vertex. This corresponds to the clique complex of the maximal reflexive subgraph.
\item[(2)] $\Gamma$ is the cyclic group $\Z_2$ of order 2 and $T$ is $K_2$ with the flipping involution. This corresponds to the box complex.
\end{itemize}
This is a specific difference between the box complex and the other Hom complexes.

\subsection{Organization of the paper}

In Section 2, we introduce the notation and the terminology, and review some relevant facts. In Section 3, we state Theorem \ref{thm 1.1} and Theorem \ref{thm 1.2} and construct the model structures on the category of graphs (Theorem \ref{thm 5.1} and Theorem \ref{thm 5.2}). Here we give their proofs based on some structural results (Lemma \ref{lem 3.4.7} and Proposition \ref{prop 3.4.8}). These structural results are obtained in Section 4 by comparing the (equivariant) strong homotopy theory of simplicial complexes \cite{BM} with the $\times$-homotopy theory established by Dochtermann \cite{Dochtermann}.

\vspace{3mm} \noindent
{\bf Acknowledgements.}
I would like to express my gratitude to Toshitake Kohno for his indispensable advice and support. I would like to thank Dai Tamaki for his kind support during my stay at Shinshu University. I would like to thank Shouta Tounai for careful reading and helpful suggestions. Moreover, he provided me the proof of Proposition \ref{prop 4.2.3}. I thank Anton Dochtermann for valuable comments and a detailed answer to my question concerning his result. I also thank Jun Yoshida and an anonymous referee for valuable comments. This work was supported by a Grand-in-Aid for JSPS Fellows (No. ~25-4699 and 28-6304). This work was supported by the Program for Leading Graduate Schools, MEXT, Japan.

\section{Preliminaries}

In this section we shall introduce the notation and terminology, and review relevant facts which we will use in later sections.

Throughout the paper, $\Gamma$ denotes a finite group unless otherwise stated. We consider not only left group actions but also right group actions. However, we shall use the term ``a $\Gamma$-action" to mean ``a left $\Gamma$-action". For a category $\mathcal{C}$, we write $\mathcal{C}^{\Gamma}$ to indicate the category of objects in $\mathcal{C}$ equipped with a $\Gamma$-action.

For a poset $P$, the classifying space of $P$ is denoted by $|P|$. We often regard a poset as a topological space by its classifying space, and assign topological terminology by the classifying space functor. For example, two order-preserving maps $f, g: P \rightarrow Q$ are homotopic if and only if $|f|$ and $|g|$ are homotopic.

\subsection{Box complexes and Hom complexes of graphs}

For a concrete introduction to this subject, we refer to \cite{Kozlov book}.

A {\it graph} is a pair $G = (V(G),E(G))$ consisting of a set $V(G)$ together with a symmetric subset $E(G)$ of $V(G) \times V(G)$, i.e. $(x,y) \in E(G)$ implies $(y,x) \in E(G)$. A {\it graph homomorphism} is a map $f:V(G) \rightarrow V(H)$ such that $(f \times f)(E(G)) \subset E(H)$. Let $\mathcal{G}$ denote the category of graphs whose morphisms are graph homomorphisms.
For a vertex $v \in V(G)$, the {\it neighborhood $N_G(v)$ of $v$} is the set of vertices of $G$ adjacent to $v$. We sometimes abbreviate $N_G(v)$ to $N(v)$. Define the {\it complete graph $K_n$ with $n$-vertices} by $V(K_n) = \{ 1,\cdots, n\}$ and $E(K_n) = \{ (x,y) \; | \; x \neq y\}$. Then an $n$-coloring of $G$ is identified with a graph homomorphism from $G$ to $K_n$, and the chromatic number is
$$\chi(G) = \inf \{ n \geq 0 \; | \; \textrm{There is a graph homomorphism from $G$ to $K_n$.}\}.$$

The {\it box complex of a graph $G$} is the $\Z_2$-poset
$$B(G) = \{ (\sigma,\tau) \; | \; \sigma, \tau \in 2^{V(G)} \setminus \{ \emptyset\}, \sigma \times \tau \subset E(G)\}$$
ordered by the product of the inclusion orderings. The $\Z_2$-action of $B(G)$ is the exchange of the first and the second entries, i.e. $(\sigma ,\tau) \leftrightarrow (\tau,\sigma)$.

A {\it multi-homomorphism from $G$ to $H$} is a map $\eta : V(G) \rightarrow 2^{V(H)} \setminus \{ \emptyset\}$ such that $(v,w) \in E(G)$ implies $\eta(v) \times \eta(w) \subset E(H)$. For a pair of multi-homomorphisms $\eta$ and $\eta'$, we write $\eta \leq \eta'$ to mean that $\eta(v) \subset \eta'(v)$ for every vertex $v$ of $G$. The {\it Hom complex from $G$ to $H$} is the poset of the multi-homomorphisms from $G$ to $H$, and denoted by $\Hom(G,H)$.

This definition of the Hom complex is slightly different from the one of Babson and Kozlov \cite{BK1}. They define the Hom complex $\Hom(G,H)$ as a certain subcomplex of a direct product of simplices when $G$ and $H$ are finite. Our Hom complex is the face poset of theirs, and thus the topological types of these two definitions coincide.

Graph homomorphisms $f, g: G \rightarrow H$ are {\it $\times$-homotopic} (see \cite{Dochtermann}) if they belong to the same connected component of $\Hom(G,H)$. We write $f \simeq_\times g$ to mean that $f$ and $g$ are $\times$-homotopic. A graph homomorphism $f:G \rightarrow H$ is a {\it $\times$-homotopy equivalence} if there is a graph homomorphism $h : H \rightarrow G$ such that $hf \simeq_\times {\rm id}_G$ and $fh \simeq_\times {\rm id}_H$.

Let $G$, $H$, and $K$ be graphs. Define the composition map
$$* :\Hom(H,K) \times \Hom(G,H) \rightarrow \Hom(G,K), \; (\tau , \eta) \mapsto \tau * \eta$$
by
$$(\tau * \eta)(v) = \bigcup_{w \in \eta(v)} \tau(w).$$
If $f: G \rightarrow H$ and $g: H \rightarrow K$ are graph homomorphisms, we write $g_* (\eta)$ (or $f^*(\tau)$) instead of $g * \eta$ (or $\tau * f $, respectively).

Let $T$ be a right $\Gamma$-graph. For $\gamma \in \Gamma$, we write $\alpha_\gamma$ to indicate the graph homomorphism $T \rightarrow T$, $v \mapsto v \gamma$. Then we have a $\Gamma$-action on $\Hom(T,G)$ defined by $\gamma \eta = \alpha_\gamma^*(\eta)$, and a graph homomorphism $f: G_1 \rightarrow G_2$ induces a $\Gamma$-poset map $f_* : \Hom(T,G_1) \rightarrow \Hom(T,G_2)$. Consider $K_2$ as a $\Z_2$-graph by the flipping involution. Then it is clear that $\Hom(K_2, G)$ and $B(G)$ are isomorphic as $\Z_2$-posets.

\begin{lem}[See Theorem 5.1 of \cite{Dochtermann}] \label{lem 2.2.1}
Let $f$ and $g$ be graph homomorphisms from $G$ to $H$ and suppose $f \simeq_\times g$. For a right $\Gamma$-graph $T$, the following hold:
\begin{itemize}
\item[(1)] $f_*$, $g_* : \Hom(T,G) \rightarrow \Hom(T,H)$ are $\Gamma$-homotopic.
\item[(2)] $f^*$, $g^* : \Hom(H,T) \rightarrow \Hom(G,T)$ are $\Gamma$-homotopic.
\end{itemize}
\end{lem}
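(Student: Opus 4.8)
The plan is to run the standard poset-homotopy argument and carry the $\Gamma$-action along; the non-equivariant statement is Theorem 5.1 of \cite{Dochtermann}, so the only new content is the equivariance. First I would unwind the hypothesis: $f \simeq_\times g$ means that $f$ and $g$ lie in one connected component of the poset $\Hom(G,H)$, hence there is a finite fence $f = \eta_0, \eta_1, \dots, \eta_n = g$ of multi-homomorphisms in $\Hom(G,H)$ in which every consecutive pair is comparable. Since $\Gamma$-homotopy of maps of $\Gamma$-spaces is an equivalence relation (homotopies concatenate), it suffices to treat one comparable pair $\eta \le \eta'$ and then splice the resulting $\Gamma$-homotopies together along the fence.

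The key point is that $*$-composition makes sense for an arbitrary multi-homomorphism, not just for a graph homomorphism: for $\eta \in \Hom(G,H)$ one obtains poset maps
$$\eta_* = \eta * (-) \colon \Hom(T,G) \to \Hom(T,H), \qquad \eta^* = (-)*\eta \colon \Hom(H,T) \to \Hom(G,T),$$
which recover $f_*$ and $f^*$ when $\eta$ is the multi-homomorphism $v \mapsto \{f(v)\}$. Two properties are needed. First, both maps are $\Gamma$-equivariant: the $\Gamma$-actions on these Hom complexes are all given by $*$-composition with the automorphisms $\alpha_\gamma$ of $T$ on the $T$-side (replacing $\gamma$ by $\gamma^{-1}$ where a left action is wanted), so equivariance of $\eta_*$ and $\eta^*$ is just the associativity of $*$. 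Second, $*$ is monotone in each variable, directly from $(\tau*\eta)(v) = \bigcup_{w \in \eta(v)} \tau(w)$; hence $\eta \le \eta'$ forces $\eta_*(\zeta) \le (\eta')_*(\zeta)$ for all $\zeta \in \Hom(T,G)$ and $\eta^*(\tau) \le (\eta')^*(\tau)$ for all $\tau \in \Hom(H,T)$.

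To conclude, I would invoke the elementary fact that two $\Gamma$-equivariant order-preserving maps $F, F' \colon P \to Q$ with $F(p) \le F'(p)$ for all $p$ have $\Gamma$-homotopic classifying maps $|F|, |F'|$: the rule $(p,0) \mapsto F(p)$, $(p,1) \mapsto F'(p)$ defines an order-preserving map $P \times [1] \to Q$ (well-definedness is exactly the inequality $F \le F'$), it is equivariant for the trivial action on $[1] = \{0 < 1\}$, and under $|P \times [1]| \cong |P| \times [0,1]$ it realizes to a $\Gamma$-homotopy. Applying this to $\eta_* \le (\eta')_*$ and to $\eta^* \le (\eta')^*$ for each step of the fence and splicing gives $f_* \simeq_\Gamma g_*$ and $f^* \simeq_\Gamma g^*$.

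I do not anticipate a genuine obstacle here; the proof is organizational rather than difficult. The one place deserving care is checking that $\eta_*$ and, above all, $\eta^*$ really are $\Gamma$-equivariant, which comes down to fixing the left/right conventions so that the right $\Gamma$-graph structure on $T$ yields honest \emph{left} $\Gamma$-actions on $\Hom(T,-)$ and on $\Hom(-,T)$; once those are pinned down, equivariance is a one-line consequence of associativity of $*$.
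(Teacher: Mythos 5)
Your proof is correct and rests on the same engine as the paper's: the composition map $*$ is order-preserving in each variable and $\Gamma$-equivariant in the $T$-variable, so connectivity of $\Hom(G,H)$ transports to a $\Gamma$-homotopy. The paper phrases this topologically in one line (a path $\varphi:[0,1]\to|\Hom(G,H)|$ composed with $|*|$), whereas you discretize the path into a fence of comparable multi-homomorphisms and splice order homotopies; these are the same argument in two guises.
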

\begin{proof}
We only prove (1) since (2) is similarly proved. Let $\varphi : [0,1] \rightarrow |\Hom(G,H)|$ be a path joining $f$ to $g$. Then the composition of
$$\begin{CD}
[0,1] \times |\Hom(T,G)| @>{\varphi \times {\rm id}}>> |\Hom(G,H)| \times |\Hom(T,G)| @>{|*|}>> |\Hom(T,H)|
\end{CD}$$
is a $\Gamma$-homotopy from $f_*$ to $g_*$.
\end{proof}

A vertex $v$ of $G$ is {\it dismantlable} if there is another vertex $w$ of $G$ with $N(v) \subset N(w)$. If $v$ is dismantlable, then the inclusion $G \setminus v \hookrightarrow G$ is a $\times$-homotopy equivalence. Here $G \setminus v$ denotes the induced subgraph of $G$ whose vertex set is $V(G) \setminus \{ v\}$. In particular, we have the following:

\begin{cor}[Kozlov \cite{Kozlov}] \label{cor 2.2.2}
Let $G$ be a graph and $v$ a dismantlable vertex of $G$. For every right $\Gamma$-graph $T$, the inclusion $\Hom(T,G \setminus v) \hookrightarrow \Hom(T,G)$ is a $\Gamma$-homotopy equivalence.
\end{cor}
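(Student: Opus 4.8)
The plan is to reduce the corollary to the fact, recalled in the sentence immediately preceding it, that the inclusion $\iota : G \setminus v \hookrightarrow G$ is a $\times$-homotopy equivalence, and then to apply the functor $\Hom(T,-)$ together with Lemma \ref{lem 2.2.1}. First I would make the homotopy inverse of $\iota$ explicit: since $v$ is dismantlable, choose $w \neq v$ with $N(v) \subset N(w)$ and let $r : G \to G \setminus v$ be the fold that is the identity on $V(G) \setminus \{ v \}$ and sends $v$ to $w$. A short check shows $r$ is a graph homomorphism (the only case needing attention is an edge of $G$ incident to $v$, handled by $N(v) \subset N(w)$, applied twice when $v$ carries a loop). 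By construction $r \circ \iota = {\rm id}_{G \setminus v}$, and $\iota \circ r \simeq_\times {\rm id}_G$ because ${\rm id}_G$ and $\iota r$ both lie below the multi-homomorphism $v \mapsto \{ v, w \}$, $u \mapsto \{ u \}$ $(u \neq v)$ in $\Hom(G,G)$ --- this is exactly the assertion that $\iota$ is a $\times$-homotopy equivalence, so it can simply be quoted.

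Next I would push everything through $\Hom(T,-)$. As noted before Lemma \ref{lem 2.2.1}, $\iota$ and $r$ induce $\Gamma$-poset maps
$$\iota_* : \Hom(T, G \setminus v) \longrightarrow \Hom(T, G), \qquad r_* : \Hom(T, G) \longrightarrow \Hom(T, G \setminus v),$$
and $\iota_*$ is precisely the inclusion in the statement. Functoriality of the composition $*$ gives $r_* \circ \iota_* = (r \circ \iota)_* = {\rm id}$, while $\iota_* \circ r_* = (\iota \circ r)_*$ is $\Gamma$-homotopic to $({\rm id}_G)_* = {\rm id}$ by Lemma \ref{lem 2.2.1}(1), using $\iota \circ r \simeq_\times {\rm id}_G$. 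Hence $\iota_*$ is a $\Gamma$-homotopy equivalence with homotopy inverse $r_*$, as claimed.

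I do not expect a genuine obstacle: once the $\times$-homotopy equivalence $G \setminus v \simeq_\times G$ is granted, the corollary is a formal consequence of the functoriality of $\Hom(T,-)$ and of Lemma \ref{lem 2.2.1}, which turns $\times$-homotopies into $\Gamma$-homotopies. The only mildly fiddly points --- verifying that the fold $r$ is a homomorphism and that $\iota r \simeq_\times {\rm id}_G$ --- are already subsumed in the preceding sentence of the paper, so in practice the proof can open by invoking that sentence and proceed directly to applying $\Hom(T,-)$.
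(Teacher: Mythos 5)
Your argument is correct and is exactly the one the paper intends: the corollary is stated as an immediate consequence of the preceding observation that $G \setminus v \hookrightarrow G$ is a $\times$-homotopy equivalence, combined with Lemma \ref{lem 2.2.1}(1), which is precisely how you proceed (with the fold $r$ made explicit). No issues.
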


A graph $G$ is {\it stiff} if $G$ has no dismantlable vertices.

\begin{lem}[Lemma 6.5 of \cite{Dochtermann}] \label{lem stiff}
If $G$ is a stiff graph, then every automorphism of $G$ is an isolated point of $\Hom(G,G)$.
\end{lem}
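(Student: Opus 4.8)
The plan is to identify an automorphism $f$ of $G$ with the multi-homomorphism $\eta_f \in \Hom(G,G)$ given by $\eta_f(v) = \{f(v)\}$, and to show that $\eta_f$ is simultaneously a \emph{minimal} and a \emph{maximal} element of the poset $\Hom(G,G)$. Since a point of a poset $P$ that is both minimal and maximal has no element comparable to it other than itself, its connected component in the classifying space $|P|$ is the single vertex; so establishing these two extremality properties shows that $f$ is an isolated point of $\Hom(G,G)$ (understood, as usual here, in $|\Hom(G,G)|$).

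Minimality is immediate: a multi-homomorphism has nonempty values, and $\eta_f$ already assigns singletons, so nothing lies strictly below $\eta_f$. The content is maximality. Suppose $\eta \in \Hom(G,G)$ satisfies $\eta \geq \eta_f$, i.e. $f(v) \in \eta(v)$ for every vertex $v$, and assume toward a contradiction that $\eta \neq \eta_f$. Then there is a vertex $v_0$ and a vertex $w \in \eta(v_0)$ with $w \neq f(v_0)$. For each neighbor $u$ of $v_0$, the multi-homomorphism condition gives $\eta(v_0) \times \eta(u) \subset E(G)$; since $f(u) \in \eta(u)$, this yields $(w, f(u)) \in E(G)$. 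Hence $f(N_G(v_0)) \subset N_G(w)$.

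Now I would use that $f$ is an automorphism: both $f$ and $f^{-1}$ are graph homomorphisms, so $f(N_G(v_0)) = N_G(f(v_0))$. Combining with the previous inclusion, $N_G(f(v_0)) \subset N_G(w)$ with $w \neq f(v_0)$, which says exactly that $f(v_0)$ is a dismantlable vertex of $G$ — contradicting stiffness. Therefore $\eta = \eta_f$, so $\eta_f$ is maximal, and the lemma follows. The argument is essentially formal; the only step requiring a little care is the equality $f(N_G(v_0)) = N_G(f(v_0))$, where surjectivity of $f$ (not merely the homomorphism property) is used, together with the bookkeeping that the witness $w$ is genuinely distinct from $f(v_0)$, which is guaranteed by the assumption $\eta \neq \eta_f$.
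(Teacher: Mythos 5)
Your proof is correct, and it is essentially the standard argument (the paper itself gives no proof, citing Lemma 6.5 of Dochtermann, whose proof runs along the same lines): an automorphism $f$ is a minimal element of $\Hom(G,G)$, and any $\eta>\eta_f$ with a witness $w\in\eta(v_0)\setminus\{f(v_0)\}$ forces $N(f(v_0))=f(N(v_0))\subset N(w)$, exhibiting $f(v_0)$ as dismantlable. The two points you flag — using $f^{-1}$ to get the equality $f(N(v_0))=N(f(v_0))$, and checking $w\neq f(v_0)$ so that $w$ is genuinely ``another vertex'' in the definition of dismantlability — are exactly the right ones, and both are handled correctly.
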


\subsection{$\Gamma$-simplicial sets}

For a concrete explanation of simplicial sets, we refer to \cite{GJ}.

We write $\D$ to indicate the cosimplicial indexing category. Let ${\bf SSet}$ denote the category of simplicial sets. The geometric realization of a simplicial set $K$ is denoted by $|K|$. A simplicial map $f: K \rightarrow L$ is called a weak equivalence if the map $|f| : |K| \rightarrow |L|$ induced by $f$ is a homotopy equivalence.

For a $\Gamma$-simplicial set $K$ and a subgroup of $\Gamma'$, let $K^{\Gamma'}$ denote the subcomplex of $K$ consisting of the simplices fixed by $\Gamma'$. A $\Gamma$-simplicial map $f: K \rightarrow L$ is a $\Gamma$-weak equivalence if $f^{\Gamma'} : K^{\Gamma'} \rightarrow L^{\Gamma'}$ is a weak equivalence for every subgroup $\Gamma'$ of $\Gamma$.

Then ${\bf SSet}^\Gamma$ has the model structure described as follows (see  \cite{Bohamann et al} or \cite{Stephan}) whose generating cofibrations is
$$\mathcal{I}_\Gamma = \{ (\Gamma / \Gamma') \times \partial \D [n] \hookrightarrow (\Gamma / \Gamma') \times \D [n] \; | \; \textrm{$n \geq 0$ and $\Gamma' $ is a subgroup of $\Gamma $.}\}$$
and whose generating trivial cofibrations is
$$\mathcal{J}_\Gamma = \{ (\Gamma / \Gamma') \times \Lambda^r [n] \hookrightarrow (\Gamma / \Gamma') \times \D [n] \; | \; \textrm{$n \geq 1$, $0 \leq r \leq n$, and $\Gamma' $ is a subgroup of $\Gamma $.}\}.$$
Here $\D [n]$ is the Yoneda functor $[m] \mapsto \Delta ([m], [n])$ and $\Lambda^r[n]$ is the $r$-horn. The class of weak equivalences is the class of $\Gamma$-weak equivalences.

Let ${\rm Ex}$ denote Kan's extension functor, and ${\rm Sd}$ the barycentric subdivision functor. There is a natural weak equivalence ${\rm Sd}(K) \rightarrow K$ whose adjoint $K \rightarrow {\rm Ex}(K)$ is also a natural weak equivalence (Theorem 4.6 of \cite{GJ}). The adjoint pair $({\rm Sd}, {\rm Ex}) :{\bf SSet} \rightarrow {\bf SSet}$ gives rise to the adjoint pair $({\rm Sd}, {\rm Ex}) : {\bf SSet}^\Gamma \rightarrow {\bf SSet}^\Gamma$. It is easy to see that if $K$ is a $\Gamma$-simplicial sets, both of the above maps ${\rm Sd}(K) \rightarrow K$ and $K \rightarrow {\rm Ex}(K)$ are also $\Gamma$-weak equivalences.

\subsection{Bredon's theorem}

We will use the following proposition several times.

\begin{prop}[Bredon \cite{Bredon}] \label{Bredon}
Let $\Gamma$ be a finite group and $f: X \rightarrow Y$ a $\Gamma$-map between $\Gamma$-CW-complexes. Then $f$ is a $\Gamma$-homotopy equivalence if and only if $f^{\Gamma'} : X^{\Gamma'} \rightarrow Y^{\Gamma'}$ is a homotopy equivalence for every subgroup $\Gamma'$ of $\Gamma$.
\end{prop}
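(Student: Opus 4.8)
The plan is to prove the two implications separately; the ``only if'' direction is formal, and the ``if'' direction is the substance --- it is Bredon's equivariant Whitehead theorem. For the easy direction, suppose $f$ has a $\Gamma$-homotopy inverse $g\colon Y \to X$, with $\Gamma$-homotopies $gf \simeq {\rm id}_X$ and $fg \simeq {\rm id}_Y$. Since $[0,1]$ carries the trivial $\Gamma$-action, $(Z \times [0,1])^{\Gamma'} = Z^{\Gamma'} \times [0,1]$, so every $\Gamma$-homotopy restricts on $\Gamma'$-fixed points to an ordinary homotopy; restricting the two homotopies above yields $g^{\Gamma'} f^{\Gamma'} \simeq {\rm id}_{X^{\Gamma'}}$ and $f^{\Gamma'} g^{\Gamma'} \simeq {\rm id}_{Y^{\Gamma'}}$, so each $f^{\Gamma'}$ is a homotopy equivalence.

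For the converse, I would first factor $f$ through its $\Gamma$-mapping cylinder as $X \hookrightarrow M_f \to Y$, where the projection $M_f \to Y$ is a $\Gamma$-deformation retraction (so a $\Gamma$-homotopy equivalence) and the inclusion $j\colon X \hookrightarrow M_f$ is a $\Gamma$-cofibration; it then suffices to prove that $j$ is a $\Gamma$-homotopy equivalence. Since forming fixed points commutes with the mapping cylinder, $(M_f)^{\Gamma'} = M_{f^{\Gamma'}}$ and $j^{\Gamma'}$ is the inclusion $X^{\Gamma'} \hookrightarrow M_{f^{\Gamma'}}$, which is a homotopy equivalence because $f^{\Gamma'}$ is. Thus the problem reduces to the assertion: if $(Z,A)$ is a $\Gamma$-CW pair such that $A^{\Gamma'} \hookrightarrow Z^{\Gamma'}$ is a homotopy equivalence --- equivalently, since $Z^{\Gamma'}$ and $A^{\Gamma'}$ are CW-complexes, a weak equivalence --- for every subgroup $\Gamma'$, then $A$ is a strong $\Gamma$-deformation retract of $Z$.

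This assertion I would establish by equivariant obstruction theory, inducting over the relative equivariant skeleta of $(Z,A)$. A relative cell has the form $\Gamma/\Gamma' \times D^n$, attached along $\Gamma/\Gamma' \times S^{n-1}$, and the adjunction $\Map_\Gamma(\Gamma/\Gamma' \times B, W) \cong \Map(B, W^{\Gamma'})$ converts the inductive step --- extending the partial equivariant retraction together with its accompanying deformation $\Gamma$-homotopy over such a cell --- into an ordinary obstruction-theoretic extension problem in the fixed-point spaces. The obstruction to the step lies in the relative homotopy group $\pi_n(Z^{\Gamma'}, A^{\Gamma'})$, which vanishes because $A^{\Gamma'} \hookrightarrow Z^{\Gamma'}$ is a weak equivalence; the $\Gamma$-homotopy extension property of the $\Gamma$-cofibration $(Z,A)$ then lets one patch the cellwise constructions into a global strong $\Gamma$-deformation retraction of $Z$ onto $A$, as required. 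The main obstacle I anticipate is the bookkeeping of this induction: one must order the cells so that, at a cell of orbit type $\Gamma/\Gamma'$, the required information about the fixed-point maps is already in place, and one must treat the low-dimensional steps carefully (the bijection on $\pi_0$ and the compatibility of the $\pi_1$-actions on higher homotopy groups), where the hypothesis of being a weak equivalence on \emph{every} fixed-point set is exactly what forces the obstruction groups to vanish; this is carried out in detail in \cite{Bredon}.
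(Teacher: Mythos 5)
The paper does not prove this proposition; it is quoted verbatim from Bredon's book and used as a black box, so there is no internal argument to compare yours against. Your outline is the standard proof of the cited result (the equivariant Whitehead theorem): the formal ``only if'' direction via restriction of $\Gamma$-homotopies to fixed points, and the converse via the $\Gamma$-mapping cylinder reduction to compressing a $\Gamma$-CW pair $(Z,A)$ cell by cell, using the adjunction $\Map_\Gamma(\Gamma/\Gamma'\times B, W)\cong \Map(B, W^{\Gamma'})$ and the vanishing of $\pi_n(Z^{\Gamma'},A^{\Gamma'})$. This is correct in outline and is exactly the argument in the reference (the only small point left implicit is replacing $f$ by a cellular $\Gamma$-map so that $M_f$ is a $\Gamma$-CW complex containing $X$ as a subcomplex, which equivariant cellular approximation supplies).
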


\subsection{Some homotopy colimits}

The following proposition is sometimes called the gluing lemma or cube lemma (Lemma 8.8 of \cite{GJ} or Proposition 15.10.10 of \cite{Hirschhorn}).

\begin{prop}\label{Cube lemma}
Let $\mathcal{C}$ be a model category and let
$$\begin{CD}
A @<i<< B @>>> C\\
@V{f_A}VV @VV{f_B}V @VV{f_C}V\\
A' @<{i'}<< B' @>>> C'
\end{CD}$$
be a commutative diagram in $\mathcal{C}$. Suppose that the all vertical arrows are weak equivalences, and the all objects appearing in the above diagram are cofibrant. If $i$ and $i'$ are cofibrations in $\mathcal{C}$, then the natural map
$$A \cup_B C \rightarrow A' \cup_{B'} C'$$
is a weak equivalence.
\end{prop}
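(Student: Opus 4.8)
The plan is to reduce the statement to the standard factorization-and-pushout argument that underlies the gluing lemma in any model category. First I would use the fact that a pushout of a cofibration along any map is a cofibration, so both $A \cup_B C$ and $A' \cup_{B'} C'$ are cofibrant; the content is that the comparison map between them is a weak equivalence. Since everything in sight is cofibrant, I would work in the category of cofibrant objects and exploit that weak equivalences between cofibrant objects are preserved by pushout along cofibrations (this is Brown's lemma / the dual of the standard statement for fibrant objects). The strategy is to replace the two spans by a single span related to both via levelwise trivial cofibrations, so that the induced map on pushouts becomes manifestly a weak equivalence.

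Concretely, here are the steps I would carry out. Factor the map $i' \colon B' \leftarrow$ — more precisely, form the pushout $P = A' \cup_{B'} B$ along the cofibration $i'$ pulled back through $f_B \colon B \to B'$; wait, $f_B$ goes the wrong way, so instead I would first replace $f_B$ by a cofibration: factor $f_B = q \circ j$ with $j \colon B \to \tilde B$ a trivial cofibration and $q \colon \tilde B \to B'$ a trivial fibration (all objects cofibrant). Then push out $i$ and $i'$ appropriately along $j$ and $q$ to build an intermediate span $A'' \leftarrow \tilde B \to C''$ receiving maps from both original spans. At each stage I would invoke Proposition~\ref{Cube lemma}'s hypotheses piecemeal: a pushout of a trivial cofibration is a trivial cofibration, and a pushout of a weak equivalence between cofibrant objects along a cofibration is again a weak equivalence. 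Chaining these, the map $A \cup_B C \to A'' \cup_{\tilde B} C''$ and the map $A' \cup_{B'} C' \to A'' \cup_{\tilde B} C''$ are both weak equivalences, and since the original comparison map is compatible with these, the two-out-of-three property for weak equivalences finishes the proof.

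The main obstacle I anticipate is bookkeeping the cofibrancy and the directions of all the maps so that every pushout I form is along a cofibration and every object stays cofibrant — the gluing lemma is notorious for requiring careful attention to exactly these hypotheses, and a naive diagram chase collapses if one of the intermediate objects fails to be cofibrant or if one pushes out along a non-cofibration. A clean way to organize this is to note that the whole argument takes place in the category $\mathcal{C}_c$ of cofibrant objects, where the pushout-product-style stability of weak equivalences under gluing along cofibrations is exactly Reedy's or Brown's lemma; citing that structural fact lets me avoid redoing the factorization lemmas by hand. Since the excerpt already grants Proposition~\ref{Cube lemma} as a known result with a reference (Lemma~8.8 of \cite{GJ}, Proposition~15.10.10 of \cite{Hirschhorn}), in the actual paper I would simply cite it rather than reprove it; the sketch above is what one would write if a self-contained argument were demanded.
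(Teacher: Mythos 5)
The paper gives no proof of this proposition at all; it simply labels it the gluing/cube lemma and cites Lemma 8.8 of \cite{GJ} and Proposition 15.10.10 of \cite{Hirschhorn}, which is precisely what you conclude one should do. Your sketch of the underlying argument (factor $f_B$ into a trivial cofibration followed by a trivial fibration, use stability of weak equivalences between cofibrant objects under pushout along cofibrations, then two-out-of-three) is a reasonable outline of the standard proof in those references, so your treatment matches the paper's.
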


For an ordinal $\lambda$, the minimum of $\lambda$ is denoted by $0$.

\begin{prop} \label{prop seq}
Let $\mathcal{C}$ be a model category, $X_\bullet : \lambda \rightarrow \mathcal{C}$, $Y_\bullet : \lambda \rightarrow \mathcal{C}$ functors from an ordinal $\lambda$, and $f_\bullet : X_\bullet \rightarrow Y_\bullet$ a natural transformation. Suppose the following conditions are satisfied:
\begin{itemize}
\item[(1)] $f_\alpha : X_\alpha \rightarrow Y_\alpha$ is a weak equivalence for every $\alpha < \lambda$.
\item[(2)] For each $\alpha < \lambda$, the map
$${\rm colim}_{\beta < \alpha} X_\beta \longrightarrow X_\alpha$$
is a cofibration of $\mathcal{C}$. In particular, $X_0$ is cofibrant.
\end{itemize}
Then the colimit $f_{\lambda} : X_{\lambda} \rightarrow Y_{\lambda}$ of $f_{\bullet}$ is a weak equivalence.
\end{prop}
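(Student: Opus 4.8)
The plan is to reduce the claim about a transfinite composition to the case of a single pushout, to which we can apply the gluing lemma (Proposition \ref{Cube lemma}), and then to transport the conclusion through the transfinite tower by a standard induction on $\lambda$. First I would set up the induction: let $X_\lambda = \operatorname{colim}_{\alpha < \lambda} X_\alpha$ and $Y_\lambda = \operatorname{colim}_{\alpha < \lambda} Y_\alpha$, with $f_\lambda$ the induced map, and prove by transfinite induction on $\mu \le \lambda$ the statement $P(\mu)$: ``the map $f_\mu \colon X_\mu \to Y_\mu$ is a weak equivalence, $X_\mu$ is cofibrant, and the canonical map $X_0 \to X_\mu$ is a cofibration.'' The base case $\mu = 0$ is immediate from hypothesis (2) (which asserts $X_0$ is cofibrant) together with (1). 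For $\mu$ a successor, say $\mu = \alpha + 1$, the map $X_\alpha \to X_{\alpha+1}$ factors as $X_\alpha = \operatorname{colim}_{\beta < \alpha+1} X_\beta \to X_{\alpha+1}$, which is a cofibration by (2); composing with $X_0 \to X_\alpha$ (a cofibration by the inductive hypothesis) keeps $X_0 \to X_{\alpha+1}$ a cofibration, and $X_{\alpha+1}$ is cofibrant since cofibrations are preserved under composition and $\varnothing \to X_0$ is a cofibration; that $f_{\alpha+1}$ is a weak equivalence is just hypothesis (1).

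The essential content is the limit ordinal step. Suppose $\mu$ is a limit ordinal and $P(\beta)$ holds for all $\beta < \mu$. I would argue that $X_\bullet$ restricted to $\mu$ is a cofibrant diagram in the Reedy sense along the ordinal $\mu$: condition (2) says exactly that each latching map $\operatorname{colim}_{\beta<\alpha} X_\beta \to X_\alpha$ is a cofibration, so the colimit $X_\mu = \operatorname{colim}_{\beta<\mu} X_\beta$ is a (transfinite) composition of cofibrations, hence cofibrant, and $X_0 \to X_\mu$ is a cofibration. For the weak equivalence, the cleanest route is to invoke the fact (a consequence of the gluing lemma and the standard small-object/Reedy machinery, e.g.\ Proposition 15.10.12 of \cite{Hirschhorn}) that the colimit of a natural transformation between two $\lambda$-sequences is a weak equivalence provided every map in the sequences is objectwise a weak equivalence and both sequences are built from cofibrations with cofibrant initial object. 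Concretely, one proves $f_\mu$ is a weak equivalence by a secondary induction: factor the map $X_\beta \to X_{\beta+1}$ through the pushout expressing it, and at each successor stage apply Proposition \ref{Cube lemma} to the square with top row $X_0 \leftarrow \text{(previous colimit data)} \to X_{\beta+1}$ and analogous bottom row for $Y$, using that the vertical maps are weak equivalences by the inductive hypotheses; then pass to the colimit over the limit stages using that a filtered colimit of weak equivalences between such cofibrant towers is a weak equivalence.

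The main obstacle is precisely this last point: unlike finite pushouts, a transfinite colimit of weak equivalences need not be a weak equivalence in an arbitrary model category, and the gluing lemma as stated only handles a single pushout square. The honest fix is to observe that the statement we want is a known structural property of model categories — it is exactly the input needed to run Quillen's small object argument compatibly with weak equivalences — and to cite it (Proposition 15.10.12 of \cite{Hirschhorn}, or derive it from the left properness-free version of the gluing lemma by transfinite induction, carefully checking at limit ordinals that the comparison map between the two towers remains a weak equivalence because each $Y_\bullet$ tower is also cofibrant in the same sense). I would therefore structure the proof so that the successor steps are handled explicitly by Proposition \ref{Cube lemma} and the limit steps are handled by citing the transfinite version; the only real work is verifying the cofibrancy bookkeeping (conditions (2) for both $X_\bullet$ and, via the weak equivalences $f_\alpha$ together with $X_0$ cofibrant, a compatible cofibrant replacement picture for $Y_\bullet$) so that the cited result applies.
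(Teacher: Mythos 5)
The paper's own ``proof'' of this proposition is a single sentence: it is \emph{similar to Proposition 15.10.12 of \cite{Hirschhorn}}. Since that is exactly the result you end up citing, your bottom line agrees with the paper's. However, the explicit derivation you wrap around that citation has concrete problems. First, the transfinite induction on $\mu\le\lambda$ is essentially vacuous: for every $\mu<\lambda$ the map $f_\mu$ is a weak equivalence by hypothesis (1), so the only thing to be proved is the single assertion at $\mu=\lambda$; the successor cases give you no leverage. Second, Proposition \ref{Cube lemma} cannot be ``applied at each successor stage'' as you describe: the gluing lemma needs a pushout square, and nothing in the hypotheses exhibits $X_{\alpha+1}$ as a pushout of $X_\alpha$ along anything --- the map ${\rm colim}_{\beta<\alpha+1}X_\beta\to X_{\alpha+1}$ is merely a cofibration. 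The real content of Hirschhorn's 15.10.12 is not an iterated gluing lemma but the observation that the ordinal $\lambda$ is a direct Reedy category, that condition (2) says precisely that the latching maps of $X_\bullet$ are cofibrations (i.e.\ $X_\bullet$ is Reedy cofibrant), that ${\rm colim}$ is left Quillen against the Reedy model structure, and that left Quillen functors preserve weak equivalences between cofibrant objects (Ken Brown's lemma).

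Third, the issue you raise about $Y_\bullet$ is genuine but your fix is not: the Reedy argument needs $Y_\bullet$ to be cofibrant as well, and nothing in the stated hypotheses provides this; it does not follow ``via the weak equivalences $f_\alpha$ together with $X_0$ cofibrant,'' and a ``compatible cofibrant replacement picture'' would have to be constructed, not invoked. (This is arguably a gap in the proposition as stated rather than only in your proof; it is harmless in the paper's application, where every object of ${\bf SSet}^\Gamma$ is cofibrant and the relevant latching maps are monomorphisms, hence cofibrations.) If you restructure the write-up as: verify that condition (2) is Reedy cofibrancy of $X_\bullet$, add (or verify in context) the corresponding condition for $Y_\bullet$, and then quote Hirschhorn 15.10.12, you will have exactly the argument the paper intends.
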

\begin{proof}
The proof is similar to Proposition 15.10.12 of \cite{Hirschhorn}.
\end{proof}

\section{Simplicial methods}

Let $\Gamma$ be a finite group and $T$ a right $\Gamma$-graph. Let $\mathcal{P}$ denote the category of posets. As was mentioned in Section 1, the functor
$$\Hom_T : \mathcal{G} \longrightarrow \mathcal{P}^{\Gamma}, G \mapsto \Hom(T,G)$$
is neither a left nor right adjoint functor. So we use the singular complex functor $\Sing_T : \mathcal{G} \rightarrow {\bf SSet}^{\Gamma}$, which is reviewed in Section 3.1. This is a right adjoint functor (Proposition \ref{prop 4.2.3}) and let $A_T$ denote the left adjoint. Then we have an adjoint pair
$$(A_T \circ {\rm Sd}^k, {\rm Ex}^k \circ \Sing_T) : {\bf SSet}^\Gamma \longrightarrow \mathcal{G}$$
for $k > 0$. In Section 3.2, we characterize the condition that the unit
$${\rm Id} \longrightarrow {\rm Ex}^k \circ \Sing_T \circ A_T \circ {\rm Sd}^k$$
of the adjunction is a natural $\Gamma$-weak equivalences for sufficiently large $k$ (Theorem \ref{thm 3.4.1}). In this section we give the proof of Theorem \ref{thm 3.4.1} based on some structural results proved in Section 4. In Section 3.3, we construct a model structure on $\mathcal{G}$ which is Quillen equivalent to ${\bf SSet}^{\Z_2}$ (Theorem \ref{thm 5.1}).

\subsection{Singular complexes}

For a non-negative integer $n$, define the graph $\Sigma^n$ by $V(\Sigma^n) = [n]$ and $E(\Sigma^n) = V(\Sigma^n) \times V(\Sigma^n)$. For a pair of graphs $T$ and $G$, the {\it singular complex} (see \cite{Matsushita 2}) is the simplicial set ${\rm Sing}(T,G)$ whose $n$-simplices are the graph homomorphisms from $T \times \Sigma^n$ to $G$, i.e. $\Sing(T,G)_n = \mathcal{G} (T \times \Sigma^n, G)$. The face and degeneracy maps are defined in an obvious way. A $0$-simplex of $\Sing(T,G)$ is identified with a graph homomorphism from $T$ to $G$. The fundamental result of the singular complex is the following:

\begin{thm}[Matsushita \cite{Matsushita 2}] \label{thm sing_hom_eq}
There is a homotopy equivalence
$$\Phi : |\Sing(T,G)| \longrightarrow |\Hom(T,G)|,$$
which is natural with respect to both $T$ and $G$. Moreover, for a graph homomorphism $f: T \rightarrow G$, we have $\Phi(f) = f$.
\end{thm}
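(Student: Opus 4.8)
The plan is to construct the map $\Phi$ directly on the level of nerves/realizations by exhibiting, for each simplicial degree, a natural transformation relating the singular complex to the nerve of the Hom poset, and then to verify it is a homotopy equivalence by a standard cofinality/contractibility argument. First I would unwind the definitions: an $n$-simplex of $\Sing(T,G)$ is a graph homomorphism $\xi : T \times \Sigma^n \to G$, while an $n$-simplex of the nerve of $\Hom(T,G)$ is a chain $\eta_0 \leq \cdots \leq \eta_n$ of multi-homomorphisms $T \to G$. Given $\xi$, the natural candidate is to send it to the chain whose $i$-th term is the multi-homomorphism $\eta_i(v) = \{\, \xi(v,j) : 0 \leq j \leq i \,\}$; one checks that each $\eta_i$ really is a multi-homomorphism (using that $\Sigma^n$ is complete, so $(v,j)$ and $(w,j')$ are adjacent in $T\times\Sigma^n$ whenever $(v,w)\in E(T)$), and that $\eta_i \leq \eta_{i+1}$, so this defines a simplicial map $\Sing(T,G) \to \mathrm{Ner}\,\Hom(T,G)$, hence after realization a continuous map $\Phi$. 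Naturality in $G$ is immediate from the formula, and naturality in $T$ follows because precomposition with $T' \times \Sigma^n \to T \times \Sigma^n$ commutes with the union construction. The normalization statement $\Phi(f) = f$ for a graph homomorphism $f : T \to G$ is also immediate: the associated $0$-simplex has $\eta_0(v) = \{f(v)\}$, which is precisely the image of $f$ under the standard identification of graph homomorphisms with vertices of $\Hom(T,G)$.

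Next I would prove $\Phi$ is a homotopy equivalence. The cleanest route is to analyze $\Phi$ fiberwise over the poset $\Hom(T,G)$: regard $\mathrm{Ner}\,\Hom(T,G)$ as built from the poset and show that the homotopy fiber of $\Phi$ over each multi-homomorphism $\eta$ (or over each simplex) is contractible, then invoke Quillen's Theorem A (or its simplicial analogue). Concretely, for a fixed vertex $\eta \in \Hom(T,G)$, the relevant comma-type category consists of pairs $(\xi, \text{data})$ where $\xi : T \times \Sigma^m \to G$ lands in the subchain below $\eta$; the key observation is that such $\xi$ factor through $T \times \Sigma^m \to G$ with all $\xi(v,j) \in \eta(v)$, and the space of these is a product of simplices (one $|\eta(v)|$-fold join factor per vertex, roughly), hence contractible. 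Alternatively, and perhaps more in the spirit of the cited paper, I would set up an explicit simplicial homotopy inverse $\Psi : \mathrm{Ner}\,\Hom(T,G) \to \Sing(T,G)$ after a barycentric subdivision — sending a chain $\eta_0 \leq \cdots \leq \eta_n$ to a choice of graph homomorphism $T \times \Sigma^n \to G$ picking, at $(v,i)$, some vertex in $\eta_i(v)$ — and then build explicit simplicial homotopies $\Phi\Psi \simeq \mathrm{id}$ and $\Psi\Phi \simeq \mathrm{id}$ using the poset structure and the fact that the relevant subposets (sets of choices) are directed, hence have contractible nerve; the homotopies are produced by the standard "straight-line in the poset" prism construction.

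The main obstacle I expect is the verification that $\Phi$ is a weak equivalence rather than the construction of $\Phi$ itself: the map $\xi \mapsto (\eta_0 \leq \cdots \leq \eta_n)$ is visibly natural and simplicial, but showing it induces an isomorphism on homotopy is where one must do real work, because $\Sing(T,G)$ has genuinely more simplices than $\mathrm{Ner}\,\Hom(T,G)$ in each degree (a graph homomorphism out of $T \times \Sigma^n$ records ordered tuples of vertices, whereas a chain in the Hom poset only records nested unions). Controlling this requires either the Theorem A fiber computation — where the delicate point is identifying the homotopy fiber precisely and proving its contractibility uniformly in $\eta$ — or a careful bookkeeping of the explicit homotopies, where one must check the prism identities hold at the simplicial level. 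A secondary technical nuisance is keeping track of degeneracies so that the maps are genuinely simplicial (not merely semi-simplicial); this is routine but must be stated. Since the statement is quoted from \cite{Matsushita 2}, I would at this point simply cite that reference for the full verification, having indicated the shape of the argument.
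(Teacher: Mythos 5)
First, note that the paper itself does not prove this statement: it is imported verbatim from \cite{Matsushita 2}, so there is no in-paper argument to match, and your decision to defer the full verification to that reference is consistent with what the paper does. The problem is with the explicit construction you offer before deferring. The assignment $\xi \mapsto (\eta_0 \leq \cdots \leq \eta_n)$ with $\eta_i(v) = \{\xi(v,j) : 0 \leq j \leq i\}$ is \emph{not} a simplicial map into the nerve of $\Hom(T,G)$, contrary to your claim that it is ``visibly natural and simplicial.'' It commutes with the degeneracies and with the last face $d_n$, but not with $d_i$ for $i<n$: already for $n=1$, the face $d_0\xi = \xi(-,1)$ is sent to the vertex $v \mapsto \{\xi(v,1)\}$, whereas $d_0$ applied to the chain $(\eta_0 \leq \eta_1)$ is the vertex $\eta_1 : v \mapsto \{\xi(v,0),\xi(v,1)\}$. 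Since this formula is the entire content of the first half of your argument (and the input to your Theorem A step, which needs a map or functor to exist before fibers can be analyzed), this is a genuine gap, not the ``secondary technical nuisance'' about degeneracies you flag.

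The difficulty is structural: the only order-theoretically natural datum attached to $\xi : T\times\Sigma^n \to G$ is the single multi-homomorphism $\eta_\xi(v) = \xi(\{v\}\times[n])$, and since $\eta_{\alpha^*\xi} \leq \eta_\xi$ for any $\alpha : [m]\to[n]$, the assignment $\xi \mapsto \eta_\xi$ is a functor from the category of simplices of $\Sing(T,G)$ to the poset $\Hom(T,G)$ --- that is, a map defined on a subdivision-type model of $|\Sing(T,G)|$, not on $\Sing(T,G)$ itself. So the barycentric subdivision you invoke only for the inverse $\Psi$ is already needed to define $\Phi$. The second half of your sketch is salvageable and close in spirit to the standard argument: the subcomplexes $\Sing_\eta = \{\xi : \xi(v,j)\in\eta(v) \text{ for all } v,j\}$ are contractible, any choice function subordinate to a chain $\eta_0\leq\cdots\leq\eta_n$ really is a graph homomorphism (your verification of this is correct), and a nerve-lemma or homotopy colimit comparison over the poset $\Hom(T,G)$ finishes the job. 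But as written the proposal asserts the existence of a simplicial map that the given formula does not produce, so the construction must be redone on the subdivision before the rest can proceed.
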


\begin{cor} \label{cor 4.2.2}
Let $\Gamma$ be a finite group and $T$ a right $\Gamma$-graph. Then the map
$$\Phi : |\Sing(T,G)| \longrightarrow |\Hom(T,G)|$$
in Theorem \ref{thm sing_hom_eq} is a $\Gamma$-homotopy equivalence.
\end{cor}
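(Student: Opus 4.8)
The plan is to deduce Corollary \ref{cor 4.2.2} from Theorem \ref{thm sing_hom_eq} together with Bredon's theorem (Proposition \ref{Bredon}). Since $\Phi : |\Sing(T,G)| \to |\Hom(T,G)|$ is already known to be a homotopy equivalence, the only thing to add is that it respects the $\Gamma$-actions and that this upgrades the homotopy equivalence to a $\Gamma$-homotopy equivalence. First I would record that the $\Gamma$-action on both sides is induced by the right $\Gamma$-action on $T$: for $\gamma \in \Gamma$, the graph homomorphism $\alpha_\gamma : T \to T$, $v \mapsto v\gamma$, induces $\alpha_\gamma^*$ on $\Hom(T,G)$ and, similarly, a simplicial self-map of $\Sing(T,G)$ (precomposition with $\alpha_\gamma \times \mathrm{id}_{\Sigma^n}$ on each set $\mathcal{G}(T \times \Sigma^n, G)$ of $n$-simplices). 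The naturality of $\Phi$ with respect to $T$ asserted in Theorem \ref{thm sing_hom_eq} applied to the morphisms $\alpha_\gamma$ gives commutative squares $\Phi \circ |\alpha_\gamma^*| = |\alpha_\gamma^*| \circ \Phi$, so $\Phi$ is a $\Gamma$-map.

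Next I would observe that both $|\Sing(T,G)|$ and $|\Hom(T,G)|$ are $\Gamma$-CW-complexes: $\Hom(T,G)$ is a poset, hence its order complex is a simplicial complex on which $\Gamma$ acts simplicially, giving a $\Gamma$-CW structure; likewise $\Sing(T,G)$ is a simplicial set with a simplicial $\Gamma$-action, so its geometric realization is a $\Gamma$-CW-complex. Thus Proposition \ref{Bredon} applies, and it suffices to check that $\Phi^{\Gamma'} : |\Sing(T,G)|^{\Gamma'} \to |\Hom(T,G)|^{\Gamma'}$ is an (ordinary) homotopy equivalence for every subgroup $\Gamma' \leq \Gamma$.

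For the fixed-point statement I would identify the $\Gamma'$-fixed subspaces with singular and Hom complexes of the quotient graph $T/\Gamma'$. A graph homomorphism $T \times \Sigma^n \to G$ is fixed by $\Gamma'$ precisely when it factors through the quotient $(T/\Gamma') \times \Sigma^n$, so $(\Sing(T,G))^{\Gamma'} \cong \Sing(T/\Gamma', G)$ as simplicial sets, and similarly $(\Hom(T,G))^{\Gamma'} \cong \Hom(T/\Gamma', G)$ as posets (a multi-homomorphism $T \to G$ is $\Gamma'$-invariant iff it is constant on $\Gamma'$-orbits, i.e. descends to $T/\Gamma'$). Under these identifications, the naturality of $\Phi$ in $T$ with respect to the projection $T \to T/\Gamma'$ shows that $\Phi^{\Gamma'}$ is carried to the map $\Phi$ for the pair $(T/\Gamma', G)$, which is a homotopy equivalence by Theorem \ref{thm sing_hom_eq}. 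Hence each $\Phi^{\Gamma'}$ is a homotopy equivalence, and Bredon's theorem finishes the proof.

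The main obstacle is the bookkeeping around fixed points and geometric realization: one must be careful that taking $\Gamma'$-fixed points commutes with geometric realization for these particular $\Gamma$-simplicial sets (which holds because $\Gamma$ is finite and the realization of a simplicial set with a finite group action is a $\Gamma$-CW-complex whose fixed points are the realization of the fixed sub-simplicial-set), and that the isomorphisms $(\Sing(T,G))^{\Gamma'} \cong \Sing(T/\Gamma',G)$ and $(\Hom(T,G))^{\Gamma'} \cong \Hom(T/\Gamma',G)$ are compatible with $\Phi$ in the way claimed; all of this is routine given the naturality built into Theorem \ref{thm sing_hom_eq}, but it is the part that needs explicit verification.
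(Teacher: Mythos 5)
Your proposal is correct and follows essentially the same route as the paper: equivariance of $\Phi$ from naturality in $T$, reduction to fixed points via Bredon's theorem, and identification of the $\Gamma'$-fixed subcomplexes with $\Sing(T/\Gamma',G)$ and $\Hom(T/\Gamma',G)$ via the projection $T \to T/\Gamma'$, so that Theorem \ref{thm sing_hom_eq} applied to $T/\Gamma'$ finishes the argument. The paper handles the commutation of fixed points with geometric realization by noting that realization preserves equalizers, which is the same point you flag as the bookkeeping obstacle.
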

\begin{proof}
The naturality with respect to $T$ implies that $\Phi : |\Sing(T,G)| \rightarrow |\Hom(T,G)|$ is $\Gamma$-equivariant. It suffices to show that, for every subgroup $\Gamma'$ of $\Gamma$, the map
$$\Phi^{\Gamma'} : |\Sing(T,G)|^{\Gamma'} \longrightarrow |\Hom(T,G)|^{\Gamma'}$$
is a homotopy equivalence (see Proposition \ref{Bredon}).

Since the geometric realization preserves equalizers, we have
$$|\Sing(T,G)|^{\Gamma'} \cong |\Sing(T,G)^{\Gamma'}|, \; |\Hom(T,G)|^{\Gamma'} \cong |\Hom(T,G)^{\Gamma'}|.$$
Let $p : T \rightarrow T/ \Gamma'$ be the quotient map. Clearly, the maps
$$p^* : \Sing(T/\Gamma', G) \longrightarrow \Sing(T,G),$$
$$p^* : \Hom(T/ \Gamma' ,G) \longrightarrow \Hom(T,G)$$
are monomorphisms and their images coincide with $\Sing(T,G)^{\Gamma'}$ and $\Hom(T,G)^{\Gamma'}$, respectively. Since $\Phi : |\Sing(T/\Gamma', G)| \rightarrow |\Hom(T/\Gamma', G)|$ is a homotopy equivalence (Theorem \ref{thm sing_hom_eq}), this completes the proof.
\end{proof}

\begin{prop} \label{prop 4.2.3}
Let $\Gamma$ be a group and $T$ a right $\Gamma$-graph. Then the functor
$${\rm Sing}_T : \mathcal{G} \longrightarrow {\bf SSet}^{\Gamma}, \; G \longmapsto \Sing(T,G)$$
has a left adjoint.
\end{prop}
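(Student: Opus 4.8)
The plan is to recognize $\Sing_T$ as a nerve functor and obtain $A_T$ as the associated realization, i.e.\ as a left Kan extension along the Yoneda embedding. The only genuinely geometric input will be that $\mathcal{G}$ is cocomplete and that colimits in $\mathcal{G}$ interact correctly with the edge relation; everything else is formal.

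First I would record that $\mathcal{G}$ is cocomplete. The forgetful functor $V:\mathcal{G}\to{\bf Set}$, $G\mapsto V(G)$, has a right adjoint, sending a set $S$ to the graph $(S,S\times S)$, and a left adjoint, sending $S$ to the discrete graph $(S,\emptyset)$; hence $V$ preserves and reflects both limits and colimits. Concretely, a coproduct of graphs is the disjoint union of vertex sets together with the disjoint union of edge sets, and the coequalizer of $f,g:G\rightrightarrows H$ has vertex set $V(H)$ modulo the equivalence relation generated by $f(v)\sim g(v)$, with edge set the image of $E(H)$ (automatically symmetric). Every small colimit is assembled from these; for a small diagram in $\mathcal{G}$, the vertex set of its colimit is the colimit of the vertex sets, while its edge set is the image, in the square of that colimit vertex set, of the disjoint union of the edge sets of the terms.

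Next I would reformulate $\Sing_T$. Writing $B\Gamma$ for the one-object groupoid with morphism group $\Gamma$ and setting $\mathcal{E}=\D\times B\Gamma$, the category ${\bf SSet}^{\Gamma}$ is isomorphic to the presheaf category ${\rm Fun}(\mathcal{E}^{op},{\bf Set})$: the simplicial operators supply the $\D^{op}$-variable and the $\Gamma$-action, via inversion, the $B\Gamma^{op}$-variable. Define $D:\mathcal{E}\to\mathcal{G}$ by $D([n],*)=T\times\Sigma^n$, letting a simplicial operator $\theta:[m]\to[n]$ act by ${\rm id}_T\times\Sigma^\theta$ and $\gamma\in\Gamma$ act by $\alpha_{\gamma^{-1}}\times{\rm id}$ (the inverse is forced by functoriality, since $\gamma\mapsto\alpha_\gamma$ is an anti-homomorphism). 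Since $\Sing(T,G)_n=\mathcal{G}(T\times\Sigma^n,G)$ with the $\Gamma$-action $\gamma\eta=\eta\circ(\alpha_\gamma\times{\rm id})$, there is a natural isomorphism of presheaves $\Sing_T(G)\cong\mathcal{G}(D(-),G)$; that is, $\Sing_T$ is the nerve functor attached to $D$. Now I would invoke the standard realization--nerve adjunction: for any small category $\mathcal{E}$, any cocomplete category $\mathcal{M}$ and any $D:\mathcal{E}\to\mathcal{M}$, the functor $m\mapsto\mathcal{M}(D(-),m)$ has a left adjoint, namely the left Kan extension of $D$ along Yoneda, computed as the colimit of $D$ over the category of elements, equivalently the coend $\int^{([n],*)\in\mathcal{E}}K_n\cdot(T\times\Sigma^n)$. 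Taking $\mathcal{M}=\mathcal{G}$, cocomplete by the previous step, this yields the left adjoint $A_T$ of $\Sing_T$. Unwinding, $A_T(K)$ is the quotient graph of $\coprod_{n\ge 0}\coprod_{x\in K_n}(T\times\Sigma^n)$ obtained by gluing, for each $\theta:[m]\to[n]$ and $x\in K_n$, the summand indexed by $\theta^*x$ onto the summand indexed by $x$ along ${\rm id}_T\times\Sigma^\theta$, and, for each $\gamma\in\Gamma$, the summand indexed by $\gamma x$ onto the summand indexed by $x$ along $\alpha_\gamma\times{\rm id}$.

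The main thing to get right is the cocompleteness step and its compatibility with edges: a graph homomorphism out of a colimit graph must be \emph{exactly} a cocone of graph homomorphisms, the edge condition included. This is guaranteed by the description above (equivalently, by $V$ preserving colimits), since a map $A_T(K)\to G$ is a vertex map carrying the images of the various $E(T\times\Sigma^n)$ into $E(G)$, which is precisely a compatible family of graph homomorphisms $T\times\Sigma^n\to G$; the rest — that the colimit-of-elements and coend descriptions of the left adjoint agree and that the adjunction bijection is natural in $K$ and in $G$ — is routine diagram chasing. (If one prefers to avoid the explicit construction, one can instead note that $\mathcal{G}$ is locally finitely presentable and that $\Sing_T$ preserves all limits — each $\mathcal{G}(T\times\Sigma^n,-)$ does, and limits in ${\bf SSet}^{\Gamma}$ are formed levelwise with the action — whence a left adjoint exists by the adjoint functor theorem; but the explicit colimit formula is what gets used later in the paper.)
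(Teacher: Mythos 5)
Your argument is essentially the same as the paper's (due to Tounai): identify $\Sing_T$ with the nerve functor $\mathcal{G}(D(-),-)$ for the functor $D : \Delta\times B\Gamma \to \mathcal{G}$, $([n],*)\mapsto T\times\Sigma^n$, and obtain $A_T$ as the left Kan extension of $D$ along the Yoneda embedding. You simply make explicit two points the paper leaves implicit — the cocompleteness of $\mathcal{G}$ and the variance bookkeeping for the right action — and your coend description of $A_T(K)$ agrees with the quotient-graph formula $\Gamma\backslash(T\times A(K))$ given after the proposition.
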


I am pleased to mention that Shouta Tounai provides me the following sophisticated proof.

\vspace{2mm} \noindent {\it Proof of Proposition \ref{prop 4.2.3}.} Let $\Gamma$ be a group and consider $\Gamma$ as a small category in the usual way. Namely, the object set is the one point set $\{ *\}$, the set of endomorphisms of $*$ is $\Gamma$, and the composition is the multiplication of $\Gamma$.

A right $\Gamma$-graph is identified with a functor from $\Gamma^{\rm op}$ to $\mathcal{G}$. Thus we have a functor from $\Gamma^{\rm op} \times \Delta \rightarrow \mathcal{G}$, $[n] \mapsto T \times \Sigma^n$. The associated functor
$$\mathcal{G}(T \times \Sigma^{\bullet}, -) : \mathcal{G} \longrightarrow {\bf Set}_{\Gamma^{\rm op} \times \Delta} ={\bf Set}^{\Gamma \times \Delta^{\rm op}} \cong {\bf SSet}^{\Gamma}$$
coincides with $\Sing_T$. Thus the left Kan extension of $T \times \Sigma^\bullet$ along the Yoneda functor $\Gamma^{\rm op} \times \Delta \rightarrow {\bf Set}$ is the left adjoint of $\Sing_T$.
\qed

\vspace{2mm}
Let $T$ be a right $\Gamma$-graph. Let $A_T$ denote the left adjoint of the functor $\Sing_T : \mathcal{G} \longrightarrow {\bf SSet}^{\Gamma}$. We shall precisely describe the graph $A_T(K)$ for a $\Gamma$-simplicial set $K$. First we construct a $\Gamma$-graph $A(K)$. The vertex set of $A(K)$ is the set $K_0$ of $0$-simplices, and two $0$-simplices $v$ and $w$ of $K$ are adjacent in $A(K)$ if and only if there is a 1-simplex connecting them. The group $\Gamma$ acts on $T \times A(K)$ by $\gamma (x,v) = (x \gamma^{-1}, \gamma v)$. Then the graph $A_T(K)$ is the quotient graph $\Gamma \backslash (T \times A(K))$.

In the case of simplicial complexes, the following similar construction $\A_T(\K)$ was known: We first construct a $\Gamma$-graph $\A(\K)$ for a $\Gamma$-simplicial complex $\K$. The vertex set of $\A(\K)$ is the vertex set $V(\K)$ of $\K$. Two vertices $v$ and $w$ of $\K$ are adjacent in $\A(\K)$ if and only if the set $\{ v,w\}$ is a simplex of $\K$. Then the graph $\A_T(\K)$ is the quotient $\A_T(\K) = \Gamma \backslash (T \times \A(\K))$. This construction was first considered by Csorba \cite{Csorba} in case $T$ is $K_2$, and was later generalized in Dochtermann and Schultz \cite{DS}.

Let $K$ be a simplicial set whose cell structure is isomorphic to some simplicial complex $\K$. Then it is clear that the graphs $A_T(K)$ and $\A_T(\K)$ are isomorphic.

\subsection{Unit of the adjoint pair}

{\it Throughout this section, $T$ is a finite connected right $\Gamma$-graph with at least one edge.} In this section, we characterize the condition that the unit
\begin{eqnarray} \label{eqn 4.4.1}
{\rm Id} \longrightarrow {\rm Ex}^k \circ \Sing_T \circ A_T \circ {\rm Sd}^k
\end{eqnarray}
is a natural $\Gamma$-weak equivalence for sufficiently large $k$ (Theorem \ref{thm 3.4.1}).

First we consider the following two conditions concerning a finite right $\Gamma$-graph $T$. Here we consider a $\Gamma$-set $X$ as the simplicial set whose 0-simplices are the elements of $X$ and which has no other non-degenerate simplices. Recall that for an element $\gamma$ of $\Gamma$, the graph homomorphism $\alpha_\gamma : T \rightarrow T$ is defined by $x \mapsto x \gamma$.

\begin{itemize}
\item[(A)] For each subgroup $\Gamma'$ of $\Gamma$, the map
$$\Gamma / \Gamma'  \longrightarrow \Sing(T,T / \Gamma'), \; \gamma \Gamma' \longmapsto p \circ \alpha_\gamma$$
is a $\Gamma$-weak equivalence.
\item[(B)] The map
$$\Gamma \longrightarrow \Sing(T,T), \; \gamma \mapsto \alpha_\gamma$$
is a $\Gamma$-weak equivalence.
\end{itemize}
The condition (A) implies the condition (B). One can show that the map
$$\Gamma / \Gamma' \longrightarrow \Sing(T, T /\Gamma') = \Sing_T \circ A_T (\Gamma /\Gamma')$$
in the condition (A) is the unit map of $(A_T, \Sing_T)$. Thus if the unit (\ref{eqn 4.4.1}) is a natural $\Gamma$-weak equivalence for every $\Gamma$-simplicial set $K$, then the condition (A) holds (see also (4) of Section 2.3). On the other hand, the following result asserts that the converse also holds:

\begin{thm} \label{thm 3.4.1}
Let $\Gamma$ be a finite group and $T$ a finite connected right $\Gamma$-graph having at least one edge and diameter $r$. Let $k$ be a positive integer such that $2^{k-2} > r$. If $T$ satisfies the condition (A), then the unit map
$$u_K : K \longrightarrow {\rm Ex}^k \circ \Sing_T \circ A_T \circ {\rm Sd}^k (K)$$
is a $\Gamma$-weak equivalence for every $\Gamma$-simplicial set $K$.
\end{thm}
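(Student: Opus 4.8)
The plan is to reduce the statement to a statement about cells of $\Gamma$-simplicial sets and then induct up the skeleta. Since $\Gamma$-weak equivalences are detected on the generating cofibrations $\mathcal{I}_\Gamma$, and since every $\Gamma$-simplicial set is built as a transfinite composition of pushouts along maps $(\Gamma/\Gamma') \times \partial\Del[n] \hookrightarrow (\Gamma/\Gamma') \times \Del[n]$, it suffices — using Proposition \ref{Cube lemma} and Proposition \ref{prop seq} — to show that the functor $\mathsf{F} = {\rm Ex}^k \circ \Sing_T \circ A_T \circ {\rm Sd}^k$ preserves these pushouts up to $\Gamma$-weak equivalence and that the unit $u_K$ is a $\Gamma$-weak equivalence on the generators themselves. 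The homotopy-colimit bookkeeping requires knowing that $A_T \circ {\rm Sd}^k$ (hence $\mathsf{F}$, after the final ${\rm Ex}^k$, which is a right adjoint but preserves $\Gamma$-weak equivalences) sends $\mathcal{I}_\Gamma$ to $\Gamma$-cofibrations; this is where the subdivision is essential, since after subdividing the boundary inclusion becomes ``nice enough'' that $A_T$ turns it into an inclusion of induced subgraphs, and one checks such inclusions give cofibrant objects and cofibrations on the simplicial-set side after applying $\Sing_T$ and ${\rm Ex}^k$.

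The base case is the heart of the argument: one must show $u_K$ is a $\Gamma$-weak equivalence when $K = (\Gamma/\Gamma') \times \Del[n]$ (and, compatibly, when $K = (\Gamma/\Gamma') \times \partial\Del[n]$, which reduces to lower-dimensional simplices by the inductive hypothesis). Since $\Del[n]$ is $\Gamma$-weakly contractible and ${\rm Sd}^k \Del[n]$ is a cone — so that $A_T \circ {\rm Sd}^k(\Del[n])$ is, up to dismantlability (Corollary \ref{cor 2.2.2}), homotopy equivalent to $A_T$ of a point — the case $n \geq 1$ collapses to the case $K = \Gamma/\Gamma'$. For $K = \Gamma/\Gamma'$ one computes directly: $A_T(\Gamma/\Gamma') = \Gamma \backslash (T \times A(\Gamma/\Gamma'))$, and since $A(\Gamma/\Gamma')$ is the discrete graph on $\Gamma/\Gamma'$ (no edges between distinct cosets), $T \times A(\Gamma/\Gamma')$ is a disjoint union of copies of $T$ indexed by $\Gamma/\Gamma'$, whence $A_T(\Gamma/\Gamma') \cong T/\Gamma'$. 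After applying ${\rm Ex}^k \circ \Sing_T$, the unit map becomes precisely the map $\Gamma/\Gamma' \to {\rm Ex}^k \Sing(T, T/\Gamma')$ classifying the homomorphisms $p \circ \alpha_\gamma$, and since ${\rm Ex}^k$ is a natural $\Gamma$-weak equivalence, this is a $\Gamma$-weak equivalence exactly by hypothesis (A). (One should be slightly careful that ${\rm Sd}^k(\Gamma/\Gamma')$ is again $\Gamma/\Gamma'$, as ${\rm Sd}$ fixes discrete simplicial sets.)

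The main obstacle I expect is the interplay between ${\rm Sd}^k$ and $A_T$ in the \emph{inductive step} for $\partial\Del[n] \hookrightarrow \Del[n]$ — specifically, verifying that applying $A_T \circ {\rm Sd}^k$ to this cofibration yields a diagram to which the cube lemma applies, i.e.\ that the relevant graphs are cofibrant (have no obstruction after $\Sing_T$) and that the map $A_T {\rm Sd}^k(\partial\Del[n]) \to A_T {\rm Sd}^k(\Del[n])$ becomes a cofibration after $\Sing_T$. This is exactly where the hypothesis $2^{k-2} > r$ enters: after $k$ subdivisions the simplicial set is ``fine enough'' relative to the diameter $r$ of $T$ that the combinatorics of graph homomorphisms out of $T \times \Sigma^n$ into the relevant quotient graphs become controllable (this is the content of the structural results of Section 4, which I would invoke as a black box — presumably Lemma \ref{lem 3.4.7} and Proposition \ref{prop 3.4.8}). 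A secondary technical point is checking that $\Sing_T$ and ${\rm Ex}^k$ behave well with respect to the fixed-point subcomplexes $(-)^{\Gamma'}$, so that $\Gamma$-weak equivalences can be verified subgroup-by-subgroup via Bredon's theorem (Proposition \ref{Bredon}); the proof of Corollary \ref{cor 4.2.2} already shows $\Sing_T(G)^{\Gamma'} = \Sing(T/\Gamma', G)$, which is the identity one needs to reduce condition (A) for all subgroups to a genuine input.
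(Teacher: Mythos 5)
Your proposal follows essentially the same route as the paper: transfinite induction over the $\mathcal{I}_\Gamma$-cell structure of $K$ (via Proposition \ref{Cube lemma} and Proposition \ref{prop seq}), with the base cases handled by identifying the unit on $\Gamma/\Gamma'$ with the map of condition (A), collapsing $(\Gamma/\Gamma')\times\Delta[n]$ onto $\Gamma/\Gamma'$, and deferring the gluing step to Lemma \ref{lem 3.4.7} and Proposition \ref{prop 3.4.8}, exactly as the paper does. The only framing quibble is that the genuine difficulty in the inductive step is not cofibrancy (in ${\bf SSet}^\Gamma$ all objects are cofibrant and monomorphisms are cofibrations) but the failure of $\Sing_T \circ A_T$ to preserve pushouts, which is what the $r$-NDR condition and the bound $2^{k-2} > r$ repair in Theorem \ref{thm 4.3.5}; since you invoke Proposition \ref{prop 3.4.8} as a black box, this does not affect correctness.
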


If we restrict our attention to free $\Gamma$-simplicial sets, the following holds:

\begin{thm} \label{thm 3.4.2}
Let $\Gamma$ be a finite group and $T$ a finite connected right $\Gamma$-graph having at least one edge and diameter $r$. Let $k$ be an integer with $2^{k-2} > r$. If $T$ satisfies the condition (B), then the unit map
$$u_K : K \longrightarrow {\rm Ex}^k \circ \Sing_T \circ A_T \circ {\rm Sd}^k(K)$$
is a $\Gamma$-weak equivalence for every free $\Gamma$-simplicial set $K$.
\end{thm}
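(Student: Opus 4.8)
The plan is to reduce this statement to Theorem \ref{thm 3.4.1} by a change-of-groups argument, exploiting the fact that for a free $\Gamma$-simplicial set $K$ only the trivial isotropy subgroup is relevant. First, I would observe that condition (B) is the special case $\Gamma' = \{1\}$ of the family of maps appearing in condition (A): indeed for the trivial subgroup $\Gamma/\{1\} = \Gamma$ and the map $\Gamma \to \Sing(T,T)$, $\gamma \mapsto \alpha_\gamma$ is exactly the map in (B). So (B) supplies the ``$\Gamma' = \{1\}$ instance'' of the hypothesis of Theorem \ref{thm 3.4.1}, and the task is to show this single instance suffices when we restrict to free $K$.

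The key structural point is that the unit map $u_K$ is natural in $K$ and commutes with colimits in an appropriate sense, so it is enough to verify that $u_K$ is a $\Gamma$-weak equivalence on the generators of the free $\Gamma$-simplicial sets, namely on $K = \Gamma \times \D[n]$ (and the attaching maps along $\Gamma \times \partial\D[n]$), and then bootstrap via Proposition \ref{Cube lemma} and Proposition \ref{prop seq} through the skeletal filtration of an arbitrary free $\Gamma$-simplicial set. For such a cell $\Gamma \times \D[n]$, computing $A_T \circ \Sd^k(\Gamma \times \D[n])$ gives a graph of the form $T \times A(\Sd^k \D[n])$ with $\Gamma$ acting diagonally and \emph{freely} (via its free action on the first factor of $\Gamma \times \D[n]$, which after passing through $A$ becomes a free $\Gamma$-action on the graph, since $\Gamma$ acts freely on vertices); hence the quotient $A_T(\Gamma \times \Sd^k\D[n]) = \Gamma \backslash (T \times A(\Sd^k\D[n]))$ has the $\Gamma$-action encoded by a free $\Gamma$-covering, and $\Sing_T$ of it splits off a factor of $\Gamma$. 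Concretely, I would show $\Sing(T, A_T(\Gamma \times \Sd^k\D[n])) \cong \Gamma \times \Sing(T, T \times A(\Sd^k\D[n])) / (\text{something contractible})$, or more cleanly that its $\{1\}$-fixed points (which is all of it, and the only fixed-point set we must check since the action is free) compute the correct homotopy type — and here is precisely where condition (B), i.e.\ that $\gamma \mapsto \alpha_\gamma$ exhibits $\Gamma$ as weakly equivalent to $\Sing(T,T)$, gets used to identify $\Sing(T,T)$ with the discrete set $\Gamma$ up to weak equivalence, feeding into the same diameter-bound combinatorial argument (the $2^{k-2} > r$ hypothesis) that underlies Theorem \ref{thm 3.4.1}.

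Alternatively — and this may be the cleaner route — I would try to deduce Theorem \ref{thm 3.4.2} formally from Theorem \ref{thm 3.4.1} applied to a subgroup: given a free $\Gamma$-simplicial set $K$, it is ``induced'' in the sense that the only orbit types are free ones, so checking $u_K^{\Gamma'}$ for $\Gamma' \neq \{1\}$ is vacuous ($K^{\Gamma'} = \emptyset$, and one must check the target also has empty $\Gamma'$-fixed points, which follows because $A_T \circ \Sd^k$ of a free $\Gamma$-simplicial set is a free $\Gamma$-graph and $\Sing_T$ and $\Ex^k$ preserve freeness). Thus only the $\{1\}$-fixed-point check survives, i.e.\ only the non-equivariant statement $|K| \to |\Ex^k \Sing_T A_T \Sd^k (K)|$ is a weak equivalence — and that reduces to the non-equivariant content of Theorem \ref{thm 3.4.1}, which needs only the $\Gamma' = \{1\}$ hypothesis, namely (B).

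The main obstacle I anticipate is the bookkeeping around fixed points of the target: one must prove carefully that $A_T$ takes free $\Gamma$-simplicial sets to free $\Gamma$-graphs (the diagonal action on $T \times A(K)$ descends correctly), and that $\Sing_T$ and Kan's $\Ex$ preserve the property of having empty higher fixed-point sets. This is where the precise description of $A_T(K) = \Gamma \backslash (T \times A(K))$ from Section 3.1 must be used, together with the observation that if $\Gamma$ acts freely on $V(A(K)) = K_0$ then it acts freely on $V(T \times A(K))$ and the quotient map $T \times A(K) \to A_T(K)$ is a $\Gamma$-principal covering of graphs; everything else then reduces to the already-established Theorem \ref{thm 3.4.1}.
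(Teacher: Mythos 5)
Your first route is essentially the paper's proof: the paper notes that a free $\Gamma$-simplicial set is precisely an $\mathcal{I}'_\Gamma$-cell complex for $\mathcal{I}'_\Gamma=\{\Gamma\times\partial\Delta[n]\hookrightarrow\Gamma\times\Delta[n]\}$, and then reruns the transfinite induction of Theorem \ref{thm 3.4.1}, using only the trivial-subgroup instances of Lemma \ref{lem 3.4.7} and Corollary \ref{cor 3.4.9}, for which condition (A) is needed only in its $\Gamma'=1$ form, i.e.\ condition (B). One caveat: the gluing step is not just Proposition \ref{Cube lemma} plus naturality. The cube lemma controls the pushout of the targets, but one must still identify $\hat{S}_T\hat{A}_T(K\cup_L L')$ with the pushout of $\hat{S}_T\hat{A}_T(K)\leftarrow\hat{S}_T\hat{A}_T(L)\rightarrow\hat{S}_T\hat{A}_T(L')$; this is Proposition \ref{prop 3.4.8}, it is where the hypothesis $2^{k-2}>r$ actually enters (via the $r$-NDR machinery and Theorem \ref{thm 4.3.5}), and it is the hard point of both theorems. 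Your proposal locates the role of the diameter bound in the wrong place and otherwise treats this step as formal colimit-preservation, which it is not: $|\Sing_T A_T(L)|$ need not be a deformation retract of $|\Sing_T A_T(K)|$ even when $|L|$ is one of $|K|$.

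Your ``alternative'' route does not work as stated. Theorem \ref{thm 3.4.1} assumes condition (A), so there is no quotable ``non-equivariant content'' of it available under (B) alone; extracting the part that needs only (B) means redoing the cell induction, which is just the first route again. The fixed-point bookkeeping is also misconceived: $A_T(K)=\Gamma\backslash(T\times A(K))$ is a plain graph, not a $\Gamma$-graph, so ``$A_T$ of a free $\Gamma$-simplicial set is a free $\Gamma$-graph'' and ``$\Sing_T$ preserves freeness'' are not meaningful. The $\Gamma$-action on $\Sing_T(G)$ comes from the right action on $T$, and $(\Sing_T(G))^{\Gamma'}\cong\Sing(T/\Gamma',G)$; its emptiness for $\Gamma'\neq 1$ is not a formal consequence of the freeness of $K$, but is supplied by condition (B) (which forces $\Sing(T/\Gamma',T)=\emptyset$) and is then propagated to general free $K$ along the $\times$-homotopy equivalences of Lemma \ref{lem 3.4.7} and through the gluing and colimit steps.
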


Before giving the proofs, we consider when the right $\Gamma$-graph $T$ satisfies the above conditions. In fact the condition (A) is a quite strong requirement. I think that the following are the only examples naturally arising.

\begin{itemize}
\item[(1)] $\Gamma$ is trivial and $T$ is the graph ${\bf 1}$ consisting of one looped vertex.
\item[(2)] $\Gamma$ is $\Z_2$ and $T$ is $K_2$ with the $\Z_2$-action which flips the edge of $K_2$.
\end{itemize}
Of course, if $T$ is $\times$-homotopy equivalent to ${\bf 1}$ or $\Z_2$-$\times$-homotopy equivalent to $K_2$, then $T$ satisfies the condition (A). Note that $\Hom({\bf 1}, G)$ is the face poset of the clique complex of the maximal reflexive subgraph of $G$, i.e. the induced subgraph of $G$ whose vertices are looped vertices of $G$. On the other hand, the case (2) corresponds to the box complex $B(G) = \Hom(K_2,G)$.

On the other hand, there are several graphs such that the condition (B) is satisfied. In case the graph $T$ is stiff (see Section 2.1), the condition (B) has the following combinatorial characterization. Here ${\rm Aut}(T)$ denotes the automorphism group of $T$.

\begin{lem} \label{lem 3.4.3}
Suppose that $T$ is stiff. Then $T$ satisfies the condition (B) if and only if the homomorphism $\alpha : \Gamma \rightarrow {\rm Aut}(T)$, $\gamma \mapsto \alpha_\gamma$ is an isomorphism and every endomorphism of $T$ is an automorphism.
\end{lem}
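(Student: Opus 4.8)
The plan is to translate the condition (B) into statements about the combinatorics of $\Hom(T,T)$, using that $\Sing(T,T)$ and $\Hom(T,T)$ are $\Gamma$-homotopy equivalent (Corollary \ref{cor 4.2.2}) so that (B) is equivalent to asserting that $\Gamma \to \Hom(T,T)$, $\gamma \mapsto \alpha_\gamma$, is a $\Gamma$-homotopy equivalence. Since $\Gamma$ is a discrete $\Gamma$-space, this forces $\Hom(T,T)$ to be $\Gamma$-homotopy equivalent to a discrete space; in particular each component of $\Hom(T,T)$ must be $\Gamma'$-contractible or empty on fixed points for every subgroup $\Gamma'$, and the whole $\Gamma$-set of components is isomorphic to $\Gamma$ (with its left-translation action) via $\gamma \mapsto [\alpha_\gamma]$. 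I would first record this reformulation precisely using Proposition \ref{Bredon}.

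Next I would use stiffness. By Lemma \ref{lem stiff}, every automorphism of $T$ is an isolated point of $\Hom(T,T)$, hence forms a singleton connected component. The key combinatorial input for the forward direction is then: if (B) holds, then (i) the map $\Gamma \to \Hom(T,T)$ being a $\Gamma$-homotopy equivalence, together with its image consisting of automorphisms, shows that $\alpha\colon\Gamma\to\operatorname{Aut}(T)$ is injective (distinct $\alpha_\gamma$ lie in distinct components, as they are isolated points and the map is at least a bijection on $\pi_0$) and surjective onto the set of components of $\Hom(T,T)$; and (ii) every endomorphism $f\colon T\to T$ lies in some connected component of $\Hom(T,T)$, which by the $\pi_0$-bijection must be the component of some $\alpha_\gamma$, so $f \simeq_\times \alpha_\gamma$. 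Here I would invoke that a $\times$-homotopy between a stiff graph's endomorphism and an automorphism forces the endomorphism to be that automorphism — this uses that automorphisms are isolated, so the only vertex in that component is $\alpha_\gamma$ itself, whence $f=\alpha_\gamma\in\operatorname{Aut}(T)$. Finally, since every endomorphism of $T$ is an automorphism, $\operatorname{Aut}(T)=\operatorname{End}(T)$ is in bijection (as a $\Gamma$-set of components) with $\Gamma$, and checking equivariance of $\alpha$ gives that $\alpha$ is a group isomorphism.

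For the converse, suppose $\alpha\colon\Gamma\to\operatorname{Aut}(T)$ is an isomorphism and every endomorphism of $T$ is an automorphism. Then the vertex set of $\Hom(T,T)$ is exactly $\operatorname{End}(T)=\operatorname{Aut}(T)$, and by Lemma \ref{lem stiff} each such vertex is isolated, so $\Hom(T,T)$ is the discrete space on $\operatorname{Aut}(T)$. The map $\Gamma\to\Hom(T,T)$, $\gamma\mapsto\alpha_\gamma$, is then a bijection of discrete spaces, and I would check it is $\Gamma$-equivariant for the left-translation action on $\Gamma$ and the action $\gamma\cdot\eta=\alpha_\gamma^*(\eta)$ on $\Hom(T,T)$ — concretely $\alpha_{\gamma}^*(\alpha_{\delta}) = \alpha_\delta\circ\alpha_\gamma = \alpha_{\gamma\delta}$, matching left translation up to the usual right/left bookkeeping, which I would reconcile carefully. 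Being an equivariant bijection of discrete $\Gamma$-CW-complexes it is a $\Gamma$-homeomorphism, hence a $\Gamma$-homotopy equivalence, and transporting along Corollary \ref{cor 4.2.2} yields (B).

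The main obstacle I anticipate is the bookkeeping in the forward direction around $\pi_0$: one must argue that the $\Gamma$-homotopy equivalence $\Gamma \to \Hom(T,T)$ not only induces a bijection on components but that \emph{every} component of $\Hom(T,T)$ is hit, and that components containing a non-automorphism endomorphism cannot occur — this is where stiffness (Lemma \ref{lem stiff}, forcing automorphisms to be isolated, hence to exhaust their components) must be combined with the statement that $\times$-homotopic maps out of a connected graph into a stiff target that reach an automorphism are equal to it. I would also take care with the left/right action conventions throughout, since $T$ carries a right $\Gamma$-action and $\Hom(T,T)$ a left one.
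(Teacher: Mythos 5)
Your proposal is correct and follows essentially the same route as the paper: identify $\Sing(T,T)\simeq\Hom(T,T)$, note the unit factors through ${\rm Aut}(T)$, and use Lemma \ref{lem stiff} to see that automorphisms are isolated so that $\Hom(T,T)$ collapses to the discrete set ${\rm Aut}(T)$. The paper only writes out the ``if'' direction and omits the converse as similar, whereas you spell out both; the extra $\pi_0$-bookkeeping you supply for the forward direction is exactly the intended argument.
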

\begin{proof}
Recall that the composition of
\begin{eqnarray}\label{eqn 4.15.1}
\Gamma \longrightarrow {\rm Aut}(T) \longrightarrow {\rm Sing}(T,T)
\end{eqnarray}
coincides with the unit map. Suppose that the homomorphism $\Gamma \rightarrow {\rm Aut}(T)$ is an isomorphism and every endomorphism of $T$ is an automorphism. Lemma \ref{lem stiff} implies ${\rm Aut}(T) \cong \Hom(T,T)$ and hence the unit $\Gamma \rightarrow \Sing(T,T) \simeq \Hom(T,T)$ is a $\Gamma$-weak equivalence. The proof of the converse is similar and is omitted.
\end{proof}

\begin{eg}\label{eg 22}
There are several examples which satisfy the condition (B):

\begin{itemize}
\item[(1)] $\Gamma$ is the symmetric group $S_n$ of the $n$-element set $\{ 1,\cdots, n\}$, and $T$ is the complete graph $K_n$ for $n \geq 2$.
\item[(2)] $\Gamma$ is the dihedral group $D_{2n+1}$ of order $(4n+2)$, and $T$ is the odd cycle $C_{2n+1}$ with length ${2n+1}$ for $n \geq 1$.
\item[(3)] A subset $\sigma$ of the cyclic group $\Z_n$ of order $n$ is {\it stable} if $x \in \sigma$ implies $x+1 \not\in \sigma$. The {\it stable Kneser graph $SG_{n,k}$} is the graph whose vertex set consists of the stable $k$-subsets of $\Z_n$ and two stable $k$-subsets are adjacent if and only if they are disjoint. The stable Kneser graphs were introduced by Schrijver \cite{Schrijver}, and he showed that stable Kneser graphs are vertex critical, i.e. every subgraph $G$ of $SG_{n,k}$ such that $V(G) \subsetneq V(SG_{n,k})$ has a chromatic number smaller than $\chi(SG_{n,k})$. It is easy to see that the vertex critical finite graph satisfies the condition of Lemma \ref{lem 3.4.3}. Braun \cite{Braun} showed that the group of automorphisms of $SG_{n,k}$ is isomorphic to the dihedral group $D_n$ of order $2n$.
\end{itemize}
\end{eg}

Csorba \cite{Csorba} showed that for every $\Z_2$-CW-complex $X$, there is a simple graph $G$ such that $\Hom(K_2,G)$ and $X$ are $\Z_2$-homotopy equivalent. Theorem \ref{thm 3.4.1} implies that the free assumption is redundant:

\begin{cor} \label{cor 3.4.4}
For every $\Z_2$-CW-complex $X$, there is a graph $G$ such that $X$ and $\Hom(K_2,G)$ are $\Z_2$-homotopy equivalent.
\end{cor}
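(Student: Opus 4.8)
The plan is to reduce everything to Theorem \ref{thm 3.4.1} applied to the case $\Gamma = \Z_2$ and $T = K_2$ with the flipping involution. As noted in the text following Theorem \ref{thm 3.4.2}, this pair $(\Z_2, K_2)$ satisfies the condition (A); $K_2$ is connected, has an edge, and has diameter $1$, so any $k$ with $2^{k-2} > 1$ — e.g.\ $k = 3$ — is admissible. Fix such a $k$ once and for all. First I would pass from the given $\Z_2$-CW-complex $X$ to a $\Z_2$-simplicial set: by the equivariant simplicial approximation / triangulation of $\Z_2$-CW-complexes, there is a $\Z_2$-simplicial set $K$ with $|K|$ $\Z_2$-homotopy equivalent to $X$ (equivalently, choose a $\Z_2$-CW structure and take the nerve of a suitable $\Z_2$-poset, or simply observe that $\Sing$ of a $\Z_2$-space is a $\Z_2$-simplicial set and is $\Z_2$-weakly equivalent to it; any such model works).

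Next I would feed $K$ into the adjunction. Set $G = A_{K_2} \circ \Sd^k(K)$, a graph. By Theorem \ref{thm 3.4.1} with $\Gamma = \Z_2$, $T = K_2$, the unit
\begin{equation*}
u_K : K \longrightarrow \mathrm{Ex}^k \circ \Sing_{K_2} \circ A_{K_2} \circ \Sd^k(K) = \mathrm{Ex}^k \circ \Sing_{K_2}(G)
\end{equation*}
is a $\Z_2$-weak equivalence. Since $K \to \mathrm{Ex}^k(K')$ is always a $\Z_2$-weak equivalence for any $\Gamma$-simplicial set $K'$ (Section 2.2), and since $\mathrm{Ex}$ preserves $\Z_2$-weak equivalences, it follows that the canonical map $\Sing_{K_2}(G) \to \mathrm{Ex}^k\Sing_{K_2}(G)$ is a $\Z_2$-weak equivalence, hence $K$ and $\Sing_{K_2}(G) = \Sing(K_2,G)$ are $\Z_2$-weakly equivalent. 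Realizing, $|K|$ and $|\Sing(K_2,G)|$ are $\Z_2$-weakly equivalent between $\Z_2$-CW-complexes, hence $\Z_2$-homotopy equivalent by Proposition \ref{Bredon}.

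Finally, Corollary \ref{cor 4.2.2} (with $\Gamma = \Z_2$, $T = K_2$) gives a $\Z_2$-homotopy equivalence $|\Sing(K_2,G)| \to |\Hom(K_2,G)|$, and the text records the identification of $\Hom(K_2,G)$ with the box complex $B(G)$ as $\Z_2$-posets. Chaining the $\Z_2$-homotopy equivalences
\begin{equation*}
X \;\simeq_{\Z_2}\; |K| \;\simeq_{\Z_2}\; |\Sing(K_2,G)| \;\simeq_{\Z_2}\; |\Hom(K_2,G)|
\end{equation*}
produces the desired graph $G$. The only genuine content beyond bookkeeping is the invocation of Theorem \ref{thm 3.4.1}; the main obstacle in the \emph{write-up} is making precise the first reduction — producing a $\Z_2$-simplicial set $K$ with $|K| \simeq_{\Z_2} X$ — but this is standard equivariant simplicial-set technology (e.g.\ $\Sing$ of $X$, which is a $\Z_2$-simplicial set $\Z_2$-weakly equivalent to $X$ since $X^{\Z_2}$ is a CW-complex and $\Sing(X)^{\Z_2} = \Sing(X^{\Z_2})$), and need not detain us. Csorba's theorem required $X$ to be free precisely because his construction went through $\A_{K_2}$ on simplicial complexes without the $\mathrm{Ex}^k \circ \Sing$ rectification; Theorem \ref{thm 3.4.1} removes that restriction, which is the point of the corollary.
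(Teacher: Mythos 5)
Your proposal is correct and is exactly the argument the paper intends (the paper simply asserts that Corollary \ref{cor 3.4.4} follows from Theorem \ref{thm 3.4.1}, leaving the bookkeeping implicit): triangulate or apply $\Sing$ to $X$, feed the resulting $\Z_2$-simplicial set into the unit of $(A_{K_2}\circ{\rm Sd}^k,{\rm Ex}^k\circ\Sing_{K_2})$, and convert $\Z_2$-weak equivalences between $\Z_2$-CW realizations into $\Z_2$-homotopy equivalences via Proposition \ref{Bredon} and Corollary \ref{cor 4.2.2}. No gaps.
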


Let $S_n$ be the symmetric group of the $n$-element set $\{ 1,\cdots, n\}$. Then $S_n$ acts on $K_n$ in an obvious way. Dochtermann and Schultz \cite{DS} proved that for every free $S_n$-CW-complex $X$, there exists a graph $G$ such that $\Hom(K_n,G)$ and $X$ are $S_n$-homotopy equivalent \cite{DS}. The following is a generalization of \cite{DS}.

\begin{cor} \label{cor 3.4.5}
Suppose that a finite connected right $\Gamma$-graph $T$ with at least one edge satisfies the condition (B). Then for every free $\Gamma$-complex $X$, there is a graph $G$ such that $\Hom(T,G)$ and $X$ are $\Gamma$-homotopy equivalent.
\end{cor}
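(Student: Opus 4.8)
The plan is to combine Theorem \ref{thm 3.4.2} with a cofibrant-replacement argument in the equivariant model structure on $\mathbf{SSet}^\Gamma$. First I would choose a $\Gamma$-simplicial set $K$ modelling $X$: since $X$ is a free $\Gamma$-CW-complex, its $\Gamma$-CW structure gives a free $\Gamma$-simplicial set $K$ (built only out of cells of the form $(\Gamma/1)\times\D[n]$) together with a $\Gamma$-homotopy equivalence $|K|\simeq_\Gamma X$; equivalently one may take a suitable subdivision of the $\Gamma$-triangulation of $X$. Because $K$ is free, so is ${\rm Sd}^k(K)$ for every $k$ (barycentric subdivision preserves freeness of the $\Gamma$-action), and the natural map ${\rm Sd}^k(K)\to K$ is a $\Gamma$-weak equivalence by the remarks at the end of Section 2.2.

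Next, fix $k$ with $2^{k-2}>r$, where $r$ is the diameter of $T$, and set $G = A_T\circ{\rm Sd}^k(K)$. By Theorem \ref{thm 3.4.2}, the unit map
$$u_K : K \longrightarrow {\rm Ex}^k\circ\Sing_T\circ A_T\circ{\rm Sd}^k(K) = {\rm Ex}^k\circ\Sing_T(G)$$
is a $\Gamma$-weak equivalence, since $K$ is a free $\Gamma$-simplicial set and $T$ satisfies condition (B). Combining this with the natural $\Gamma$-weak equivalence ${\rm Id}\to{\rm Ex}^k$ (iterated Kan extension, again $\Gamma$-equivariantly by Section 2.2), we obtain a chain of $\Gamma$-weak equivalences between $K$ and $\Sing_T(G) = \Sing(T,G)$. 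Passing to geometric realizations and invoking Corollary \ref{cor 4.2.2}, which gives a $\Gamma$-homotopy equivalence $|\Sing(T,G)|\simeq_\Gamma|\Hom(T,G)|$, we conclude that $X\simeq_\Gamma|K|$ and $|\Hom(T,G)|$ are connected by a zig-zag of $\Gamma$-weak equivalences between $\Gamma$-CW-complexes. By Bredon's theorem (Proposition \ref{Bredon}), each such $\Gamma$-weak equivalence is a $\Gamma$-homotopy equivalence, so $X$ and $\Hom(T,G)$ are $\Gamma$-homotopy equivalent.

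The main obstacle is not any single deep step — Theorem \ref{thm 3.4.2} does the real work — but rather the bookkeeping needed to ensure that every comparison map in the zig-zag is genuinely $\Gamma$-equivariant and lands between $\Gamma$-CW-complexes, so that Bredon's theorem upgrades the $\Gamma$-weak equivalences to honest $\Gamma$-homotopy equivalences. In particular one must check that the realization $|\Hom(T,G)|$ (a poset, hence a simplicial complex with simplicial $\Gamma$-action) is a $\Gamma$-CW-complex, and that ${\rm Ex}^k$ and ${\rm Sd}^k$ preserve the relevant equivariant structure; both are routine given the cited facts. One should also note at the outset that $G = A_T\circ{\rm Sd}^k(K)$ is a bona fide graph in $\mathcal{G}$, which is immediate from the construction of $A_T$ recalled in Section 3.1.
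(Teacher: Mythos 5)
Your proposal is correct and is exactly the deduction the paper intends (the corollary is stated without proof immediately after Theorem \ref{thm 3.4.2}): model $X$ by a free $\Gamma$-simplicial set $K$, set $G=A_T\circ{\rm Sd}^k(K)$, apply Theorem \ref{thm 3.4.2} to get a zig-zag of $\Gamma$-weak equivalences through ${\rm Ex}^k\circ\Sing_T(G)$, pass to $|\Hom(T,G)|$ via Corollary \ref{cor 4.2.2}, and upgrade with Bredon's theorem. The points you flag as routine (freeness of ${\rm Sd}^k(K)$, $\Gamma$-CW structures on the realizations) are indeed routine, so no gap remains.
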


Here we need the assumption that the group action is free. In fact, $\Hom(K_n,G)$ is free if and only if $G$ has no looped vertices. On the other hand, if $G$ has looped vertices, then $\Hom(K_n,G)$ has fixed points.

The converse of Corollary \ref{cor 3.4.5} is false. In fact Dochtermann \cite{Dochtermann 2} showed that for every finite connected graph $T$ with at least one edge and for every CW-complex $X$, there is a graph $G$ such that $\Hom(T,G)$ and $X$ are homotopy equivalent.

Now we turn to the proofs of Theorem \ref{thm 3.4.1} and Theorem \ref{thm 3.4.2}. Let $\Del^n$ to indicate the simplicial complex $([n], 2^{[n]})$. For $n > 0$ and $0 \leq r \leq n$, define $\Lam_r^n$ to be the simplicial complex whose vertex set is $[n]$ and whose simplex is a subset $\sigma$ with $\sigma \cup \{ r\} \neq [n]$. Clearly, we have
$$A_T(\D [n]) = \A_T(\Del^n), \; A_T(\Lambda_r[n]) = \A_T(\Lam_r^n).$$

Let $k$ be an integer such that $2^{k-2}$ is greater than the diameter of $T$. In the rest of this section, we write $\hat{A}_T$ instead of $A_T \circ {\rm Sd}^k$ and $\hat{S}_T$ instead of ${\rm Ex}^k \circ {\rm Sing}_T$. For a subgroup $\Gamma'$ of $\Gamma$ and a simplicial set $K$, we write $K_{\Gamma'}$ instead of $(\Gamma / \Gamma') \times K$.

Now we turn to the proof of Theorem \ref{thm 3.4.1}. The proof is based on the following two assertions which will be proved in Section 4.

\begin{lem} \label{lem 3.4.7}
Let $\Gamma'$ be a subgroup of the finite group $\Gamma$, and suppose that $T$ satisfies the condition (A). Then the following hold:
\begin{itemize}
\item[(1)] The inclusion $\hat{A}_T(\Gamma / \Gamma') \hookrightarrow \hat{A}_T((\Gamma / \Gamma') \times \Lambda_r[n])$ induced by the inclusion $\Gamma /\Gamma' \hookrightarrow (\Gamma / \Gamma') \times \Lambda_r[n]$, $\gamma \mapsto (\gamma, r )$ is a $\times$-homotopy equivalence.
\item[(2)] The inclusion $\hat{A}_T(\Gamma / \Gamma') \hookrightarrow \hat{A}_T((\Gamma / \Gamma') \times \Delta [n])$ is a $\times$-homotopy equivalence.
\item[(3)] The unit map $\Lambda_r[n]_{\Gamma'} \longrightarrow \hat{S}_T \circ \hat{A}_T (\Lambda_r[n])_{\Gamma'}$ is a $\Gamma$-weak equivalence.
\item[(4)] The unit map $\Delta [n]_{\Gamma'} \longrightarrow \hat{S}_T \circ \hat{A}_T (\D[n]_{\Gamma'})$ is a $\Gamma$-weak equivalence.
\end{itemize}
\end{lem}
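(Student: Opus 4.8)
The plan is to reduce everything to a single geometric statement: that the condition (A) forces $\hat{A}_T$ to send the standard contractible building blocks to contractible graphs, equivariantly. I would proceed in the order (1), (2), (3), (4), since the later parts are built on the earlier ones. For part (2), I would exploit the fact that $\Sd^k(\Delta[n])$ is (the simplicial set of) an iterated barycentric subdivision of the simplex, so $\hat{A}_T(\Delta[n]) = \A_T(\Sd^k \Del^n)$; since $\Sd^k\Del^n$ has a cone point (the iterated subdivision of a cone is a cone on the subdivided base, and in fact the original vertex $0$ of $\Del^n$ survives as a vertex dominating a large dismantlable region once $2^{k-2}$ exceeds the diameter of $T$), I expect to peel off vertices of $\A_T(\Sd^k\Del^n)$ by Corollary \ref{cor 2.2.2} one at a time until only the image of $\hat{A}_T(\Gamma/\Gamma')$ remains, equivariantly; the diameter hypothesis $2^{k-2} > r$ is exactly what guarantees that after applying $T \times (-)$ and quotienting by $\Gamma$ the relevant neighborhoods still nest. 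Part (1) is the same argument but with the horn $\Lam_r^n$ in place of $\Del^n$: the vertex $r$ plays the role of the cone point, and since every proper face not containing $r$ together with $r$ still fails to be all of $[n]$, the subdivided horn remains dismantlable onto the star of (the subdivision of) $r$.

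For parts (3) and (4), I would combine the $\times$-homotopy equivalences from (1) and (2) with the homotopy invariance of the singular complex. Concretely, from (2) the inclusion $\hat{A}_T(\Gamma/\Gamma') \hookrightarrow \hat{A}_T(\Delta[n]_{\Gamma'})$ is a $\times$-homotopy equivalence, so by Lemma \ref{lem 2.2.1} (applied with the right $\Gamma$-graph $T$) and Corollary \ref{cor 4.2.2} the induced map $\Sing(T, \hat{A}_T(\Gamma/\Gamma')) \to \Sing(T, \hat{A}_T(\Delta[n]_{\Gamma'}))$ is a $\Gamma$-homotopy equivalence, hence a $\Gamma$-weak equivalence after applying $\Ex^k$ (which preserves $\Gamma$-weak equivalences, Section 2.2). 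Now the unit map for $\Delta[n]_{\Gamma'}$ fits into a commuting triangle with the unit map for $\Gamma/\Gamma'$ and this last equivalence; but the unit map $\Gamma/\Gamma' \to \hat{S}_T\hat{A}_T(\Gamma/\Gamma')$ is — up to the natural $\Gamma$-weak equivalences $\Sd^k \simeq \mathrm{id}$ and $\mathrm{id}\simeq\Ex^k$ — precisely the map $\Gamma/\Gamma' \to \Sing(T, T/\Gamma')$ of condition (A), which is assumed to be a $\Gamma$-weak equivalence. By two-out-of-three the unit map for $\Delta[n]_{\Gamma'}$ is a $\Gamma$-weak equivalence, proving (4); part (3) is identical using (1) in place of (2) and noting $\Lambda_r[n]_{\Gamma'}$ deformation retracts (as a simplicial set) onto the vertex $(\Gamma/\Gamma')\times\{r\}$, so the domain of its unit map is $\Gamma$-weakly equivalent to $\Gamma/\Gamma'$ compatibly with everything.

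The main obstacle I anticipate is the bookkeeping in parts (1) and (2): establishing that $\A_T(\Sd^k\Del^n)$ (and its horn analogue) is $\Gamma$-equivariantly dismantlable down to the subgraph $\hat A_T(\Gamma/\Gamma')$. One has to identify the vertices of $\A_T(\Sd^k\Del^n) = \Gamma\backslash(T\times\A(\Sd^k\Del^n))$, understand when two such vertices are adjacent in terms of adjacency in $T$ and in the subdivided simplicial complex, and then produce an explicit ordering of the vertices to be collapsed so that each is dominated by a neighbor at the moment of its removal — and crucially this ordering must be $\Gamma$-invariant so that Corollary \ref{cor 2.2.2} can be applied $\Gamma$-orbit by $\Gamma$-orbit (or one passes to the quotient graph $T/\Gamma'$ and argues there, using the identification of $\Sing(T,-)^{\Gamma'}$ with $\Sing(T/\Gamma',-)$ from the proof of Corollary \ref{cor 4.2.2}). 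The hypothesis $2^{k-2}>r$ is what makes the domination relations hold; verifying this inequality is exactly strong enough will be the delicate combinatorial core, and is presumably where the bound on $k$ in Theorems \ref{thm 3.4.1} and \ref{thm 3.4.2} comes from. I would isolate this as a separate lemma about $\times$-homotopy equivalences of the form $\A_T(\Gamma/\Gamma') \hookrightarrow \A_T(\Gamma/\Gamma' \times L)$ for $L$ a subdivided simplex or horn, and defer its proof to Section 4 as the excerpt indicates.
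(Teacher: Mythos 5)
Your treatment of parts (3) and (4) is essentially the paper's: you form the square whose top edge is the $\Gamma$-weak equivalence $\Gamma/\Gamma'\to(\Gamma/\Gamma')\times\Lambda_r[n]$ (resp.\ $\times\Delta[n]$), whose left edge is the unit for $\Gamma/\Gamma'$ (which, since $\mathrm{Sd}^k$ is the identity on a discrete simplicial set and $A_T(\Gamma/\Gamma')=T/\Gamma'$, is exactly the map of condition (A) followed by the weak equivalence $\mathrm{id}\to\mathrm{Ex}^k$), and whose bottom edge is a $\Gamma$-weak equivalence by Lemma~\ref{lem 2.2.1}, Corollary~\ref{cor 4.2.2} and part (1) (resp.\ (2)); two-out-of-three finishes it. That part is fine.

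The gap is in parts (1) and (2), which you defer as a ``delicate combinatorial core'' to be done by an explicit, $\Gamma$-invariant vertex-by-vertex dismantling of $\A_T(\mathrm{Sd}^k((\Gamma/\Gamma')\times\mathsf{\Lambda}^n_r))$, with the bound $2^{k-2}>r$ as the key input making the domination relations hold. This misidentifies both the mechanism and the role of the hypothesis. The diameter bound plays no role in (1)--(2); it is consumed entirely by Proposition~\ref{prop 3.4.8}, via Corollary~\ref{cor 4.3.4} and Theorem~\ref{thm 4.3.5}, where one needs graph homomorphisms out of $T\times\Sigma^n$ to factor through an $r$-neighborhood. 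Parts (1)--(2) instead follow almost formally from the strong-collapse machinery: every vertex of $(\Gamma/\Gamma')\times\mathsf{\Lambda}^n_r$ outside $(\Gamma/\Gamma')\times\{r\}$ is dominated by the corresponding copy of $r$, so the complex strongly $\Gamma$-collapses onto $\Gamma/\Gamma'$; strong $\Gamma$-collapses are preserved by barycentric subdivision (Corollary~\ref{cor 2.4.5}), hence $\mathrm{Sd}^k((\Gamma/\Gamma')\times\mathsf{\Lambda}^n_r)$ strongly $\Gamma$-collapses onto $\mathrm{Sd}^k(\Gamma/\Gamma')$; and Proposition~\ref{prop 4.3.1} transports a strong $\Gamma$-collapse of complexes to a $\times$-homotopy deformation retract after applying $\A_T$, by pushing a multi-map $\eta\in\mathrm{Def}_\Gamma(\K,\L)$ through $T\times(-)$ and the quotient $Q$. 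Your plan of removing one dismantlable vertex at a time from the quotient graph $\Gamma\backslash(T\times\A(\Sd^k\K))$ and invoking Corollary~\ref{cor 2.2.2} orbit by orbit is not obviously executable (adjacency in that quotient is governed by $T$ as much as by $\K$, and a vertex of the subdivided complex need not give a dismantlable vertex of the quotient graph in any given order), and even if it were, you would be re-proving Proposition~\ref{prop 4.3.1} in the hardest possible way. You should route (1) and (2) through the strong-collapse results of Section 4 rather than through graph dismantling, and drop the diameter hypothesis from these two parts.
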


For a $\Gamma$-simplicial set $K$, we write $u_K$ to indicate the unit $K \rightarrow \hat{S}_T \circ \hat{A}_T (K)$.

\begin{prop}\label{prop 3.4.8}
Let $(K,L)$ be a pair of $\Gamma$-simplicial sets, $f : L \rightarrow L'$ a $\Gamma$-simplicial map, and let $K' = K \cup_L L'$. If the unit maps $u_L, u_K, u_{L'}$ are $\Gamma$-weak equivalences, then the unit map $u_{K'}$ is a $\Gamma$-weak equivalence.
\end{prop}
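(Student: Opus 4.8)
The plan is to realize $K' = K \cup_L L'$ via a pushout square to which a homotopy-invariance principle (the gluing lemma, Proposition \ref{Cube lemma}) can be applied, after replacing the honest pushout by a homotopy pushout and checking that the four functors $\hat{A}_T$, $\hat{S}_T$, $\mathrm{Sd}^k$, $\mathrm{Ex}^k$ interact suitably with it. The key point is that $\hat{A}_T = A_T \circ \mathrm{Sd}^k$ is a left adjoint, hence preserves colimits, so $\hat{A}_T(K') = \hat{A}_T(K) \cup_{\hat{A}_T(L)} \hat{A}_T(L')$; the delicate part is that pushouts in $\mathcal{G}$ and weak equivalences in $\mathcal{G}$ are not yet related by a model structure, so I cannot directly invoke a gluing lemma in $\mathcal{G}$ — I will instead push everything back to ${\bf SSet}^\Gamma$ through $\hat{S}_T = \mathrm{Ex}^k \circ \mathrm{Sing}_T$.

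Concretely, first I would reduce to the case where $f : L \to L'$ is a cofibration of $\Gamma$-simplicial sets: by the factorization axiom in ${\bf SSet}^\Gamma$ write $f = j \circ s$ with $s : L \to \tilde L'$ a trivial cofibration and $j : \tilde L' \to L'$ a trivial fibration (or, more simply, replace $L'$ up to $\Gamma$-weak equivalence by the mapping cylinder of $f$, which receives $L$ by a cofibration); then $u_{\tilde L'}$ is a $\Gamma$-weak equivalence because $u_{L'}$ is and the squares commute, and forming the pushout along a cofibration versus along a general map agree up to weak equivalence since $K' = K \cup_L L'$ computes the homotopy pushout once one of the legs $L \to K$ or $L \to L'$ is a cofibration — and by symmetry I may also first replace $L \hookrightarrow K$ by a cofibration. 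Having arranged that $L \hookrightarrow K$ and $L \hookrightarrow \tilde L'$ are cofibrations with all objects cofibrant (every $\Gamma$-simplicial set is cofibrant), Proposition \ref{Cube lemma} applied to the cube with top face $K \leftarrow L \to \tilde L'$, bottom face $\hat{S}_T\hat{A}_T(K) \leftarrow \hat{S}_T\hat{A}_T(L) \to \hat{S}_T\hat{A}_T(\tilde L')$, and vertical arrows $u_K, u_L, u_{\tilde L'}$ — provided the bottom legs are cofibrations and the bottom objects cofibrant, the latter again automatic — gives that $K' \to \hat{S}_T\hat{A}_T(K) \cup_{\hat{S}_T\hat{A}_T(L)} \hat{S}_T\hat{A}_T(\tilde L')$ is a $\Gamma$-weak equivalence. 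It remains to identify this target with $\hat{S}_T\hat{A}_T(K')$.

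The main obstacle is precisely this last identification: $\hat{S}_T = \mathrm{Ex}^k \circ \mathrm{Sing}_T$ is a \emph{right} adjoint composed with $\mathrm{Ex}^k$, so a priori it does not carry the pushout $\hat{A}_T(K) \cup_{\hat{A}_T(L)} \hat{A}_T(\tilde L')$ (which equals $\hat{A}_T(K')$ since $\hat{A}_T$ preserves colimits) to the pushout of the images. What saves the argument is that $\hat{A}_T$ of a cofibration between the relevant cofibrant objects is, by Lemma \ref{lem 3.4.7}, a nice inclusion of graphs (built by attaching cells of the form $\hat{A}_T(\Gamma/\Gamma' \times \partial\Delta[n]) \hookrightarrow \hat{A}_T(\Gamma/\Gamma' \times \Delta[n])$), so one can run an induction over the skeletal filtration of the cofibration $L \hookrightarrow K$ using Proposition \ref{prop seq}: at each stage one glues a single cell, reducing to the case $(K,L) = (\Gamma/\Gamma' \times \Delta[n],\, \Gamma/\Gamma' \times \partial\Delta[n])$, where $u_K$ is a $\Gamma$-weak equivalence by Lemma \ref{lem 3.4.7}(4), $u_L$ follows from the lower-dimensional cases by the same pushout induction, and the pushout $\hat{S}_T\hat{A}_T$ of the cell attachment is controlled directly because $\hat{A}_T$ applied to the attaching cell is a cofibration of graphs whose $\hat{S}_T$-image one computes by hand. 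In short: reduce to cofibrations, filter by cells, apply Proposition \ref{Cube lemma} and Proposition \ref{prop seq} cell by cell, and invoke Lemma \ref{lem 3.4.7} as the base case — the friction is entirely in commuting $\hat{S}_T$ past the pushouts, which is handled by the combinatorial control over $\hat{A}_T$ on cofibrations rather than by formal adjunction.
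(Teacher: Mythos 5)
You correctly set up the first half of the argument: since $\hat{A}_T$ preserves colimits, the gluing lemma (Proposition \ref{Cube lemma}) applied to the vertical maps $u_K$, $u_L$, $u_{L'}$ gives a $\Gamma$-weak equivalence from $K'$ to the pushout $\hat{S}_T\hat{A}_T(K)\cup_{\hat{S}_T\hat{A}_T(L)}\hat{S}_T\hat{A}_T(L')$, and you correctly isolate the real difficulty, namely comparing this pushout with $\hat{S}_T\hat{A}_T(K')$. (Your preliminary cofibrant replacements are unnecessary: $L\to K$ is already an inclusion of $\Gamma$-simplicial sets, hence an $\mathcal{I}_\Gamma$-cofibration, which is all the gluing lemma needs.)

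Your resolution of that difficulty, however, is where the proof breaks down. A skeletal induction reducing to a single cell attachment does not make the problem disappear: even for one cell you must still show that $\Sing_T$ carries the graph pushout $\hat{A}_T(K')=\hat{A}_T(K)\cup_{\hat{A}_T(L)}\hat{A}_T(L')$ to a homotopy pushout, and the phrase ``whose $\hat{S}_T$-image one computes by hand'' is exactly where the content must go. There is also no model structure on $\mathcal{G}$ available at this stage (Proposition \ref{prop 3.4.8} is an ingredient in constructing it), so ``$\hat{A}_T$ of a cofibration is a cofibration of graphs'' has no usable meaning. The paper's mechanism is the $r$-NDR machinery: by Corollary \ref{cor 4.3.4} the pair $(\hat{A}_T(K),\hat{A}_T(L))$ is a $2^{k-2}$-NDR pair of graphs, and since $2^{k-2}$ exceeds the diameter of $T$, Theorem \ref{thm 4.3.5} shows that every graph homomorphism $T\times\Sigma^n$ into the pushout factors through $\hat{A}_T(K)$ or through an enlarged neighborhood of the other piece that $\times$-deformation retracts back onto it; this makes $\Sing_T$ of the graph pushout an honest pushout after enlarging the gluing locus, hence a homotopy pushout. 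Your argument never uses the standing hypothesis that $2^{k-2}$ is greater than the diameter of $T$, which is a sign that the essential point has been skipped. A secondary issue: invoking Lemma \ref{lem 3.4.7} as a ``base case'' conflates this proposition with its later application in Corollary \ref{cor 3.4.9}; the proposition is a general gluing statement whose hypotheses are just that $u_K$, $u_L$, $u_{L'}$ are $\Gamma$-weak equivalences, and condition (A) plays no role in its proof.
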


\begin{cor} \label{cor 3.4.9}
Suppose that $T$ satisfies the condition (A). For $n \geq 0$, the unit map $u_{\partial \Delta[n]} : (\Gamma / \Gamma') \times \partial \Delta[n] \rightarrow \hat{S}_T \circ \hat{A}_T((\Gamma / \Gamma') \times \partial \Delta[n])$ is a $\Gamma$-weak equivalence for every subgroup $\Gamma'$.
\end{cor}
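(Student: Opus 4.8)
The plan is to argue by induction on $n$, using the skeletal filtration of $\partial \Delta[n]$ and feeding the induction into Proposition \ref{prop 3.4.8}. The base case $n = 0$ is the statement that $u_{\Gamma/\Gamma'} : \Gamma/\Gamma' \to \hat S_T \circ \hat A_T(\Gamma/\Gamma')$ is a $\Gamma$-weak equivalence; this follows from part (2) and (4) of Lemma \ref{lem 3.4.7} applied with the empty simplicial set in place of $\partial\Delta[n]$ (or directly from the case $K = \Gamma/\Gamma'$ of the condition (A) together with the natural weak equivalences $\mathrm{Sd}^k \to \mathrm{Id}$ and $\mathrm{Id} \to \mathrm{Ex}^k$). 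Actually the cleanest route is: $u_{\Delta[n]_{\Gamma'}}$ is a $\Gamma$-weak equivalence by Lemma \ref{lem 3.4.7}(4), and since $\partial\Delta[0] = \emptyset$ there is nothing to prove when $n=0$ — one simply notes $u_\emptyset$ is trivially a $\Gamma$-weak equivalence (both sides are empty, as $\hat A_T$ and $\hat S_T$ preserve the initial object).

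For the inductive step, suppose the statement holds for all $m < n$, i.e. $u_{(\Gamma/\Gamma')\times\partial\Delta[m]}$ is a $\Gamma$-weak equivalence for every subgroup $\Gamma'$ and every $m < n$. Write $\partial\Delta[n]$ as the union of its $n$ top faces $\Delta[n-1]$, glued along lower-dimensional data; more precisely, build $\partial\Delta[n]$ as an iterated pushout, attaching the faces $d_0,\dots,d_n$ one at a time. At stage $i$ one attaches a copy of $\Delta[n-1]$ along a copy of $\partial\Delta[n-1]$ (the part of its boundary already present). Crossing everything with the discrete $\Gamma$-set $\Gamma/\Gamma'$ and applying Proposition \ref{prop 3.4.8} repeatedly: at each attachment the three relevant simplicial sets are (a copy of) $(\Gamma/\Gamma')\times\partial\Delta[n-1]$, whose unit is a $\Gamma$-weak equivalence by the inductive hypothesis; $(\Gamma/\Gamma')\times\Delta[n-1]$, whose unit is a $\Gamma$-weak equivalence by Lemma \ref{lem 3.4.7}(4); and the partial union assembled so far, whose unit is a $\Gamma$-weak equivalence by the previous application of Proposition \ref{prop 3.4.8} (the partial union at the first stage being just one copy of $(\Gamma/\Gamma')\times\Delta[n-1]$). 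Hence $u_{(\Gamma/\Gamma')\times\partial\Delta[n]}$ is a $\Gamma$-weak equivalence, completing the induction.

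One technical point to check is that the gluing maps involved are genuinely of the form required by Proposition \ref{prop 3.4.8}: the overlaps between faces of $\partial\Delta[n]$ are themselves boundaries of simplices, and the subcomplex of $\partial\Delta[n-1]$ that has already been attached at a given stage is itself a union of faces, so a priori one needs the unit to be a $\Gamma$-weak equivalence on all such subcomplexes, not merely on full boundaries. The way around this is to order the faces so that the intersection of the $i$-th face with the union of the earlier ones is exactly a horn-type or boundary-type subcomplex whose unit is already known; alternatively, one enlarges the inductive hypothesis to read ``$u_L$ is a $\Gamma$-weak equivalence for every subcomplex $L$ of $(\Gamma/\Gamma')\times\Delta[m]$ built from faces, for all $m \le n$,'' and proves that statement by a double induction (on $m$ and on the number of top-dimensional faces). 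This bookkeeping, rather than any genuine homotopy-theoretic difficulty, is the main obstacle; the homotopical content is entirely carried by Lemma \ref{lem 3.4.7} and Proposition \ref{prop 3.4.8}.
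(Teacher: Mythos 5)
Your argument is correct in substance but takes a more laborious route than the paper, and it never uses the one ingredient that makes the bookkeeping disappear: part (3) of Lemma \ref{lem 3.4.7}. The paper's proof is a \emph{single} application of Proposition \ref{prop 3.4.8} per dimension, with $K = \Lambda^n[n]$, $L = \partial\Delta[n-1]$, $L' = \Delta[n-1]$ (all crossed with $\Gamma/\Gamma'$): the pushout is $\partial\Delta[n]$, the unit on the horn is a $\Gamma$-weak equivalence by Lemma \ref{lem 3.4.7}(3), the unit on $\partial\Delta[n-1]$ by induction, and the unit on $\Delta[n-1]$ by Lemma \ref{lem 3.4.7}(4). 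Your face-by-face attachment instead forces you to control the unit on intermediate partial unions of faces, a problem you correctly flag. Be aware that your first proposed fix does not work as stated: for $1 < j < n$ a union of $j$ codimension-one faces of $\Delta[n]$ is neither a horn nor a boundary, so no ordering of the faces makes every intermediate gluing datum ``already known'' from the hypotheses you allow yourself. Your second fix --- strengthening the inductive hypothesis to all unions of codimension-one faces and running a double induction on the dimension and the number of faces --- is genuinely valid (the intersection of a new face with a union of $j$ earlier faces is a union of $j$ codimension-one faces of that $\Delta[n-1]$, so the induction closes up), and it has the mild virtue of reproving the horn case rather than quoting it; but it is strictly more work than the paper's one-step gluing, and you should either carry it out or switch to the horn decomposition.
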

\begin{proof}
We show this by the induction on $n$. The case $n = 0$ is obvious. Suppose $n > 0$. Regard $\Delta[n-1]$ as a subcomplex of $\Delta[n]$ by the map induced by the inclusion $[n-1] \hookrightarrow [n]$. Apply Proposition \ref{prop 3.4.8} to the case of $K = \Lambda^n[n]$, $L = \partial \Delta [n-1]$, and $L' = \Delta [n-1]$.
\end{proof}

\noindent {\it Proof of Theorem \ref{thm 3.4.1}.} Let $K$ be a $\Gamma$-simplicial set. Recall that a generating cofibrations $\mathcal{I}_\Gamma$ of ${\bf SSet}^{\Gamma}$ (Section 2.2) is described as follows:
$$\mathcal{I}_\Gamma = \{ (\Gamma / \Gamma') \times \partial \D [n] \hookrightarrow (\Gamma / \Gamma') \times \D [n] \; | \; \textrm{$n \geq 0$ and $\Gamma' $ is a subgroup of $\Gamma $.}\}$$
Since every $\Gamma$-simplicial set is an $\mathcal{I}_\Gamma$-cell complex, there exist an ordinal $\lambda$ and a colimit preserving functor $X_\bullet : \lambda \rightarrow {\bf SSet}^{\Gamma}$ such that $X_0 = \emptyset$, the colimit $X_\lambda$ of $X_\bullet$ is isomorphic to $K$, and $X_\alpha \rightarrow X_{\alpha + 1}$ is a pushout of an element of $\mathcal{I}_\Gamma$ for every $\alpha < \lambda$. We want to show that the unit map $X_\lambda \rightarrow \hat{S}_T \circ \hat{A}_T(X_\lambda)$ is a $\Gamma$-weak equivalence by the transfinite induction on $\alpha_0 < \lambda$. Suppose that for every $\alpha < \alpha_0$, the unit map $X_\alpha \rightarrow \hat{S}_T \circ \hat{A}_T(X_\alpha)$ is a $\Gamma$-weak equivalence. If $\alpha_0 - 1$ exists, then it follows from Lemma \ref{lem 3.4.7}, Proposition \ref{prop 3.4.8}, and Corollary \ref{cor 3.4.9} that the unit $X_{\alpha_0} \rightarrow \hat{S}_T \circ \hat{A}_T(X_{\alpha_0})$ is a $\Gamma$-weak equivalence. If $\alpha_0$ is a limit ordinal, then Proposition \ref{prop seq} implies that $X_{\alpha_0} \rightarrow \hat{S}_T \circ \hat{A}_T (X_{\alpha_0})$ is a $\Gamma$-weak equivalence since $\Sing_T$ and ${\rm Ex}$ preserve sequential colimits. This completes the proof of Theorem \ref{thm 3.4.1}. \qed

\vspace{2mm} We can prove Theorem \ref{thm 3.4.2} in a similar way. Define the small family $\mathcal{I}'_\Gamma$ of $\Gamma$-simplicial maps as follows:
$$\mathcal{I}'_\Gamma = \{ \Gamma \times \partial \Delta[n] \hookrightarrow \Gamma \times \Delta[n] \; | \; n \geq 0\}.$$
Then a $\Gamma$-simplicial set $K$ is free if and only if $K$ is an $\mathcal{I}'_\Gamma$-cell complex. Thus, in the proof of Theorem \ref{thm 3.4.1}, replacing ``condition (A)" to ``condition (B)" and considering only the trivial subgroup $1$, we have the proof of Theorem \ref{thm 3.4.2}.

\subsection{Model structure}

In this section, we introduce two model structures on the category of graphs.

\begin{thm} \label{thm 5.1}
The category $\mathcal{G}$ of graphs has the cofibrantly generated model structure with generating cofibrations $A_{K_2} \circ {\rm Sd}^3 (\mathcal{I}_{\Z_2})$ and with generating trivial cofibrations $A_{K_2} \circ {\rm Sd}^3 (\mathcal{J}_{\Z_2})$ (see Section 2.2). A graph homomorphism $f: G \rightarrow H$ is a weak equivalence if and only if the map $f_* : B(G) \rightarrow B(H)$ induced by $f$ is a $\Z_2$-homotopy equivalence. Moreover, the adjoint pair
$$\begin{CD}
(A_{K_2} \circ {\rm Sd}^3, {\rm Ex}^3 \circ \Sing_{K_2}) : {\bf SSet}^{\Z_2} @>>> \mathcal{G}
\end{CD}$$
is a Quillen equivalence.
\end{thm}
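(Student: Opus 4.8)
## Proof Proposal for Theorem \ref{thm 5.1}

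The plan is to transfer the model structure from ${\bf SSet}^{\Z_2}$ along the adjunction $(A_{K_2} \circ {\rm Sd}^3, {\rm Ex}^3 \circ \Sing_{K_2})$ using a standard transfer (lifting) criterion for cofibrantly generated model categories, and then to upgrade the transferred structure to a Quillen equivalence by invoking Theorem \ref{thm 3.4.1} together with Corollary \ref{cor 4.2.2}. First I would observe that $\mathcal{G}$ is complete and cocomplete, that $A_{K_2} \circ {\rm Sd}^3$ preserves all colimits (being a composite of left adjoints), and that the domains of the proposed generating (trivial) cofibrations are small in $\mathcal{G}$, since $\mathcal{G}$ is locally presentable. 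By Kan's transfer theorem (e.g. Theorem 11.3.2 of \cite{Hirschhorn}), it then suffices to check one thing: every relative $A_{K_2} \circ {\rm Sd}^3(\mathcal{J}_{\Z_2})$-cell complex is sent by ${\rm Ex}^3 \circ \Sing_{K_2}$ to a weak equivalence in ${\bf SSet}^{\Z_2}$, equivalently that these maps are weak equivalences in $\mathcal{G}$ in the sense to be defined. This is where $2^{k-2} > r$ with $k = 3$ and $T = K_2$ (diameter $r = 1$) is used, so all the results of Section 3 apply with $\hat{A}_{K_2} = A_{K_2} \circ {\rm Sd}^3$ and $\hat{S}_{K_2} = {\rm Ex}^3 \circ \Sing_{K_2}$.

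The key verification breaks into two parts. First, the acyclicity of the generating trivial cofibrations: each generator $A_{K_2} \circ {\rm Sd}^3((\Z_2/\Gamma') \times \Lambda^r[n] \hookrightarrow (\Z_2/\Gamma') \times \Delta[n])$ should be a $\times$-homotopy equivalence of graphs — this is exactly the content of Lemma \ref{lem 3.4.7}(1)--(2) applied to $T = K_2$, which satisfies condition (A) by case (2) of the discussion preceding Theorem \ref{thm 3.4.1}. Second, I must show that the pushout and transfinite composition of such maps remains a weak equivalence; this follows from Proposition \ref{prop 3.4.8} and Proposition \ref{prop seq} combined with the fact that $\Sing_{K_2}$ and ${\rm Ex}$ commute with the relevant colimits, just as in the proof of Theorem \ref{thm 3.4.1}. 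Next I would identify the resulting weak equivalences: by definition in the transferred structure, $f: G \to H$ is a weak equivalence iff ${\rm Ex}^3 \circ \Sing_{K_2}(f)$ is a $\Z_2$-weak equivalence of simplicial sets; since ${\rm Ex}$ is a weak equivalence and, by Theorem \ref{thm sing_hom_eq} and Corollary \ref{cor 4.2.2}, $|\Sing(K_2,G)| \simeq |\Hom(K_2,G)| \cong |B(G)|$ naturally and $\Z_2$-equivariantly, this is equivalent to $f_* : B(G) \to B(H)$ being a $\Z_2$-homotopy equivalence (using Proposition \ref{Bredon} to pass between $\Z_2$-weak equivalence of the realizations and $\Z_2$-homotopy equivalence of the $\Z_2$-CW-complexes).

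For the Quillen equivalence, the adjunction is a Quillen pair essentially by construction: $A_{K_2} \circ {\rm Sd}^3$ sends generating (trivial) cofibrations to (trivial) cofibrations since the latter are \emph{defined} as their images. To see it is a Quillen equivalence, it suffices to show that the derived unit and counit are weak equivalences. The derived unit is handled directly by Theorem \ref{thm 3.4.1}: for every (cofibrant) $\Z_2$-simplicial set $K$, the unit $K \to {\rm Ex}^3 \circ \Sing_{K_2} \circ A_{K_2} \circ {\rm Sd}^3(K)$ is a $\Z_2$-weak equivalence, because $K_2$ satisfies condition (A). For the counit, one uses that every object of $\mathcal{G}$ is fibrant (since the transferred fibrant objects are those whose image under ${\rm Ex}^3 \circ \Sing_{K_2}$ is Kan, and all simplicial sets of the form ${\rm Ex}^3(-)$ are Kan) together with the two-out-of-three property: for $G \in \mathcal{G}$, applying ${\rm Ex}^3 \circ \Sing_{K_2}$ to the counit and composing with the unit of ${\rm Ex}^3 \circ \Sing_{K_2}(G)$ gives the identity up to the triangle identities, and since the unit on $\Sing_{K_2}(G)$ is a weak equivalence (by the derived-unit statement applied to a suitable cofibrant replacement and naturality), the counit is a weak equivalence too.

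The main obstacle will be the first half of the acyclicity check — verifying that relative $A_{K_2} \circ {\rm Sd}^3(\mathcal{J}_{\Z_2})$-cell complexes are weak equivalences — and this is precisely why the proof is organized to depend on Lemma \ref{lem 3.4.7} and Proposition \ref{prop 3.4.8}, whose own proofs (deferred to Section 4) compare the strong homotopy theory of simplicial complexes with $\times$-homotopy theory. Once those structural inputs are granted, everything else is a formal unwinding of the transfer principle and the translation between $\Sing$, $\Hom$, and $B$.
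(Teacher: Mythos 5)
Your overall strategy matches the paper's: existence of the model structure via Kan's transfer theorem (Hirschhorn 11.3.2), with the acyclicity condition supplied by Lemma \ref{lem 3.4.7} and Proposition \ref{prop 3.4.8} (the paper cites part (3) of the lemma directly, which already packages the unit being a weak equivalence on the generating trivial cofibrations, rather than parts (1)--(2) plus a separate translation), and the identification of weak equivalences via Theorem \ref{thm sing_hom_eq}, Corollary \ref{cor 4.2.2}, and Proposition \ref{Bredon}. Up to that point the proposal is sound and essentially identical to the paper.

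The Quillen-equivalence step, however, contains a genuine error. You claim that every object of $\mathcal{G}$ is fibrant because ``all simplicial sets of the form ${\rm Ex}^3(-)$ are Kan.'' This is false: only ${\rm Ex}^\infty$ produces Kan complexes, and a finite iterate ${\rm Ex}^k(K)$ is in general not fibrant (e.g.\ ${\rm Ex}(\partial\Delta[2])$ is not Kan). So fibrancy of ${\rm Ex}^3\Sing_{K_2}(G)$ in ${\bf SSet}^{\Z_2}$ --- which is what fibrancy of $G$ in the transferred structure amounts to --- is not automatic, and your counit argument as written does not go through. The error is avoidable: the paper instead invokes Corollary 1.3.16 of \cite{Hovey}, for which one only needs (i) that $\hat{S}_{K_2}$ reflects weak equivalences between fibrant objects --- immediate, since in the right-transferred structure \emph{all} weak equivalences are by definition detected by $\hat{S}_{K_2}$ --- and (ii) that the derived unit $K \to \hat{S}_{K_2}R\hat{A}_{K_2}(K)$ is a weak equivalence, which follows from Theorem \ref{thm 3.4.1} once one notes that $\hat{A}_{K_2}(K)\to R\hat{A}_{K_2}(K)$ is a trivial cofibration and hence is sent by $\hat{S}_{K_2}$ to a $\Z_2$-weak equivalence. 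Note also that your statement of the derived unit omits this fibrant-replacement comparison, which is the one nontrivial point in passing from Theorem \ref{thm 3.4.1} to condition (ii); you should make it explicit rather than applying Theorem \ref{thm 3.4.1} ``directly.''
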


In the case of $\Sing_{\bf 1}$, we have the following theorem. Note that the Hom complex $\Hom ({\bf 1}, G)$ is the face poset of the clique complex of the maximal reflexive subgraph of $G$. We write $\mathcal{I}$ and $\mathcal{J}$ for $\mathcal{I}_{\Gamma}$ and $\mathcal{J}_{\Gamma}$ if the group $\Gamma$ is the trivial group $1$.

\begin{thm} \label{thm 5.2}
The category $\mathcal{G}$ of graphs has the cofibrantly generated model structure with generating cofibrations $A_{\bf 1} \circ {\rm Sd}^2(\mathcal{I})$ and with generating trivial cofibrations $A_{\bf 1} \circ {\rm Sd}^2 (\mathcal{J})$. A graph homomorphism $f: G \rightarrow H$ is a weak equivalence if and only if $f_* : \Hom({\bf 1} , G) \rightarrow \Hom({\bf 1}, H)$ induced by $f$ is a homotopy equivalence. Moreover, the adjoint pair
$$\begin{CD}
(A_{\bf 1} \circ {\rm Sd}^2 , {\rm Ex}^2 \circ \Sing_{\bf 1}) : {\bf SSet} @>>> \mathcal{G}
\end{CD}$$
is a Quillen equivalence.
\end{thm}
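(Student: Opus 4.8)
The strategy is to deduce Theorem~\ref{thm 5.2} exactly as the $\Z_2$-equivariant statement Theorem~\ref{thm 5.1} is proved, but in the non-equivariant setting with $T = {\bf 1}$ the looped point and $\Gamma$ the trivial group. First I would check that $T = {\bf 1}$ is a finite connected graph with at least one edge (the loop), so the hypotheses of Theorem~\ref{thm 3.4.1} apply; its diameter is $0$, hence $2^{k-2} > 0$ holds already for $k = 2$, which is why $\Sd^2$ suffices. I would then verify that ${\bf 1}$ satisfies condition (A) for the trivial group: the only subgroup is trivial, ${\bf 1}/1 = {\bf 1}$, and the map $1 \to \Sing({\bf 1},{\bf 1})$ must be shown to be a weak equivalence. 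Since $\Hom({\bf 1},{\bf 1})$ is a single point (the unique endomorphism of the looped vertex) and $\Sing({\bf 1},{\bf 1}) \simeq \Hom({\bf 1},{\bf 1})$ by Theorem~\ref{thm sing_hom_eq}, this is immediate. Therefore Theorem~\ref{thm 3.4.1} gives that the unit ${\rm Id} \to {\rm Ex}^2 \circ \Sing_{\bf 1} \circ A_{\bf 1} \circ \Sd^2$ is a natural weak equivalence.

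Next I would transport the model structure along the adjunction $(A_{\bf 1} \circ \Sd^2, {\rm Ex}^2 \circ \Sing_{\bf 1})$ from ${\bf SSet}$ to $\mathcal{G}$. The tool is the standard transfer/lifting theorem for cofibrantly generated model categories (Kan's recognition theorem): one must check that $\mathcal{G}$ is complete and cocomplete (clear, as $\mathcal{G}$ is a presheaf-like category of sets with a symmetric relation), that the left adjoint $A_{\bf 1}\circ\Sd^2$ sends the generating sets $\mathcal{I}$, $\mathcal{J}$ to sets admitting the small object argument (automatic since everything is finite/small), and crucially that relative $A_{\bf 1}\circ\Sd^2(\mathcal{J})$-cell complexes are sent to weak equivalences in $\mathcal{G}$ — where a weak equivalence in $\mathcal{G}$ is \emph{defined} to be a homomorphism inducing a homotopy equivalence on $\Hom({\bf 1},-)$. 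This acyclicity condition is where the real content sits: I expect it to follow from the same structural inputs used for Theorem~\ref{thm 5.1}, namely the analogue of Lemma~\ref{lem 3.4.7} (that $\hat{A}_{\bf 1}$ carries horn inclusions to $\times$-homotopy equivalences and that the units on horns are weak equivalences) together with Lemma~\ref{lem 2.2.1} and Corollary~\ref{cor 2.2.2}, which translate $\times$-homotopy equivalences of graphs into homotopy equivalences of Hom complexes. Combined with Proposition~\ref{Cube lemma} and Proposition~\ref{prop seq} to handle pushouts and transfinite compositions, this shows $A_{\bf 1}\circ\Sd^2$ of a trivial cofibration is a weak equivalence.

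With the model structure in place, the identification of weak equivalences as the claimed class, and the Quillen equivalence, are the final steps. For the adjoint pair to be Quillen it suffices by construction that ${\rm Ex}^2\circ\Sing_{\bf 1}$ preserves fibrations and trivial fibrations, which is exactly the defining lifting property of the transferred structure; alternatively one notes ${\rm Ex}^2\circ\Sing_{\bf 1}$ preserves and reflects weak equivalences because $|\Sing({\bf 1},G)| \simeq |\Hom({\bf 1},G)|$ naturally (Theorem~\ref{thm sing_hom_eq}) and ${\rm Ex}$ is a weak equivalence on the nose. For the equivalence, I would invoke the criterion that a Quillen pair is a Quillen equivalence iff the right adjoint reflects weak equivalences between fibrant objects \emph{and} the derived unit is a weak equivalence; the latter is precisely Theorem~\ref{thm 3.4.1} applied to $T = {\bf 1}$ (the counit, or rather the unit on all of ${\bf SSet}$, being a natural weak equivalence), and the former follows since every graph is fibrant in the transferred structure and ${\rm Ex}^2\circ\Sing_{\bf 1}$ reflects weak equivalences by definition of the weak equivalences in $\mathcal{G}$.

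\textbf{Main obstacle.} The hard part is the acyclicity of $A_{\bf 1}\circ\Sd^2(\mathcal{J})$-cell complexes, i.e. verifying that pushouts and transfinite compositions of $A_{\bf 1}\circ\Sd^2$ applied to horn inclusions are weak equivalences in $\mathcal{G}$. One cannot simply quote that $A_{\bf 1}\circ\Sd^2$ is a left Quillen functor — there is no model structure yet — so one must argue directly, using that $\hat A_{\bf 1}$ of a horn inclusion is a $\times$-homotopy equivalence (the $\Gamma$-trivial case of Lemma~\ref{lem 3.4.7}(1)) and then gluing these equivalences along pushouts via the cube lemma, which requires knowing the relevant graph inclusions are cofibrations and the objects cofibrant in the about-to-be-constructed structure — a mild circularity that is resolved exactly as in the proof of Theorem~\ref{thm 5.1}. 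Everything else — completeness/cocompleteness of $\mathcal{G}$, smallness, the identification of weak equivalences, and the Quillen-equivalence criterion — is routine once Theorem~\ref{thm 3.4.1} is in hand for $T={\bf 1}$.
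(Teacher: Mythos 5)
Your proposal follows essentially the same route as the paper, which itself only proves Theorem \ref{thm 5.1} and declares the ${\bf 1}$-case "similar": transfer the model structure via the recognition theorem, verify acyclicity through Lemma \ref{lem 3.4.7} and Proposition \ref{prop 3.4.8}, and get the Quillen equivalence from Theorem \ref{thm 3.4.1} once condition (A) is checked for ${\bf 1}$ (which you correctly reduce to $\Hom({\bf 1},{\bf 1})$ being a point, with diameter $0$ explaining the exponent $2$). The only cosmetic remark is that the "circularity" you worry about does not arise in the paper's argument, since the gluing in Proposition \ref{prop 3.4.8} is carried out on the simplicial-set side via the $r$-NDR machinery of Theorem \ref{thm 4.3.5}, not via the cube lemma in $\mathcal{G}$.
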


We only give the proof of Theorem \ref{thm 5.1} since the other is similar.

First we show that $\mathcal{G}$ has the model structure described in Theorem \ref{thm 5.1}. It is clear that every object of $\mathcal{G}$ is a small object in the sense of Definition 10.4.1 of \cite{Hirschhorn}. Thus by Theorem 11.3.2 of \cite{Hirschhorn}, it suffices to show that ${\rm Ex}^3 \circ \Sing_{K_2}$ takes a pushout of an element of $A_{K_2} \circ {\rm Sd}^3(\mathcal{J}_{\Z_2})$ to a $\Gamma$-weak equivalence. But this follows from (3) of Lemma \ref{lem 3.4.7} and Proposition \ref{prop 3.4.8}. Thus $\mathcal{G}$ has the model structure described in Theorem \ref{thm 5.1}.

Next we show that the adjoint pair $(\hat{A}_{K_2}, \hat{S}_{K_2}) \: =(A_{K_2} \circ {\rm Sd}^3, {\rm Ex}^3 \circ \Sing_{K_2})$ is a Quillen equivalence. By Corollary 1.3.16 of \cite{Hovey}, it suffices to verify the following:
\begin{itemize}
\item[(1)] Let $f: X \rightarrow Y$ be a graph homomorphism between fibrant objects in $\mathcal{G}$. If $\hat{S}_{K_2}(f)$ is a $\Z_2$-weak equivalence, then $f$ is a weak equivalence.
\item[(2)] For every $\Z_2$-simplicial set $K$, the composition of
$$K \rightarrow  \hat{S}_{K_2} \hat{A}_{K_2}(K) \rightarrow \hat{S}_{K_2} R \hat{A}_{K_2}(K)$$
is a $\Gamma$-weak equivalence. Here $R$ denotes a fibrant replacement functor of ${\bf SSet}^{\Gamma}$.
\end{itemize}
The definition of weak equivalences of $\mathcal{G}$ follows (1) and that the right arrow in (2) is a $\Gamma$-weak equivalence. Thus (2) follows from Theorem \ref{thm 3.4.1} since $K_2$ satisfies the condition (A) in Section 3.2. This completes the proof.

\section{Strong homotopy theory}

The purpose of this section is to show Lemma \ref{lem 3.4.7} and Proposition \ref{prop 3.4.8}. Lemma \ref{lem 3.4.7} is proved in Section 4.4 and Proposition \ref{prop 3.4.8} is proved in Section 4.6.

The difficulty of the proofs seems to lie in the following fact: Let $K$ be a $\Gamma$-simplicial set and $L$ a $\Gamma$-subcomplex of it. In general, $|\Sing_T \circ A_T(L)|$ is not a deformation retract of $|\Sing_T \circ A_T (K)|$ even if $|L|$ is a deformation retract of $|K|$. On the other hand, the strong collapses of $\Gamma$-complexes and $\times$-homotopy deformation retracts, which will be introduced later, have the following important properties:
\begin{itemize}
\item If a graph $H$ is a $\times$-deformation retract of $G$, then $|\Sing_T(H)|$ is a deformation retract of $|\Sing_T(G)|$ (Lemma \ref{lem 2.2.1}).
\item If a $\Gamma$-simplicial complex $\K$ strongly $\Gamma$-collapses to its $\Gamma$-subomplex $\L$, then $\A_T(\L)$ is a $\times$-deformation retract of $\A_T(\K)$ (Proposition \ref{prop 4.3.1}).
\item For any pair of $(\K , \L)$ of finite $\Gamma$-simplicial complexes, $\Sd^k (\L)$ has a large neighborhood in $\Sd^k(\K)$ which strongly $\Gamma$-collapses to $\Sd^k(\L)$ if we take $k$ to be sufficiently large (Corollary \ref{cor 2.4.9}).
\end{itemize}

\subsection{Simplicial complexes}

An {\it (abstract) simplicial complex} consists of a family $\K$ of finite subsets of a set $S$ such that $\sigma \in \K$ and $\tau \subset \sigma$ imply $\tau \in \K$. The {\it vertex set $V(\K)$ of $\K$} is the union of the all simplices of $\K$. Let $\K$ and $\L$ be simplicial complexes. A {\it simplicial map from $\K$ to $\L$} is a map $f: V(\K) \rightarrow V(\L)$ such that $\sigma \in \K$ implies $f(\sigma) \in \L$. The {\it geometric realization of $\K$} (see \cite{Kozlov book}) is denoted by $|\K |$. We assign the topological terminology to simplicial complexes by the geometric realization functor. For example, a simplicial map $f: \K \rightarrow \L$ is a homotopy equivalence if and only if the continuous map $|f| : |\K | \rightarrow |\L |$ induced by $f$ is a homotopy equivalence.

Let $\K$ and $\L$ be simplicial complexes. A map $\eta : V(\K) \rightarrow 2^{V(\L)} \setminus \{ \emptyset\}$ is a {\it simplicial multi-map} if, for every simplex $\sigma$ of $\K$, the subset $\bigcup_{v \in \sigma} \eta(v)$ of $V(\L)$ is a simplex of $\L$. For a pair of simplicial multi-maps $\eta$ and $\eta'$, we write $\eta \leq \eta'$ to mean that $\eta(v) \subset \eta'(v)$ for every $v \in V(\K)$. The poset of simplicial multi-maps from $\K$ to $\L$ is denoted by ${\rm Map}(\K,\L)$. A simplicial map is identified with a minimal point of $\Map(\K,\L)$. Two simplicial maps $f$ and $g$ are {\it strongly homotopic} if they belong to the same connected component of $\Map(\K,\L)$.

Recall that two simplicial maps $f, g : \K \rightarrow \L$ are {\it contiguous} if $\sigma \in \K$ implies $f(\sigma) \cup g(\sigma) \in \L$. Hence $f$ and $g$ are contiguous if and only if there is an element $\eta$ of ${\rm Map}(\K,\L)$ such that $f \leq \eta$ and $g \leq \eta$. Thus our definition of strong homotopy coincides with the original one of \cite{BM}.

Let $\K$ and $\L$ be $\Gamma$-simplicial complexes. A simplicial multi-map $\eta \in \Map(\K, \L)$ is {\it $\Gamma$-equivariant} if $\gamma (\eta(v)) = \eta(\gamma v)$ for each $v \in V(\K)$ and $\gamma \in \Gamma$. The induced subposet of $\Map(\K,\L)$ consisting of the $\Gamma$-equivariant multi-maps is denoted by $\Map_\Gamma(\K , \L)$. Two $\Gamma$-simplicial maps are strongly $\Gamma$-homotopic if they belong to the same connected component of $\Map_\Gamma(\K,\L)$.

\subsection{Posets}
For a poset $P$, the {\it order complex $\Delta(P)$ of $P$} is the abstract simplicial complex whose vertex set is the underlying set of $P$ and whose simplices are the finite chains of $P$. The {\it classifying space of $P$} is the geometric realization of $\Delta(P)$, and is denoted by $|P|$. It is easy to see that this definition coincides with the usual definition of the classifying space, i.e. the geometric realization of the nerve of $P$.

Let $f$ and $g$ be order-preserving maps from $P$ to $Q$. We write $f \leq g$ to mean that $f(x) \leq g(x)$ for every element $x$ of $P$. The poset of order-preserving maps from $P$ to $Q$ is denoted by ${\rm Poset}(P,Q)$. The order-preserving maps $f$ and $g$ are {\it strongly homotopic} if they belong to the same connected component of ${\rm Poset}(P,Q)$. If $f$ and $g$ are strongly homotopic, then they are homotopic, i.e. $|f|, |g| : |P| \rightarrow |Q|$ are homotopic. In fact $f$ and $g$ induce simplicially homotopic maps between the nerves (see Proposition 14.2.10 of \cite{Hirschhorn}), and many results concerning strong homotopy theory in this section and Lemma \ref{lem 2.2.1} and Corollary \ref{cor 2.2.2} hold for simplicial homotopy.

We call a $\Gamma$-equivariant order-preserving map between $\Gamma$-posets a $\Gamma$-poset map, for short. If $P$ and $Q$ are $\Gamma$-posets, then we denote by ${\rm Poset}_\Gamma (P,Q)$ the induced subposet of ${\rm Poset}(P,Q)$ consisting of $\Gamma$-poset maps. Two $\Gamma$-poset maps are strongly $\Gamma$-homotopic if they belong to the same connected component of ${\rm Poset}_\Gamma(P,Q)$. It is easy to see that if $f$ and $g$ are strongly $\Gamma$-homotopic, then they are also $\Gamma$-homotopic.

The {\it face poset of a simplicial complex $\K$} is the poset of non-empty simplices of $\K$ ordered by inclusion, and is denoted by $F \K$. For a simplicial map $f : \K \rightarrow \L$, define the order-preserving map $Ff : F\K \rightarrow F\L$ by the correspondence $\sigma \mapsto f(\sigma)$.

\begin{lem}[Barmak-Minian \cite{BM}]\label{lem 2.3.2}
Let $\K$ and $\L$ be $\Gamma$-simplicial complexes, $f$ and $g$ $\Gamma$-simplicial maps from $\K$ to $\L$, and suppose that $f$ and $g$ are strongly $\Gamma$-homotopic. Then the order-preserving maps $Ff$ and $Fg$ are strongly $\Gamma$-homotopic.
\end{lem}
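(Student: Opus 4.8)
The plan is to reduce the statement about $\Gamma$-equivariant strong homotopy of simplicial maps to the corresponding statement about $\Gamma$-equivariant multi-maps, by constructing, for each $\Gamma$-simplicial multi-map $\eta \in \Map_\Gamma(\K,\L)$, an associated $\Gamma$-poset multi-map $F\eta \in \Poset_\Gamma(F\K, F\L)$ in a way that is compatible with the partial order on multi-maps and sends a simplicial map $f$ to its face-poset map $Ff$. First I would recall that a $\Gamma$-equivariant strong homotopy from $f$ to $g$ is, by definition, a finite zig-zag of $\Gamma$-simplicial multi-maps $\eta_0, \eta_1, \dots, \eta_m$ in $\Map_\Gamma(\K,\L)$ in which consecutive ones are comparable (say $\eta_i \le \eta_{i+1}$ or $\eta_i \ge \eta_{i+1}$), with $f \le \eta_0$ and $g \ge \eta_m$ (or the symmetric arrangement); equivalently, $f$ and $g$ lie in the same connected component of $\Map_\Gamma(\K,\L)$. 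Thus it suffices to produce an order-preserving $\Gamma$-equivariant assignment $\eta \mapsto F\eta$ from $\Map_\Gamma(\K,\L)$ to (a suitable poset of $\Gamma$-equivariant ``multi-maps'' on face posets), which takes any zig-zag witnessing $f \simeq g$ to a zig-zag witnessing $Ff \simeq Fg$.

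Concretely, for $\eta \in \Map_\Gamma(\K,\L)$ I would define $F\eta : F\K \to 2^{F\L}\setminus\{\emptyset\}$ by sending a non-empty simplex $\sigma \in F\K$ to the set of non-empty simplices of $\L$ contained in $\bigcup_{v\in\sigma}\eta(v)$; this is a non-empty simplex of $\L$ by the multi-map condition, so the prescription makes sense and in fact $F\eta(\sigma)$ is an interval (the whole face poset of the simplex $\bigcup_{v\in\sigma}\eta(v)$). One checks directly that: (i) if $\sigma \subseteq \sigma'$ then $F\eta(\sigma) \subseteq F\eta(\sigma')$, so $F\eta$ is a ``poset multi-map'' in the appropriate sense; (ii) if $\eta \le \eta'$ in $\Map(\K,\L)$ then $F\eta \le F\eta'$; (iii) $F\eta$ is $\Gamma$-equivariant when $\eta$ is, since the union $\bigcup_{v\in\sigma}\eta(v)$ intertwines the $\Gamma$-actions; and (iv) when $\eta = f$ is a genuine simplicial map (a minimal point of $\Map(\K,\L)$), $\bigcup_{v\in\sigma} \{f(v)\} = f(\sigma)$, and the minimal point of $\Poset(F\K, F\L)$ below $F\eta$ — namely $\sigma \mapsto f(\sigma)$ — is exactly $Ff$. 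The point is that comparability of $\eta_i$ with $\eta_{i+1}$ passes through $F$, and that the endpoints $f, g$ of the zig-zag sit below (resp. above) the images of the first (resp. last) term, so that $Ff$ and $Fg$ are connected in $\Poset_\Gamma(F\K, F\L)$.

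To finish I would package (i)–(iv) as the statement that $F$ induces a map of posets $\Map_\Gamma(\K,\L) \to \Poset_\Gamma(F\K, F\L)$ after composing with ``pick any minimal poset map below,'' or more cleanly: the intermediate multi-maps $F\eta_i$ themselves live in a poset of $\Gamma$-equivariant order-interval-valued maps whose connected components refine those of $\Poset_\Gamma(F\K, F\L)$, so a path in $\Map_\Gamma(\K,\L)$ joining $f$ to $g$ yields a path joining $Ff$ to $Fg$. I expect the main obstacle to be purely bookkeeping: verifying carefully that $F\eta$ is well-defined (that $\bigcup_{v\in\sigma}\eta(v)$ really is a simplex, which is exactly the multi-map axiom, combined with the fact that a non-empty subset of a simplex is again a simplex) and that the equivariance $\gamma\cdot F\eta(\sigma) = F\eta(\gamma\sigma)$ holds, together with checking that $Ff$ and $Fg$ are genuinely the minimal poset maps extracted at the two ends of the zig-zag rather than merely strongly homotopic to them. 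None of these steps is deep; the content is organizing the correspondence so that comparability and $\Gamma$-equivariance are manifestly preserved, which is the substance of the cited result of Barmak–Minian in the non-equivariant case.
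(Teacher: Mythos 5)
Your core idea is the right one and is the paper's: push a zig-zag in $\Map_\Gamma(\K,\L)$ through a face-poset construction. But you have made the construction multi-valued where it can (and should) be single-valued, and this creates a genuine gap in your final step. Since $\eta$ is a simplicial multi-map, $\bigcup_{v\in\sigma}\eta(v)$ is itself a single non-empty simplex of $\L$, i.e.\ a single element of $F\L$. So the correct definition is simply $F\eta(\sigma)=\bigcup_{v\in\sigma}\eta(v)$, which is an honest order-preserving map $F\K\to F\L$, is $\Gamma$-equivariant when $\eta$ is, equals $Ff$ when $\eta=f$ is a simplicial map, and depends order-preservingly on $\eta$. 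Thus $F:\Map_\Gamma(\K,\L)\to{\rm Poset}_\Gamma(F\K,F\L)$ is an order-preserving map of posets; order-preserving maps send connected components into connected components, so $f\simeq g$ in $\Map_\Gamma(\K,\L)$ immediately gives $Ff\simeq Fg$ in ${\rm Poset}_\Gamma(F\K,F\L)$. That is the entire proof in the paper.

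By instead letting $F\eta(\sigma)$ be the set of all non-empty faces of $\bigcup_{v\in\sigma}\eta(v)$, you land outside ${\rm Poset}_\Gamma(F\K,F\L)$ and must get back in, and neither of your two proposed exits works as stated. ``Pick any minimal poset map below $F\eta$'' is not well-defined as an order-preserving or equivariant selection (a minimal selection would pick a single vertex $\{f(v)\}$ from each $F\eta(\sigma)$, which is not $Ff$ --- note $Ff$ is the \emph{maximum}, not a minimum, of your interval $F\eta(\sigma)$ when $\eta=f$). And the assertion that the components of your poset of interval-valued maps ``refine'' those of ${\rm Poset}_\Gamma(F\K,F\L)$ is exactly the point that needs proof; the natural way to prove it is to take the maximum of each interval, which collapses your construction to the single-valued one above. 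So the repair is easy, but as written the last paragraph does not close the argument.
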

\begin{proof}
This lemma clearly follows from the fact that the map
$$F :\Map_\Gamma (\K,\L) \longrightarrow {\rm Poset}_\Gamma (F\K,F\L), \; \eta \mapsto \Big(\sigma \mapsto \bigcup_{v \in \sigma} \eta(v) \Big)$$
is an order-preserving map.
\end{proof}

Thus if two simplicial maps $f,g : \K \rightarrow \L$ are strongly $\Gamma$-homotopic, then they are actually $\Gamma$-homotopic, i.e. $|f|$ and $|g|$ are $\Gamma$-homotopic.

\subsection{Strong collapse}

In this section, we consider the notion of the deformation retracts in the sense of the strong homotopy theory of $\Gamma$-posets and $\Gamma$-simplicial complexes. The goal of this section is Corollary \ref{cor 2.4.5}. Although the precise statements of many of the results given here did not appear in Barmak and Minian \cite{BM}, the ideas of the proofs already appearing there. However, for the reader's convenience, we shall give precise proofs.

\begin{dfn}
Let $P$ be a $\Gamma$-poset and $Q$ an induced $\Gamma$-subposet of $P$. Define ${\rm Def}_\Gamma(P,Q)$ to be the induced subposet of ${\rm Poset}_\Gamma (P,P)$ consisting of the $\Gamma$-poset maps which fix each point of $Q$. {\it $P$ strongly $\Gamma$-collapses to $Q$} if there is an element $f$ belonging to the identity component of ${\rm Def}_\Gamma(P,Q)$ with image contained in $Q$.
\end{dfn}

\begin{eg} \label{eg 2.4.0.0}
Let $c : P \rightarrow P$ be a $\Gamma$-equivariant closure operator, i.e. $c^2 = c$ and either $c \geq {\rm id}$ or $c \leq {\rm id}$ holds. Then $P$ strongly $\Gamma$-collapses to $c(P)$.
\end{eg}

The associated notion of simplicial complexes is similarly defined:

\begin{dfn}
Let $\L$ be an induced $\Gamma$-subcomplex of a $\Gamma$-simplicial complex $\K$. Let ${\rm Def}_\Gamma (\K,\L)$ be the induced subposet of $\Map_\Gamma (\K,\K)$ consisting of the $\Gamma$-equivariant simplicial multi-maps $\eta$ such that $\eta (v) = \{ v\}$ for every $v \in V(\L)$. {\it $\K$ strongly collapses to $\L$} if there is a simplicial map $f$ belonging to the identity component of ${\rm Def}(\K,\L)$ with image contained in $\L$.
\end{dfn}

\begin{rem} \label{rem 2.4.0.0}
Let $P$ be a poset and $Q$ is an induced subposet of $P$. Then $P$ strongly collapses to $Q$ if and only if there is a sequence $(f_0,\cdots, f_n)$ with finite length which satisfies the following conditions:
\begin{itemize}
\item[(1)] $f_0 = {\rm id}_P$ and $f_n(P) \subset Q$.
\item[(2)] $f_i(y) = y$ for every $y \in Q$ and $i=0,1,\cdots, n$.
\item[(3)] $f_i$ and $f_{i-1}$ are comparable for $i = 1,\cdots, n$.
\end{itemize}
Replacing ``comparable" to ``contiguous", we have a similar formulation for simplicial complexes.
\end{rem}

Recall that the group $\Gamma$ is assumed to be finite. Here we note the following obvious lemma, whose proof is omitted.

\begin{lem} \label{lem 3.5.1}
Let $\Gamma$ be a finite group, and $x$ an element of a $\Gamma$-poset $P$. If $\gamma x$ and $x$ are comparable, then we have $\gamma x = x$.
\end{lem}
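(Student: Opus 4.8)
The plan is to use the finiteness of $\Gamma$ only through the fact that $\gamma$ has finite order, together with the observation that, since $P$ is a $\Gamma$-poset, each group element acts on $P$ as an order-preserving automorphism.

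First I would fix $n$ to be the order of $\gamma$, so that $\gamma^n$ is the identity map of $P$. By hypothesis $\gamma x$ and $x$ are comparable, so either $x \leq \gamma x$ or $\gamma x \leq x$. In the second case, applying the order-preserving map $\gamma^{-1}$ to $\gamma x \leq x$ gives $x \leq \gamma^{-1} x$; since $\gamma^{-1}$ also has finite order, it suffices to handle the first case (applied to $\gamma^{-1}$ in place of $\gamma$), and then conclude $\gamma^{-1} x = x$, hence $\gamma x = x$. So I may assume $x \leq \gamma x$.

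Next I would iterate: applying the order-preserving map $\gamma^i$ to the inequality $x \leq \gamma x$ yields $\gamma^i x \leq \gamma^{i+1} x$ for each $0 \leq i \leq n-1$, and concatenating these inequalities produces the chain $x \leq \gamma x \leq \gamma^2 x \leq \cdots \leq \gamma^n x$. Since $\gamma^n x = x$, every term of this chain is equal to $x$; in particular $\gamma x = x$, which is the claim.

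I do not expect any genuine obstacle: the only points requiring care are that a $\Gamma$-action on a poset is by order-preserving (in fact order-)automorphisms, so that the maps $\gamma^i$ may legitimately be applied to inequalities, and the routine bookkeeping in reducing the case $\gamma x \leq x$ to the case $x \leq \gamma x$.
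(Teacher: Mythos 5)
Your proof is correct; the paper explicitly omits the proof of this lemma as obvious, and your argument (iterating $x \leq \gamma x$ through the order-preserving action to get $x \leq \gamma x \leq \cdots \leq \gamma^n x = x$ and concluding by antisymmetry) is exactly the standard argument intended. No gaps.
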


Let $P$ be a $\Gamma$-poset. Recall that a point $x \in P$ is an {\it upper beat point} if $P_{>x} = \{ y \in P \; | \; y > x\}$ has the minimum, and $x$ is a {\it lower beat point of $P$} if $P_{<x} = \{ y \in P \; | \; y < x\}$ has the maximum. A point $x$ is a {\it beat point} if $x$ is either an upper or lower beat point of $P$. If $x$ is a beat point, then $P$ strongly collapses to $P \setminus \Gamma x$. In fact, if $x$ is an upper beat point and $y$ is the minimum of $P_{> x}$, then $y$ does not belong the orbit $\Gamma x$ of $x$ (Lemma \ref{lem 3.5.1}). Since the map
$$f: P \rightarrow P, \; f(z) = \begin{cases}
z & (z \not\in \Gamma x)\\
\gamma y & (z = \gamma x, \; \gamma \in \Gamma)
\end{cases}$$
is a closure operator and $f(P) = P \setminus \Gamma x$. Thus $P$ strongly $\Gamma$-collapses to $P \setminus \Gamma x$.

\begin{lem} \label{lem 2.4.0.1}
Let $Q \subset P' \subset P$ be a sequence of $\Gamma$-posets. Suppose that $P$ strongly $\Gamma$-collapses to $P'$. Then $P$ strongly $\Gamma$-collapses to $Q$ if and only if $P'$ strongly $\Gamma$-collapses to $Q$.
\end{lem}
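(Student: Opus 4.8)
The plan is to use the collapsing map of the first collapse to transport self-maps between $P$ and $P'$, exploiting the elementary fact that precomposition or postcomposition with a fixed order-preserving map is again order-preserving, and hence sends connected components of a poset into connected components. Throughout I would fix a $\Gamma$-poset map $g \colon P \to P$ lying in the identity component of ${\rm Def}_\Gamma(P, P')$ with $g(P) \subset P'$; such a $g$ exists because $P$ strongly $\Gamma$-collapses to $P'$. Two facts about $g$ get used repeatedly: first, $g$ fixes every point of $P'$, hence every point of $Q$; second, since ${\rm Def}_\Gamma(P,P')$ is an induced subposet of ${\rm Def}_\Gamma(P,Q)$ that contains ${\rm id}_P$, the map $g$ also lies in the identity component of ${\rm Def}_\Gamma(P,Q)$.

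To prove that ``$P'$ strongly $\Gamma$-collapses to $Q$'' implies ``$P$ strongly $\Gamma$-collapses to $Q$'', I would choose $h$ in the identity component of ${\rm Def}_\Gamma(P',Q)$ with $h(P') \subset Q$ and consider $\Psi \colon {\rm Poset}_\Gamma(P',P') \to {\rm Poset}_\Gamma(P,P)$, $\varphi \mapsto \varphi \circ g$ (regarding $\varphi \circ g$ as a self-map of $P$ via $P' \subset P$). The routine checks are that $\Psi$ is order-preserving, that it restricts to a map ${\rm Def}_\Gamma(P',Q) \to {\rm Def}_\Gamma(P,Q)$ (if $\varphi$ fixes $Q$ pointwise and $y \in Q$ then $g(y) = y$, so $\Psi(\varphi)(y) = \varphi(g(y)) = \varphi(y) = y$), and that $\Psi({\rm id}_{P'}) = g$. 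Then $\Psi$ carries the identity component of ${\rm Def}_\Gamma(P',Q)$ into the connected component of $g$ in ${\rm Def}_\Gamma(P,Q)$, which is the identity component by the observation above; hence $\Psi(h) = h \circ g$ lies in the identity component of ${\rm Def}_\Gamma(P,Q)$, and $\Psi(h)(P) = h(g(P)) \subset h(P') \subset Q$, as desired.

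For the reverse implication I would argue symmetrically with $\Phi \colon {\rm Poset}_\Gamma(P,P) \to {\rm Poset}_\Gamma(P',P')$, $\psi \mapsto (g \circ \psi)|_{P'}$, which is well defined since $g \circ \psi$ takes values in $P'$. As before $\Phi$ is order-preserving, restricts to a map ${\rm Def}_\Gamma(P,Q) \to {\rm Def}_\Gamma(P',Q)$, and sends ${\rm id}_P$ to $g|_{P'} = {\rm id}_{P'}$. Choosing $f$ in the identity component of ${\rm Def}_\Gamma(P,Q)$ with $f(P) \subset Q$ (which exists since $P$ strongly $\Gamma$-collapses to $Q$), one gets $\Phi(f)$ in the identity component of ${\rm Def}_\Gamma(P',Q)$ and $\Phi(f)(P') = g(f(P')) \subset g(Q) = Q$, so $P'$ strongly $\Gamma$-collapses to $Q$.

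I do not anticipate a serious obstacle: the only points requiring care are the bookkeeping of which ``identity component'' is meant — resolved by noting ${\rm Def}_\Gamma(P,P') \subset {\rm Def}_\Gamma(P,Q)$ — and the verification that $\Phi$ and $\Psi$ fix $Q$ pointwise and land in the correct ${\rm Def}_\Gamma$ subposets, both of which follow immediately from $g$ fixing $P' \supset Q$ pointwise. If one prefers to avoid any appeal to connectivity of classifying spaces, the same argument can be run with the explicit finite sequences of Remark \ref{rem 2.4.0.0}: concatenate the defining sequence for the collapse onto $P'$ with the $\Psi$-image (respectively $\Phi$-image) of the defining sequence for the collapse onto $Q$.
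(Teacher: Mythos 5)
Your proof is correct and follows essentially the same route as the paper: the map $\Phi(\psi)=(g\circ\psi)|_{P'}$ is exactly the paper's order-preserving map $f\mapsto r\circ f\circ i$ with $r=g$, and your $\Psi(\varphi)=\varphi\circ g$ just makes explicit the converse direction that the paper dismisses with a reference to Remark \ref{rem 2.4.0.0}. The only point worth flagging is that you correctly noticed (and justified) the need to know $g$ lies in the identity component of ${\rm Def}_\Gamma(P,Q)$, via the inclusion ${\rm Def}_\Gamma(P,P')\subset{\rm Def}_\Gamma(P,Q)$ — a detail the paper leaves implicit.
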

\begin{proof}
Let $r: P \rightarrow P'$ be a retract of the inclusion $i: P' \hookrightarrow P$. Then the map
$${\rm Def}_\Gamma (P,Q) \rightarrow {\rm Def}_\Gamma (P',Q), \; f \mapsto  r \circ f \circ i$$
is an order-preserving map. This implies that $P'$ strongly $\Gamma$-collapses to $Q$ if $P$ strongly $\Gamma$-collapses to $Q$. It is easy to show the converse by Remark \ref{rem 2.4.0.0}.
\end{proof}

\begin{prop} \label{prop 2.4.1}
Let $Q$ be an induced $\Gamma$-subposet of a finite $\Gamma$-poset $P$. Then $P$ strongly $\Gamma$-collapses to $Q$ if and only if there is a linear order $\{ \alpha_1,\cdots, \alpha_n\}$ on $\Gamma \backslash (P \setminus Q)$ which satisfies the following: For each $i = 1,\cdots, n$, $\alpha_i$ is a family of beat points of $P \setminus ( \alpha_1 \cup \cdots \cup \alpha_{i-1})$ of $P$ (This order of $\Gamma \backslash (P \setminus Q)$ is independent from the order of $P$).
\end{prop}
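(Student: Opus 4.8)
The plan is to prove the two implications separately, using two facts recalled just above the statement: removing a $\Gamma$-orbit $\Gamma x$ of beat points realizes a strong $\Gamma$-collapse $P \searrow P\setminus \Gamma x$, and Lemma \ref{lem 2.4.0.1}, which lets one compose strong $\Gamma$-collapses and also factor them through an intermediate $\Gamma$-subposet. For the ``if'' direction, given the enumeration $\{\alpha_1,\dots,\alpha_n\}$, I set $P_1 = P$ and $P_{i+1} = P_i \setminus \alpha_i$, so each $P_i$ is an induced $\Gamma$-subposet containing $Q$ and $P_{n+1} = P \setminus (P\setminus Q) = Q$. By hypothesis $\alpha_i$ is a $\Gamma$-orbit of beat points of $P_i$, hence $P_i$ strongly $\Gamma$-collapses to $P_{i+1}$, and a downward induction on $i$ via Lemma \ref{lem 2.4.0.1} (with $Q \subset P_{i+1}\subset P_i$ and $P_{i+1}\searrow Q$) gives $P = P_1 \searrow Q$.

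The substantive direction is ``only if'', which I would reduce to the claim that \emph{if $P$ strongly $\Gamma$-collapses to $Q$ and $P\neq Q$, then $P$ has a $\Gamma$-orbit of beat points disjoint from $Q$}. Granting this, pick such an orbit $\alpha$; then $P\searrow P\setminus\alpha$, and since also $P \searrow Q$ with $Q\subset P\setminus\alpha\subset P$, Lemma \ref{lem 2.4.0.1} gives $P\setminus\alpha \searrow Q$. Induction on $|P\setminus Q|$ supplies an enumeration $\{\alpha_2,\dots,\alpha_n\}$ of $\Gamma\backslash((P\setminus\alpha)\setminus Q)$ with the beat-point property, and prepending $\alpha_1 = \alpha$ gives the required enumeration for $(P,Q)$: since $P\setminus(\alpha\cup\alpha_2\cup\cdots\cup\alpha_{j-1}) = (P\setminus\alpha)\setminus(\alpha_2\cup\cdots\cup\alpha_{j-1})$, each beat-point condition transfers verbatim.

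It remains to prove the claim, which is the only genuinely non-formal point. Since $P\neq Q$, any $f$ witnessing $P\searrow Q$ satisfies $f(P)\subseteq Q\subsetneq P$, so $f\neq{\rm id}_P$; running along a fence from ${\rm id}_P$ to $f$ inside the finite poset ${\rm Def}_\Gamma(P,Q)$ (equivalently, using the $\Gamma$-equivariant form of Remark \ref{rem 2.4.0.0}), one obtains a $\Gamma$-poset map $g\neq{\rm id}_P$ that fixes $Q$ pointwise and is comparable to ${\rm id}_P$; thus $g\le{\rm id}_P$ or $g\ge{\rm id}_P$. If $g\le{\rm id}_P$, choose $x$ minimal in $P$ with $g(x)<x$; minimality forces $g(z)=z$ for every $z<x$, and then order-preservation gives $z = g(z)\le g(x)$ for all such $z$, so $g(x)=\max P_{<x}$ and $x$ is a lower beat point. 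Since $g$ fixes $Q$ we get $x\notin Q$, and since $\Gamma$ acts by poset automorphisms the orbit $\Gamma x$ consists entirely of (lower) beat points and is disjoint from $Q$ (as $Q$ is $\Gamma$-invariant). The case $g\ge{\rm id}_P$ is dual, taking $x$ maximal with $g(x)>x$ to obtain an upper beat point. The main obstacle is exactly this extraction of a beat point from $g$; once it is in hand the rest is bookkeeping with Lemma \ref{lem 2.4.0.1}, and the parenthetical remark in the statement needs no argument, the enumeration being a purely set-theoretic linear order on the orbit set $\Gamma\backslash(P\setminus Q)$.
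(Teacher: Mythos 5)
Your proof is correct and follows essentially the same route as the paper's: both directions are reduced via Lemma \ref{lem 2.4.0.1} to the key claim that a strong $\Gamma$-collapse $P\searrow Q$ with $P\neq Q$ yields a map in ${\rm Def}_\Gamma(P,Q)$ comparable to but different from ${\rm id}_P$, from which an extremal element of $\{x \mid g(x)\neq x\}$ is extracted as a beat point outside $Q$. You simply spell out the fence argument, the minimal/maximal-element verification, and the passage to the full $\Gamma$-orbit, all of which the paper leaves implicit.
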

\begin{proof}
By Lemma \ref{lem 2.4.0.1}, it suffices to show the following claim: Suppose that a finite poset $P$ strongly $\Gamma$-collapses to an induced subposet $Q$ of $P$ and $P \neq Q$. Then there is a beat point of $P$ not belonging to $Q$.

By the hypothesis, there is a map $f \in {\rm Def}_\Gamma (P,Q)$ such that $f < {\rm id}_P$ or $f > {\rm id}_P$. If $f >{\rm id}_P$, a maximal element of $\{ x \in P \; | \; f(x) > x\}$ is an upper beat point of $P$ not belonging to $Q$. The case $f < {\rm id}_P$ is similarly proved.
\end{proof}

Let $\K$ be a finite simplicial complex. A vertex $v$ of $\K$ is {\it dominated} (see Definition 2.1 of \cite{BM}) if there exists another vertex $w$ which satisfies the following condition: If a simplex $\sigma$ of $\K$ contains $v$, then $\sigma \cup \{ w\}$ is a simplex of $\K$. It is easy to see that if $v$ is dominated in $\K$, then $\K$ strongly collapses to $\K \setminus \Gamma v$. Here $\K \setminus \Gamma v$ is the subcomplex of $\K$ whose simplex is a simplex of $\K$ does not contain an element of $\Gamma v$. On the other hand, we have the following:

\begin{prop} \label{prop 2.4.3}
Let $P$ be a finite $\Gamma$-poset and $Q$ an induced $\Gamma$-subposet of $P$. If $P$ strongly $\Gamma$-collapses to $Q$, then $\Delta(P)$ strongly $\Gamma$-collapses to $\Delta(Q)$.
\end{prop}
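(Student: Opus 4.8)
The plan is to reduce, via Proposition \ref{prop 2.4.1}, to the case in which $P\setminus Q$ is a single orbit of beat points, and then to write down an explicit $\Gamma$-equivariant simplicial multi-map realizing the collapse. First I would record that strong $\Gamma$-collapse of $\Gamma$-simplicial complexes is transitive: if $\K_0$ strongly $\Gamma$-collapses to $\K_1$ and $\K_1$ strongly $\Gamma$-collapses to $\K_2$, then $\K_0$ strongly $\Gamma$-collapses to $\K_2$. This is immediate from the simplicial analogue of Remark \ref{rem 2.4.0.0}: if $({\rm id} = g_0,\dots,g_p)$ and $({\rm id} = h_0,\dots,h_q)$ are the witnessing sequences, then $({\rm id} = g_0,\dots,g_p, h_1\circ g_p,\dots, h_q\circ g_p)$ witnesses the composite, since precomposition with $g_p$ preserves contiguity and $\Gamma$-equivariance, and every map in the sequence fixes $V(\K_2)$ pointwise (each $g_i$ fixes $V(\K_1)\supset V(\K_2)$ and each $h_j$ fixes $V(\K_2)$).

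By Proposition \ref{prop 2.4.1} there is a linear order $\alpha_1,\dots,\alpha_n$ on $\Gamma\backslash(P\setminus Q)$ such that, setting $P_j = P\setminus(\alpha_1\cup\dots\cup\alpha_j)$ (so $P_0 = P$, $P_n = Q$, and each $P_j$ is a $\Gamma$-subposet), each $\alpha_{j+1}$ equals an orbit $\Gamma x_{j+1}$ consisting of beat points of $P_j$. Since $\Delta(P_j\setminus\Gamma x_{j+1}) = \Delta(P_{j+1})$, by the transitivity above it suffices to prove the following claim: \emph{if $x$ is a beat point of a finite $\Gamma$-poset $P$, then $\Delta(P)$ strongly $\Gamma$-collapses to $\Delta(P\setminus\Gamma x)$.}

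To prove the claim, assume $x$ is an upper beat point with $y := \min P_{>x}$ (the lower case is dual). Then every element of $\Gamma x$ is an upper beat point, $\min P_{>\gamma x} = \gamma y$, and $\gamma y\notin\Gamma x$ by Lemma \ref{lem 3.5.1}. Let $f:P\to P$ be the $\Gamma$-equivariant order-preserving closure operator with $f(z) = z$ for $z\notin\Gamma x$ and $f(\gamma x) = \gamma y$, as in the discussion following Lemma \ref{lem 3.5.1}; it is well defined, and $f(P) = P\setminus\Gamma x$, so $\Delta(f):\Delta(P)\to\Delta(P)$ is a $\Gamma$-simplicial map with image in $\Delta(P\setminus\Gamma x)$ fixing every vertex of $\Delta(P\setminus\Gamma x)$. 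To see that $\Delta(f)$ lies in the identity component of ${\rm Def}_\Gamma(\Delta(P),\Delta(P\setminus\Gamma x))$, define $\eta:V(P)\to 2^{V(P)}\setminus\{\emptyset\}$ by $\eta(z) = \{z\}$ for $z\notin\Gamma x$ and $\eta(\gamma x) = \{\gamma x,\gamma y\}$. This $\eta$ is well defined, $\Gamma$-equivariant, and restricts to the identity on $V(P\setminus\Gamma x)$; and it is a simplicial multi-map of $\Delta(P)$ because a chain $\sigma$ of $P$ contains at most one element of $\Gamma x$ (Lemma \ref{lem 3.5.1}), and if $\gamma x\in\sigma$ then every other vertex of $\sigma$ is comparable to $\gamma x$, hence is $\geq \gamma y$ if it exceeds $\gamma x$ and is $\leq \gamma x < \gamma y$ otherwise, so $\bigcup_{v\in\sigma}\eta(v) = \sigma\cup\{\gamma y\}$ is again a chain. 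Thus $\eta\in{\rm Def}_\Gamma(\Delta(P),\Delta(P\setminus\Gamma x))$ with ${\rm id}_{\Delta(P)}\leq\eta\geq\Delta(f)$, so ${\rm id}_{\Delta(P)}$ and $\Delta(f)$ lie in the same component. This proves the claim, and with the two reductions it proves the proposition.

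The routine parts — that $f$ and $\eta$ are well defined, $\Gamma$-equivariant, and order-preserving or simplicial — I would dispatch quickly. The one genuinely delicate point is the chain argument in the last paragraph: it is exactly here that finiteness of $\Gamma$ and Lemma \ref{lem 3.5.1} are used, both to guarantee that a chain meets an orbit in at most one point and that the new vertex $\gamma y$ lies outside the orbit $\Gamma x$; together with the canonical, choice-free definition $y = \min P_{>x}$, this is what forces the construction to be $\Gamma$-equivariant. A secondary point needing care is that only the poset version of transitivity (Lemma \ref{lem 2.4.0.1}) is stated explicitly, which is why I spell out the simplicial version at the outset.
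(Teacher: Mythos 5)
Your proof is correct and follows essentially the same route as the paper: reduce via Proposition \ref{prop 2.4.1} to removing one orbit of beat points at a time, and observe that an upper beat point $\gamma x$ with $\gamma y=\min P_{>\gamma x}$ is a dominated vertex of the order complex. Your explicit multi-map $\eta(\gamma x)=\{\gamma x,\gamma y\}$ and the chain argument are exactly the verification, left implicit in the paper, that ``beat point of $P$ implies dominated in $\Delta(P)$,'' after which the paper invokes the same orbit-removal decomposition.
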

\begin{proof}
If $x$ is a beat point of $P$, then $x$ is dominated in $\Delta (P)$. Thus this proposition follows from Proposition \ref{prop 2.4.1}.
\end{proof}

\begin{prop} \label{prop 2.4.4}
Let $\K$ be a $\Gamma$-simplicial complex and $\L$ an induced $\Gamma$-subcomplex of $\K$. Suppose that $\K$ strongly $\Gamma$-collapses to $\L$. Then $F\K$ strongly collapses to $F\L$.
\end{prop}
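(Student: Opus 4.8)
The plan is to transport the strong collapse along the order-preserving map
$$F : \Map_\Gamma(\K,\K) \longrightarrow {\rm Poset}_\Gamma(F\K,F\K), \qquad \eta \longmapsto \Bigl( \sigma \mapsto \bigcup_{v \in \sigma} \eta(v) \Bigr)$$
that already appeared in the proof of Lemma \ref{lem 2.3.2}. The essential step is to check that $F$ restricts to an order-preserving map
$${\rm Def}_\Gamma(\K,\L) \longrightarrow {\rm Def}_\Gamma(F\K,F\L).$$
Indeed, if $\eta \in {\rm Def}_\Gamma(\K,\L)$, then $\eta$ is $\Gamma$-equivariant, so $F\eta$ is a $\Gamma$-poset map; and if $\sigma$ is a simplex of $\L$, then all of its vertices lie in $V(\L)$, whence $F\eta(\sigma) = \bigcup_{v \in \sigma} \{v\} = \sigma$, so $F\eta$ fixes each point of $F\L$. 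Thus $F\eta \in {\rm Def}_\Gamma(F\K,F\L)$, and the restriction of $F$ is again order-preserving.

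Next I would note that $F$ sends the minimal multi-map $v \mapsto \{v\}$ to ${\rm id}_{F\K}$, and that an order-preserving map of posets carries any connected component into a connected component; hence $F$ maps the identity component of ${\rm Def}_\Gamma(\K,\L)$ into the identity component of ${\rm Def}_\Gamma(F\K,F\L)$. Now the hypothesis that $\K$ strongly $\Gamma$-collapses to $\L$ provides a simplicial map $f$ lying in the identity component of ${\rm Def}_\Gamma(\K,\L)$ with image contained in $\L$. Applying $F$ to $f$ (regarded as the multi-map $v \mapsto \{f(v)\}$) yields the order-preserving map $Ff : F\K \to F\K$, $\sigma \mapsto f(\sigma)$, which therefore lies in the identity component of ${\rm Def}_\Gamma(F\K,F\L)$; and since $f(\K) \subset \L$, its image is contained in $F\L$. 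This is exactly the statement that $F\K$ strongly $\Gamma$-collapses to $F\L$, which in particular gives the assertion.

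I do not expect a genuine obstacle here; the argument is a bookkeeping verification that the functor $F$ on simplicial multi-maps respects the two defining constraints of ${\rm Def}_\Gamma$, namely $\Gamma$-equivariance and the condition of fixing the subcomplex pointwise. The only point requiring a modicum of care is that $F\eta$ is well defined as an object of ${\rm Poset}_\Gamma(F\K,F\K)$ — i.e.\ that each value $F\eta(\sigma)$ is indeed a simplex of $\K$ and that $F\eta$ is order-preserving and equivariant — but this was already established in the proof of Lemma \ref{lem 2.3.2}, so it can simply be invoked.
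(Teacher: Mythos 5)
Your proposal is correct and is exactly the paper's argument: the paper's proof simply observes that the order-preserving map $F$ from the proof of Lemma \ref{lem 2.3.2} carries ${\rm Def}_\Gamma(\K,\L)$ into ${\rm Def}_\Gamma(F\K,F\L)$, and you have spelled out the same verification (preservation of the fixing condition, of the identity, of connected components, and of the image condition) in full detail.
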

\begin{proof}
It suffices to note that the order-preserving map
$$F :\Map_\Gamma (\K,\L) \longrightarrow {\rm Poset}_\Gamma (F\K,F\L)$$
in the proof of Lemma \ref{lem 2.3.2} maps ${\rm Def}_\Gamma(\K,\L)$ to ${\rm Def}_\Gamma (F\K,F\L)$.
\end{proof}

Let $\K$ be a simplicial complex. The simplicial complex $\Delta(F\K)$ is called the {\it barycentric subdivision of $\K$}, and is denoted by $\Sd(\K)$.

\begin{cor} \label{cor 2.4.5}
Let $\K$ be a finite $\Gamma$-simplicial complex and $\L$ a $\Gamma$-subcomplex of $\K$. If $\K$ strongly $\Gamma$-collapses to $\L$, then $\Sd(\K)$ strongly $\Gamma$-collapses to $\Sd(\L)$.
\end{cor}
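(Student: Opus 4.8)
\noindent\emph{Proof proposal.} The plan is to deduce the corollary formally from the two preceding propositions together with the defining identity $\Sd(\K) = \Delta(F\K)$. First I would observe that, since $\L$ is an induced $\Gamma$-subcomplex of $\K$, its face poset $F\L$ is an induced $\Gamma$-subposet of $F\K$: a non-empty simplex of $\K$ belongs to $\L$ exactly when, viewed as an element of $F\K$, it lies in $F\L$, and the inclusion order on $F\L$ is the one restricted from $F\K$. Moreover $F\K$ is a finite $\Gamma$-poset because $\K$ is a finite $\Gamma$-simplicial complex.

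Granting this, the argument is a short composition. Applying Proposition \ref{prop 2.4.4} to the hypothesis that $\K$ strongly $\Gamma$-collapses to $\L$ gives that $F\K$ strongly $\Gamma$-collapses to $F\L$. Then applying Proposition \ref{prop 2.4.3} with $P = F\K$ and $Q = F\L$ (both finite $\Gamma$-posets, with $Q$ an induced $\Gamma$-subposet of $P$ by the previous paragraph) yields that $\Delta(F\K)$ strongly $\Gamma$-collapses to $\Delta(F\L)$. Since $\Sd(\K) = \Delta(F\K)$ and $\Sd(\L) = \Delta(F\L)$ by the definition of the barycentric subdivision, this is precisely the assertion that $\Sd(\K)$ strongly $\Gamma$-collapses to $\Sd(\L)$.

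I do not expect a genuine obstacle here, as the statement is a direct consequence of Propositions \ref{prop 2.4.4} and \ref{prop 2.4.3}. The only points needing (routine) verification are that $F\L$ really does sit inside $F\K$ as an \emph{induced} $\Gamma$-subposet, so that Proposition \ref{prop 2.4.3} applies verbatim, and that the finiteness hypothesis required by that proposition is inherited from the finiteness of $\K$.
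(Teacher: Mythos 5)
Your proof is correct and is exactly the argument the paper intends (the corollary is stated without proof precisely because it is the composition of Proposition \ref{prop 2.4.4} followed by Proposition \ref{prop 2.4.3} applied to $P=F\K$, $Q=F\L$, using $\Sd = \Delta\circ F$). The routine checks you flag — that $F\L$ is an induced $\Gamma$-subposet of $F\K$ and that finiteness of $\K$ gives finiteness of $F\K$ — are the right ones and pose no difficulty.
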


%We conclude this subsection with the following proposition:

%\begin{prop} \label{prop 2.4.5.1}
%Suppose that a $\Gamma$-simplicial complex $\K$ strongly $\Gamma$-collapses to a subcomplex $\L$ of $\K$. Let $f : \L \rightarrow \Y$ be a simplicial map, and $\X$ the pushout $\Y \cup_{\L} \K$. Then $\X$ strongly $\Gamma$-collapses to $\Y$.
%\end{prop}
%\begin{proof}
%Let $u : \K \rightarrow \X$ be the natural map. Define $\Phi : {\rm Def}_\Gamma (\K,\L) \rightarrow {\rm Def}_\Gamma (\X,\Y)$ by
%$$\Phi(\eta)(v) = \begin{cases}
%\{ v\} & (v \in V(\Y))\\
%u(\eta(v)) & (v \in V(\L)).
%\end{cases}$$
%It is easy to see that this map $\Phi$ is well-defined and order-preserving.
%\end{proof}

\subsection{$\times$-homotopy deformation retract}

In this section, we introduce the $\times$-homotopy deformation retract of graphs and prove Lemma \ref{lem 3.4.7}.

Let $G$ be a graph and $H$ an induced subgraph of $G$. Define the poset ${\rm Def}(G,H)$ to be the induced subposet of $\Hom(G,G)$ consisting of the multi-homomorphisms $\eta$ such that $\eta(w) = \{ w\}$ for every vertex $w$ of $H$. $H$ is a {\it $\times$-homotopy deformation retract of $G$} if there is a graph homomorphism $f$ belonging to the identity component of ${\rm Def}(G,H)$ such that $f(v) \in V(H)$ for every $v \in V(G)$.

Let $G$ and $H$ be left $\Gamma$-graphs. A multi-homomorphism $\eta$ from $G$ to $H$ is $\Gamma$-equivariant if $\gamma (\eta(v)) = \eta(\gamma v)$ for every $v \in V(G)$ and $\gamma \in \Gamma$. We let $\Hom_{\Gamma}(G,H)$ be the induced subposet of $\Hom(G,H)$ consisting of $\Gamma$-equivariant multi-homomorphisms. Then we have a poset map
$$Q :\Hom_{\Gamma}(G,H) \longrightarrow \Hom(\Gamma \backslash G, \Gamma \backslash H)$$
defined as follows: For a vertex $v \in V(G)$, $Q(\eta)(\Gamma v) = \{ \Gamma w \; | \; w \in \eta(v)\}$. Then we have the following proposition. For the definitions of $\A (\K)$ and $\A_T(\K)$, see the end of Section 3.1.

\begin{prop} \label{prop 4.3.1}
Let $(\K,\L)$ be a pair of $\Gamma$-simplicial complexes such that $\L$ is $\K$ strongly $\Gamma$-collapses to $\L$, and let $T$ be a right $\Gamma$-graph. Then $\A_T(\L)$ is a $\times$-homotopy deformation retract of $\A_T(\K)$.
\end{prop}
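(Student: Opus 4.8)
The plan is to transport a strong $\Gamma$-collapse of simplicial complexes into a $\times$-homotopy deformation retract of graphs through the functor $\A$ and the quotient-by-$T$ construction. First I would analyze the effect of $\A$ on beat points and dominated vertices. By Proposition \ref{prop 2.4.1}, the hypothesis that $\K$ strongly $\Gamma$-collapses to $\L$ gives a finite sequence of orbits of dominated vertices, so by an obvious induction it suffices to treat one step: suppose $\K \setminus \Gamma v$ is the result of deleting the orbit of a single dominated vertex $v$, with $w$ dominating $v$. I would check directly from the definition of $\A(\K)$ (two vertices adjacent iff they span an edge of $\K$) that the relation "$v$ is dominated by $w$ in $\K$" forces $N_{\A(\K)}(v) \subseteq N_{\A(\K)}(w) \cup \{w\}$, i.e. essentially that $v$ is a dismantlable-type vertex of $\A(\K)$; more precisely, the map $\A(\K) \to \A(\K)$ sending $v \mapsto w$ (and $\gamma v \mapsto \gamma w$) and fixing everything else is a graph homomorphism comparable to the identity in $\Hom_\Gamma(\A(\K),\A(\K))$, by Lemma \ref{lem 3.5.1} applied to the $\Gamma$-poset at hand. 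This realizes $\A(\K \setminus \Gamma v)$ as a $\Gamma$-equivariant $\times$-homotopy deformation retract of $\A(\K)$.

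Next I would promote this to the graphs $\A_T(\K) = \Gamma \backslash (T \times \A(\K))$. The key observation is that the retracting multi-homomorphism just constructed lives in ${\rm Def}_\Gamma(\A(\K),\A(\L))$, hence $T \times (-)$ applied to it gives an element of $\Hom_\Gamma(T \times \A(\K), T \times \A(\K))$ that fixes $T \times \A(\L)$ pointwise and is comparable to the identity (the product of a multi-homomorphism with the identity on $T$ is again a multi-homomorphism, and comparability is preserved). Applying the quotient map $Q : \Hom_\Gamma(T \times \A(\K), T \times \A(\K)) \to \Hom(\A_T(\K), \A_T(\K))$ from the paragraph preceding the statement, and noting $Q$ is order-preserving and sends the identity to the identity, I obtain an element of ${\rm Def}(\A_T(\K), \A_T(\L))$ comparable to ${\rm id}_{\A_T(\K)}$ whose image lands in $\A_T(\L)$. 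That is exactly the assertion that $\A_T(\L)$ is a $\times$-homotopy deformation retract of $\A_T(\K)$. Chaining the single-step conclusions along the sequence of orbits from Proposition \ref{prop 2.4.1}, and using that "is a $\times$-homotopy deformation retract of" composes (via concatenating paths in the relevant ${\rm Def}$ posets, as in Remark \ref{rem 2.4.0.0}), completes the argument.

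The main obstacle I anticipate is verifying carefully that the passage from a dominated vertex of $\K$ to an exploitable structure in $\A(\K)$ really works $\Gamma$-equivariantly and that the resulting map is genuinely \emph{comparable} to the identity in the poset $\Hom_\Gamma$, not merely in the same connected component after several moves. In particular one must rule out that an element $\gamma v$ of the orbit of $v$ coincides with the dominating vertex $w$ or interferes with the adjacency relations; this is precisely where Lemma \ref{lem 3.5.1} (using finiteness of $\Gamma$) is needed, ensuring the candidate retraction map $v \mapsto w$ extended $\Gamma$-equivariantly is well defined and comparable to the identity. A secondary technical point is to confirm that $Q$ preserves the identity component of ${\rm Def}_\Gamma$, which follows since $Q$ is order-preserving and a path in ${\rm Def}_\Gamma(T\times\A(\K), T\times\A(\L))$ joining ${\rm id}$ to the retraction maps under $Q$ to such a path for $\A_T(\K)$.
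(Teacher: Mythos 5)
Your second paragraph is, in essence, the paper's proof: the paper defines the order-preserving composite
$$\Map_{\Gamma}(\K,\K)\xrightarrow{\ \Phi\ }\Hom_{\Gamma}(\A(\K),\A(\K))\xrightarrow{\ T\times(-)\ }\Hom_{\Gamma}(T\times\A(\K),T\times\A(\K))\xrightarrow{\ Q\ }\Hom(\A_T(\K),\A_T(\K))$$
and simply observes that it carries ${\rm Def}_{\Gamma}(\K,\L)$ into ${\rm Def}(\A_T(\K),\A_T(\L))$, hence carries the witnessing element of the identity component (and the whole path to it) to a witness for $\A_T(\L)$ being a $\times$-homotopy deformation retract of $\A_T(\K)$. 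Your observation that $Q$ and $T\times(-)$ are order-preserving, fix the identity, and therefore preserve identity components is exactly the point.

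The problem is your first paragraph. You invoke Proposition \ref{prop 2.4.1} to decompose the strong $\Gamma$-collapse of $\K$ to $\L$ into elementary removals of orbits of dominated vertices, but that proposition is stated for finite $\Gamma$-\emph{posets} and beat points; its simplicial-complex analogue (that every strong $\Gamma$-collapse factors as a sequence of elementary ones) is neither stated nor proved in the paper, and Proposition \ref{prop 4.3.1} does not assume $\K$ finite, so the decomposition is not even available in the stated generality. This detour is also unnecessary: the map $\Phi:\eta\mapsto(v\mapsto\eta(v))$ is defined on all of $\Map_{\Gamma}(\K,\K)$, so you can feed the entire witnessing element (and connecting path) of ${\rm Def}_{\Gamma}(\K,\L)$ through the composite at once, with no case analysis of dominated vertices. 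A minor further imprecision: two distinct graph homomorphisms are minimal points of $\Hom$ and hence never \emph{comparable}; the correct statement is that consecutive maps admit a common upper bound (are contiguous), as in Remark \ref{rem 2.4.0.0}. I recommend deleting the reduction and running your second paragraph on arbitrary elements of ${\rm Def}_{\Gamma}(\K,\L)$; then your argument coincides with the paper's and is complete.
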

\begin{proof}
Consider the map
$$\Phi : \Map_{\Gamma} (\K,\K) \longrightarrow \Hom_{\Gamma}(\A(\K), \A(\K)), \; \eta \longmapsto (v \mapsto \eta(v)).$$
It is easy to see that $\Phi$ is well-defined and order-preserving. Next define the map $T \times ( - ) : \Hom_{\Gamma}(\A(\K),\A(\K)) \rightarrow \Hom_{\Gamma}(T \times \A(\K) , T \times \A(\K))$ by
$$(T \times \eta)(x,v) = \{ x\} \times \eta (v).$$
Thus we have a sequence
$$\begin{CD}
{\rm Map}_{\Gamma}(\K,\K) @>{\Phi}>> \Hom_{\Gamma}(\A(\K), \A(\K))\\
@>{T \times (-)}>> \Hom_\Gamma (T \times \A(\K), T \times \A(\K))\\
@>Q>> \Hom(\A_T(\K), \A_T(\K))
\end{CD}$$
of order-preserving maps, where the last map $Q$ is described in the previous paragraph of this proposition. The composition of the above sequence maps ${\rm Def}_{\Gamma}(\K,\L)$ to ${\rm Def}(\A_T(\K), \A_T(\L))$. Thus the proposition follows.
\end{proof}

\noindent {\it Proof of Lemma \ref{lem 3.4.7}}.
Note that
$$\hat{A}_T ((\Gamma / \Gamma') \times \Lambda_r[n]) = A_T \circ {\rm Sd}^k((\Gamma / \Gamma') \times \Lambda_r [n]) \cong \A_T \circ {\Sd}^k ((\Gamma / \Gamma') \times \Lam_r^n).$$
Since any vertex of $(\Gamma / \Gamma') \times \mathsf{\Lambda}^r_n$ other than elements of $(\Gamma / \Gamma') \times \{ r\}$ is dominated, $(\Gamma / \Gamma') \times \Lam_r^n$ strongly $\Gamma$-collapses to $\Gamma / \Gamma'$. Thus $\A_T \circ \Sd^k (\Gamma / \Gamma')$ is a $\times$-deformation retract of $\A_T \circ \Sd^k((\Gamma / \Gamma') \times \Lam_r^n)$ (Proposition \ref{prop 4.3.1}). The proof of (2) is similar.

We now show (3). Consider the diagram
$$\begin{CD}
(\Gamma /\Gamma') @>{\simeq_{\Gamma}}>> (\Gamma / \Gamma') \times \Lambda_r[n]\\
@VVV @VVV\\
\hat{S}_T \circ \hat{A}_T(\Gamma / \Gamma') @>>> \hat{S}_T \circ \hat{A}_T ((\Gamma / \Gamma') \times \Lambda_r[n]),
\end{CD}$$
where the vertical arrows are the unit maps. Since $T$ satisfies the condition (A), the left vertical arrow is a $\Gamma$-weak equivalence. By Lemma \ref{lem 2.2.1} and (1) of this proposition, the lower horizontal arrow is a $\Gamma$-weak equivalence. The proof of (4) is similar.
\qed

\vspace{2mm} We need the following assertion later:

\begin{lem} \label{lem x-pushout}
Let $H$ be a $\times$-deformation retract of a graph $G$, and $f : H \rightarrow Y$ a graph homomorphism. Let $X$ be the pushout $G \cup_f Y$. If $H$ is a $\times$-homotopy deformation retract of $G$, then $Y$ is a $\times$-deformation retract of $X$.
\end{lem}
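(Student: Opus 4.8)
The plan is to transport the retracting data along the canonical map $q \colon G \sqcup Y \to X$ associated with the pushout $X = G \cup_f Y$. First I would fix a concrete model of $X$. Since the vertex-set functor $\mathcal{G} \to {\bf Set}$ preserves colimits, $V(X) = \big(V(G) \setminus V(H)\big) \sqcup V(Y)$, with each $v \in V(H)$ identified with $f(v) \in V(Y)$, and a direct verification of the universal property shows that $E(X)$ is precisely the union of the $q$-images of $E(G)$ and of $E(Y)$. A short separate argument, using that $H$ is an \emph{induced} subgraph of $G$ and that $f$ is a graph homomorphism, shows that $Y$ is an induced subgraph of $X$; in particular the poset ${\rm Def}(X,Y)$ and the notion ``$\times$-homotopy deformation retract of $X$'' make sense here.

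By hypothesis there is a finite sequence ${\rm id}_G = \eta_0, \eta_1, \dots, \eta_n = r$ in ${\rm Def}(G,H)$ with consecutive terms comparable in $\Hom(G,G)$ and with $r$ a graph homomorphism satisfying $r(V(G)) \subseteq V(H)$. For each $i$ I would define $\bar\eta_i \colon V(X) \to 2^{V(X)} \setminus \{\emptyset\}$ by $\bar\eta_i(v) = \{v\}$ for $v \in V(Y)$ and $\bar\eta_i(v) = q(\eta_i(v))$ for $v \in V(G) \setminus V(H)$; the two clauses are compatible because those subsets partition $V(X)$. The one point that needs care, and the step I expect to be the main obstacle, is checking that $\bar\eta_i$ is a multi-homomorphism. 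The key observation is that $\eta_i$ fixes $V(H)$ pointwise, so $q(\eta_i(w)) = \{q(w)\}$ for every $w \in V(H)$; hence, for an edge of $X$ of the form $(q(a), q(b))$ with $(a,b) \in E(G)$, one has $\bar\eta_i(q(a)) = q(\eta_i(a))$ and $\bar\eta_i(q(b)) = q(\eta_i(b))$ irrespective of whether $a$ or $b$ lies in $H$, and then $\eta_i(a) \times \eta_i(b) \subseteq E(G)$ together with the fact that $q$ is a graph homomorphism gives $\bar\eta_i(q(a)) \times \bar\eta_i(q(b)) \subseteq E(X)$. For an edge of $X$ coming from $E(Y)$ both values are singletons of adjacent vertices, so the check is immediate. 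By construction $\bar\eta_i$ fixes $V(Y)$ pointwise, i.e. $\bar\eta_i \in {\rm Def}(X,Y)$.

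It remains to assemble the fence. Since $S \mapsto q(S)$ is monotone for inclusion of subsets, comparability of $\eta_{i-1}$ and $\eta_i$ in $\Hom(G,G)$ passes to comparability of $\bar\eta_{i-1}$ and $\bar\eta_i$ in $\Hom(X,X)$, so $\bar\eta_0, \dots, \bar\eta_n$ is a fence in ${\rm Def}(X,Y)$. One has $\bar\eta_0 = {\rm id}_X$, since ${\rm id}_G$ and the identity of $Y$ glue to ${\rm id}_X$, and $\bar\eta_n = \bar r$ is single-valued (being $q \circ r$) with $\bar r(V(X)) \subseteq q(V(H)) \cup V(Y) = V(Y)$. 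Thus $\bar r$ is a graph homomorphism of $X$ into $Y$ lying in the identity component of ${\rm Def}(X,Y)$, which is exactly what it means for $Y$ to be a $\times$-homotopy deformation retract of $X$. Apart from the multi-homomorphism verification everything is bookkeeping, and even that becomes transparent once one exploits that $\eta_i$ restricts to the identity on $V(H)$.
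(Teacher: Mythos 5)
Your proposal is correct and follows essentially the same route as the paper: the paper defines the order-preserving map $\Phi : {\rm Def}(G,H) \rightarrow {\rm Def}(X,Y)$ by exactly your formula ($\{v\}$ on $V(Y)$, $u(\eta(v))$ on the remaining vertices of $G$) and observes it carries the identity component to the identity component. You merely unwind the paper's ``it is easy to see that $\Phi$ is well-defined and order-preserving'' into an explicit fence argument, with the same key observation that $\eta_i$ fixes $V(H)$ pointwise.
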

\begin{proof}
Let $u : G \rightarrow X$ be the natural map. Define $\Phi : {\rm Def}(G,H) \rightarrow {\rm Def}(X,Y)$ by
$$\Phi(\eta)(v) = \begin{cases}
\{ v\} & (v \in V(Y)) \; \\
u(\eta (v)) & (v \in V(G)).
\end{cases}$$
It is easy to see that this map $\Phi$ is well-defined and order-preserving.
\end{proof}

\subsection{$r$-NDR of $\Gamma$-simplicial complex}

In this section, we introduce the notion of $r$-NDR's of $\Gamma$-simplicial complexes. A {\it pair of $\Gamma$-simplicial complexes} is a pair $(\K,\L)$ consisting of a $\Gamma$-simplicial complex $\K$ together with a $\Gamma$-subcomplex $\L$ of $\K$.

Let $\K$ be a simplicial complex. Recall that the star of $v \in V(\K)$ is the subcomplex of $\K$ defined by
$${\rm st}_{\K}(v) = \{ \sigma \in \K \; | \; \sigma \cup \{ v \} \in \K\}.$$

\begin{dfn}
Let $\K$ be a finite $\Gamma$-simplicial complex and $\L$ a $\Gamma$-subcomplex of $\K$. The {\it neighborhood of $\L$ in $\K$} is the $\Gamma$-subcomplex
$$\nu_{\K}(\L) = \nu(\L) = \bigcup_{v \in V(\L)} {\rm st}_{\K}(v) \subset \K.$$
For a positive integer $r$, define the {\it $r$-neighborhood $\nu^r(\L)$} inductively by $\nu^1(\L) = \nu(\L)$, and $\nu^{s+1}(\L) = \nu(\nu^s(\L))$.

A pair of $\Gamma$-simplicial complexes $(\K,\L)$ is an {\it $r$-NDR pair} if there exists a $\Gamma$-subcomplex $\A$ of $\K$ containing $\nu^r(\L)$ such that $\A$ strongly $\Gamma$-collapses to $\L$.
\end{dfn}

\begin{prop} \label{prop 2.4.6}
If $(\K,\L)$ is an $r$-NDR pair of finite $\Gamma$-simplicial complexes, then the pair $(\Sd(\K), \Sd(\L))$ is a $(2r)$-NDR pair.
\end{prop}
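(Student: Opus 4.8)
The plan is to take $\Sd(\A)$ as the $\Gamma$-subcomplex of $\Sd(\K)$ witnessing that $(\Sd(\K),\Sd(\L))$ is a $(2r)$-NDR pair, where $\A$ is a $\Gamma$-subcomplex of $\K$ with $\nu^r(\L)\subseteq \A$ that strongly $\Gamma$-collapses to $\L$ (such an $\A$ exists since $(\K,\L)$ is an $r$-NDR pair). A $\Gamma$-equivariant simplicial inclusion induces a $\Gamma$-equivariant simplicial inclusion between barycentric subdivisions, so $\Sd(\A)$ is a $\Gamma$-subcomplex of $\Sd(\K)$ containing $\Sd(\L)$, and Corollary~\ref{cor 2.4.5} shows that $\Sd(\A)$ strongly $\Gamma$-collapses to $\Sd(\L)$. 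Thus the whole statement reduces to the inclusion $\nu^{2r}_{\Sd(\K)}(\Sd(\L))\subseteq \Sd(\A)$, and since $\nu^r(\L)\subseteq \A$ forces $\Sd(\nu^r_{\K}(\L))\subseteq \Sd(\A)$, it suffices to prove the purely combinatorial inclusion $\nu^{2r}_{\Sd(\K)}(\Sd(\L))\subseteq \Sd\big(\nu^r_{\K}(\L)\big)$.

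I would derive this from the general claim that
$$\nu^{2s}_{\Sd(\K)}(\Sd(\mathsf{M}))\subseteq \Sd\big(\nu^s_{\K}(\mathsf{M})\big)$$
for every $\Gamma$-subcomplex $\mathsf{M}$ of $\K$ and every integer $s\geq 1$. Using the evident monotonicity of $\nu$ (if $\mathcal{C}\subseteq\mathcal{C}'$ then $\nu(\mathcal{C})\subseteq\nu(\mathcal{C}')$), an induction on $s$ reduces this to the base case $s=1$: applying $\nu^{2}_{\Sd(\K)}(-)$ to the inclusion $\nu^{2s}_{\Sd(\K)}(\Sd(\mathsf{M}))\subseteq \Sd(\nu^s_{\K}(\mathsf{M}))$ and then the base case with $\mathsf{M}$ replaced by $\nu^s_{\K}(\mathsf{M})$ gives $\nu^{2(s+1)}_{\Sd(\K)}(\Sd(\mathsf{M}))\subseteq \Sd(\nu^{s+1}_{\K}(\mathsf{M}))$.

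For the base case $\nu^{2}_{\Sd(\K)}(\Sd(\mathsf{M}))\subseteq \Sd(\nu_{\K}(\mathsf{M}))$ I would argue vertexwise. A vertex of $\Sd(\K)$ is a non-empty simplex of $\K$, and two vertices of $\Sd(\K)$ are joined by an edge precisely when those simplices are comparable under inclusion; hence a simplex $\rho$ of $\K$ is a vertex of $\nu^{2}_{\Sd(\K)}(\Sd(\mathsf{M}))$ precisely when there are a non-empty simplex $\tau$ of $\K$ and a non-empty simplex $\sigma$ of $\mathsf{M}$ with $\rho$ comparable to $\tau$ and $\tau$ comparable to $\sigma$. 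In each of the four cases given by the directions of the two inclusions one produces a vertex $v\in V(\mathsf{M})$ with $\rho\cup\{v\}\in\K$: take $v\in\sigma$ if $\sigma\subseteq\tau$ and $v\in\tau$ if $\tau\subseteq\sigma$; in either case $v\in\sigma$, which is a simplex of $\mathsf{M}$, so $v\in V(\mathsf{M})$, and $\rho\cup\{v\}\in\K$ because either $\tau\subseteq\rho$, so $v\in\rho$ and $\rho\cup\{v\}=\rho$, or $\rho\subseteq\tau$, so $\rho\cup\{v\}$ (indeed $\rho\cup\sigma$, when $v\in\sigma$) is contained in the simplex $\tau$. Thus $\rho\in\nu_{\K}(\mathsf{M})$. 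Since every simplex of the subcomplex $\nu^{2}_{\Sd(\K)}(\Sd(\mathsf{M}))$ is a chain of simplices of $\K$ each of which is one of its vertices, hence lies in $\nu_{\K}(\mathsf{M})$, such a chain is a simplex of $\Sd(\nu_{\K}(\mathsf{M}))$, which is the required inclusion.

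The step I expect to demand the most care is this four-case check — in particular, keeping straight which member of a configuration such as $\sigma\subseteq\tau\subseteq\rho$ or $\rho\subseteq\tau\subseteq\sigma$ (or the two mixed ones) supplies a vertex of $V(\mathsf{M})$ witnessing $\rho\in\nu_{\K}(\mathsf{M})$, and using that $\rho\cup\sigma$ is a simplex of $\K$ whenever both $\rho$ and $\sigma$ lie below $\tau$. Once that is settled, the $\Gamma$-equivariance and the passage from vertices to arbitrary simplices are formal, and the collapsing half of the statement is entirely subsumed by Corollary~\ref{cor 2.4.5}.
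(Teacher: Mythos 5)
Your proposal is correct and follows essentially the same route as the paper: reduce everything to the combinatorial inclusion $\nu^{2r}_{\Sd(\K)}(\Sd(\L))\subseteq\Sd(\nu^r_{\K}(\L))$, establish the case $r=1$ (the paper's Lemma~\ref{lem 2.4.7}) by exploiting that edges of $\Sd(\K)$ are comparable pairs of simplices, and finish with Corollary~\ref{cor 2.4.5} for the strong collapse of $\Sd(\A)$ onto $\Sd(\L)$. Your vertexwise four-case check and the explicit monotonicity induction on $s$ are sound and in fact spell out the iteration step that the paper's proof leaves implicit.
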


To prove Proposition \ref{prop 2.4.6}, we use the following lemma:

\begin{lem} \label{lem 2.4.7}
Let $(\K,\L)$ be a pair of simplicial complexes. Then we have
$$\nu^2(\Sd(\L)) \subset \Sd(\nu(\L)).$$
\end{lem}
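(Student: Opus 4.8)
The plan is to unravel the definitions and reduce everything to the description of the barycentric subdivision as the order complex of the face poset. Recall that $\Sd(\K) = \Delta(F\K)$, so a simplex of $\Sd(\K)$ is a chain $\sigma_0 \subsetneq \cdots \subsetneq \sigma_m$ of non-empty simplices of $\K$, and $\Sd(\L)$ is the full subcomplex of $\Sd(\K)$ spanned by those $\sigma_i$ which are simplices of $\L$. All the neighborhoods $\nu(\Sd(\L))$ and $\nu^2(\Sd(\L))$ are computed inside $\Sd(\K)$, whereas $\nu(\L)$ is computed inside $\K$; keeping these three ambient complexes apart is the only real piece of bookkeeping in the argument.

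The first step I would carry out is a purely local observation: \emph{every vertex $w$ of the subcomplex $\nu(\Sd(\L)) \subset \Sd(\K)$ contains a vertex of $\L$.} By definition such a $w$ is a non-empty simplex of $\K$ lying in ${\rm st}_{\Sd(\K)}(\sigma)$ for some non-empty simplex $\sigma$ of $\L$, which means $\{w, \sigma\}$ is a chain in $F\K$, i.e.\ $w$ and $\sigma$ are comparable under inclusion. If $w \subset \sigma$, then $w$ is itself a non-empty simplex of $\L$, so all its vertices lie in $V(\L)$; if $\sigma \subset w$, then any vertex of the non-empty simplex $\sigma$ lies in $w \cap V(\L)$. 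In either case $w \cap V(\L) \neq \emptyset$.

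The main step is then the following. Let $C = \{\sigma_0 \subsetneq \cdots \subsetneq \sigma_m\}$ be a simplex of $\nu^2(\Sd(\L)) = \nu(\nu(\Sd(\L)))$. By definition there is a vertex $w$ of $\nu(\Sd(\L))$ with $C \cup \{w\}$ a chain in $F\K$; using the observation, fix a vertex $v \in w \cap V(\L)$. For each $i$, the simplices $\sigma_i$ and $w$ are comparable: if $w \subset \sigma_i$ then $v \in \sigma_i$, so $\sigma_i \cup \{v\} = \sigma_i \in \K$; if $\sigma_i \subset w$ then $\sigma_i \cup \{v\} \subset w \in \K$, so $\sigma_i \cup \{v\} \in \K$ as well. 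In both cases $\sigma_i \in {\rm st}_{\K}(v) \subset \nu(\L)$. Hence $C$ is a chain of non-empty simplices of $\nu(\L)$, that is, a simplex of $\Sd(\nu(\L)) = \Delta(F(\nu(\L)))$. This is exactly the inclusion $\nu^2(\Sd(\L)) \subset \Sd(\nu(\L))$.

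I do not expect a serious obstacle here. The only step that is not a formal manipulation of definitions is the observation of the second paragraph, and the point to appreciate is that one genuinely needs the chosen element $v$ of $w$ to lie in $V(\L)$, so that adjoining $v$ to any simplex comparable with $w$ keeps us inside $\K$; this is precisely why \emph{two} applications of $\nu$ are the right number, matching the way the lemma telescopes inside the proof of Proposition \ref{prop 2.4.6}. Degenerate cases --- $C$ empty, or $w$ already appearing among the $\sigma_i$ --- are subsumed by the same argument and need no separate treatment.
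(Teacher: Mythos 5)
Your proof is correct and follows essentially the same route as the paper's: unwind the two applications of $\nu$ into comparability relations in $F\K$, extract a vertex $v\in V(\L)$ lying in the intermediate simplex, and conclude that every member of the chain lies in ${\rm st}_{\K}(v)\subset\nu(\L)$. The only cosmetic difference is that the paper checks membership in ${\rm st}_\K(v)$ only for the maximum of the chain and then invokes closure under faces, whereas you check each $\sigma_i$ directly; this changes nothing of substance.
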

\begin{proof}
Recall that a simplex of $\Sd (\K) = \Delta(F\K)$ is a chain of the face poset of $\K$. Let $c$ be a simplex of $\nu^2(\Sd(\L)) \subset \Sd(\K)$. By the definition of $\nu^2$, there is a vertex $\sigma$ of $\nu(\Sd(\L))$ such that $c \in {\rm st}_{\Sd (\K)}(\sigma)$, namely, $c \cup \{ \sigma\}$ is a chain of $F\K$. Since $\sigma \in V(\nu (\Sd(\L)))$, there is $\tau \in V(\Sd(\L))$ such that $\{\sigma \} \in {\rm st}_{{\rm Sd}(K)}(\tau)$, namely, $\{ \sigma, \tau\}$ is a chain in $F \K$.
%If $\sigma \subset \tau$, then $\sigma \in F\L = V(\Sd (\L))$ and $c \in {\rm st}_{\Sd(\K)}(\sigma) \subset \Sd(\nu(\L))$.
Then the maximum $\sigma'$ of $c \cup \{ \sigma \}$ contains some element $v$ of $\tau$, and hence we have $\sigma' \in {\rm st}_{\K}(v) \subset \nu(\L)$. Therefore every element of $c \cup \{ \sigma\}$ belongs to $\nu(\L)$, and hence $c \subset c \cup \{ \sigma\} \in \Sd(\nu(\L))$.
\end{proof}

\noindent {\it Proof of Proposirion \ref{prop 2.4.6}.} Suppose that $(\K,\L)$ is an $r$-NDR pair of $\Gamma$-simplicial complexes, and let $\A$ be a $\Gamma$-subcomplex of $\K$ containing $\nu^r(\L)$ such that $\A$ strongly $\Gamma$-collapses to $\L$. By Lemma \ref{lem 2.4.7}, we have
$$\nu^{2r}(\Sd(\L)) \subset \Sd(\nu^r(\L)) \subset \Sd(\A).$$
Corollary \ref{cor 2.4.5} implies that $\Sd(\A)$ strongly $\Gamma$-collapses to $\Sd(\L)$. Therefore the pair $(\Sd(\K), \Sd(\L))$ is a $(2r)$-NDR pair.
\qed

\begin{thm} \label{thm 2.4.8}
Let $\K$ be a finite $\Gamma$-simplicial complex and $\L$ a $\Gamma$-subcomplex of $\K$. Then the pair $(\Sd^2(\K), \Sd^2(\L))$ is a 1-NDR pair.
\end{thm}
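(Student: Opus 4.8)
The plan is to reduce Theorem~\ref{thm 2.4.8} to a statement about \emph{full} subcomplexes. Recall that a subcomplex $\L$ of $\K$ is full if every simplex of $\K$ whose vertices all lie in $V(\L)$ is itself a simplex of $\L$. The first step I would carry out is the easy observation that for \emph{any} pair $(\K,\L)$ the subcomplex $\Sd\L$ is full in $\Sd\K$: a simplex of $\Sd\K=\Delta(F\K)$ is a chain in $F\K$, and if all its members are nonempty simplices of $\L$ then it is a chain in $F\L$, hence a simplex of $\Sd\L$. Since $\Sd^2\K=\Sd(\Sd\K)$ and $\Sd^2\L=\Sd(\Sd\L)$, the theorem then follows from the following key assertion, applied to the pair $(\Sd\K,\Sd\L)$ in place of $(\K,\L)$: \emph{if $\L$ is a full $\Gamma$-subcomplex of a finite $\Gamma$-simplicial complex $\K$, then $\nu_{\Sd\K}(\Sd\L)$ strongly $\Gamma$-collapses to $\Sd\L$.} Granting this, one takes $\A=\nu^1(\Sd^2\L)=\nu_{\Sd^2\K}(\Sd^2\L)$, which trivially contains $\nu^1(\Sd^2\L)$ and strongly $\Gamma$-collapses to $\Sd^2\L$, so $(\Sd^2\K,\Sd^2\L)$ is a $1$-NDR pair.

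To prove the key assertion I would first describe $\nu:=\nu_{\Sd\K}(\Sd\L)$ combinatorially. Since $\L$ is full, for a nonempty simplex $\sigma$ of $\K$ the set $\sigma\cap V(\L)$, if nonempty, is a simplex of $\L$ comparable to $\sigma$ in $F\K$; and no simplex disjoint from $V(\L)$ is comparable to a nonempty simplex of $\L$. Hence the vertices of $\nu$ are exactly the nonempty simplices $\sigma$ of $\K$ with $\sigma\cap V(\L)\neq\emptyset$; the vertices of $\Sd\L$ are those with $\sigma\subseteq V(\L)$, and I write $W$ for the rest. The cardinality $|\sigma\cap V(\L)|$ is $\Gamma$-invariant and strictly positive on $W$, and the plan is to delete the $\Gamma$-orbits of $W$ one after another in order of increasing $|\sigma\cap V(\L)|$. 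The crucial point is a domination criterion: if a subcomplex between $\Sd\L$ and $\nu$ still contains a vertex $\sigma\in W$, then $\sigma$ is dominated in it by $\tau:=\sigma\cap V(\L)\subsetneq\sigma$ \emph{unless} that subcomplex still contains a vertex $\rho\in W$ with $\rho\subsetneq\sigma$ and $\rho\cap V(\L)\subsetneq\sigma\cap V(\L)$. One checks this by running through a chain $c$ containing $\sigma$: every coface of $\sigma$ in $c$ contains $\tau$; every proper face $\rho$ of $\sigma$ in $c$ lies in $\nu$, hence meets $V(\L)$, and is automatically $\le\tau$ if $\rho\in\L$, while if $\rho\notin\L$ it is comparable to $\tau$ exactly when $\rho\cap V(\L)=\sigma\cap V(\L)$. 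Any obstructing $\rho$ therefore has $|\rho\cap V(\L)|<|\sigma\cap V(\L)|$, so its orbit has been deleted before $\sigma$ is processed, and $\sigma$ is indeed dominated by $\tau\in V(\Sd\L)$ at that stage; deleting the orbit $\Gamma\sigma$ is then a strong $\Gamma$-collapse, since removing the orbit of a dominated vertex is such a move. Concatenating these steps (strong $\Gamma$-collapse is transitive, by the sequential description in Remark~\ref{rem 2.4.0.0}) and noting that once every orbit of $W$ is removed the remaining complex has vertex set $V(\Sd\L)$ and its simplices are the chains in $F\L$, i.e.\ it equals $\Sd\L$, completes the argument.

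The hard part is this key assertion: it is a strong-homotopy, $\Gamma$-equivariant version of the classical fact that a second derived neighbourhood simplicially collapses onto the subcomplex. The delicate points are (i) singling out the vertex $\tau=\sigma\cap V(\L)$ that does the dominating together with the exact combinatorial obstruction to domination, (ii) choosing the order on $\Gamma\backslash W$ — by $|\sigma\cap V(\L)|$ increasing — which guarantees that every obstruction has been cleared before each deletion, and (iii) checking that this order is $\Gamma$-invariant, so that whole orbits may be removed at once and the non-equivariant bookkeeping goes through verbatim in the equivariant setting. The remaining ingredients — fullness of $\Sd\L$ in $\Sd\K$, the identity $\nu^1=\nu$, and the packaging into the definition of a $1$-NDR pair — are routine.
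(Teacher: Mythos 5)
Your argument is correct, but it runs along a genuinely different track from the paper's, and it is worth seeing how much shorter the paper's route is. Both proofs end up collapsing the same complex onto the same subcomplex: the vertex set of your $\nu_{\Sd^2(\K)}(\Sd^2(\L))$ is exactly the poset $P=\{c\in F\Sd(\K)\mid c\cap\L\neq\emptyset\}$ of chains in $F\K$ that contain at least one simplex of $\L$, and your dominating vertex $\tau=\sigma\cap V(\Sd\L)$ is nothing but $c\cap\L$ for the chain $c=\sigma$. The paper exploits this directly: it observes that $\Delta(P)$ \emph{is} the $1$-neighborhood of $\Sd^2(\L)$, that $c\mapsto c\cap\L$ is a single $\Gamma$-equivariant closure operator on $P$ with image $F\Sd(\L)$ (Example \ref{eg 2.4.0.0}), and then transfers the resulting poset collapse to a simplicial one via Proposition \ref{prop 2.4.3}. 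In other words, the entire orbit-by-orbit induction, the ordering of $\Gamma\backslash W$ by $|\sigma\cap V(\L)|$, and the domination/obstruction analysis that form the bulk of your proof are compressed into one closure operator applied at the level of face posets one subdivision earlier. What your version buys is a cleaner intermediate statement — the strong-homotopy, equivariant analogue of the classical fact that the derived neighborhood of a \emph{full} subcomplex collapses onto it after a single subdivision — which is reusable and makes the geometry more visible; what it costs is the careful bookkeeping (well-definedness of orbit deletions, $\Gamma$-invariance of the ordering, verification that each deletion is an elementary strong $\Gamma$-collapse), all of which you do handle correctly but none of which is needed once one works with the closure operator on $F\Sd(\K)$.
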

\begin{proof}
Note that $\Sd(\K)$ is the $\Gamma$-simplicial complex whose simplex is a finite chain of $\K \setminus \{ \emptyset\}$ with respect to the inclusion ordering. Set
$$X = \K \setminus \L = \{ \sigma \in \K \; | \; \sigma \not\in \L\}.$$
Then we have $F \Sd(\L) = \{ c \in F \Sd(\K) \; | \; c \cap X = \emptyset\}$. Set
\begin{eqnarray*}
P & = & \{ c \in F \Sd (\K ) \; | \; \textrm{There exists $\sigma \in \L$ with $\sigma \in c$.}\}\\
& = & \{ c \in F \Sd (\K ) \; | \; c \not\subset X\}.
\end{eqnarray*}
Note that $\Delta (P)$ is the 1-neighborhood of $\Sd^2(\L)$ in $\Sd^2(\K)$. Thus it suffices to show that $P$ strongly $\Gamma$-collapses to $F \Sd(\L)$ (Proposition \ref{prop 2.4.3}).

Define the closure operator (see Example \ref{eg 2.4.0.0}) $f: P \rightarrow P$ by $f(c) = c \cap \L$. Then $f(P) = F \Sd(\L)$. Thus the theorem follows.
\end{proof}

Combining Proposition \ref{prop 2.4.6} and Theorem \ref{thm 2.4.8}, we have the following:

\begin{cor} \label{cor 2.4.9}
Let $(\K,\L)$ be a pair of finite $\Gamma$-simplicial complexes. Then for $r \geq 2$, the pair $(\Sd^r(\K), \Sd^r(\L))$ is a $2^{r-2}$-NDR.
\end{cor}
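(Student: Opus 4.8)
The plan is a short induction on $r \geq 2$, using Theorem~\ref{thm 2.4.8} as the base case and Proposition~\ref{prop 2.4.6} as the inductive step. For $r = 2$, Theorem~\ref{thm 2.4.8} says directly that $(\Sd^2(\K), \Sd^2(\L))$ is a $1$-NDR pair, and since $1 = 2^{2-2}$ this is exactly the asserted statement.

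For the inductive step I would assume that $(\Sd^r(\K), \Sd^r(\L))$ is a $2^{r-2}$-NDR pair for some $r \geq 2$. Because $\Sd^r(\K)$ is again a finite $\Gamma$-simplicial complex and $\Sd^r(\L)$ is a $\Gamma$-subcomplex of it, Proposition~\ref{prop 2.4.6} applies to the pair $(\Sd^r(\K), \Sd^r(\L))$ with neighborhood parameter $2^{r-2}$, and gives that
$$(\Sd(\Sd^r(\K)), \Sd(\Sd^r(\L))) = (\Sd^{r+1}(\K), \Sd^{r+1}(\L))$$
is a $(2 \cdot 2^{r-2})$-NDR pair. Since $2 \cdot 2^{r-2} = 2^{r-1} = 2^{(r+1)-2}$, the induction closes and the corollary follows for all $r \geq 2$.

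I do not expect any genuine obstacle in this argument: the real work has already been packaged into Theorem~\ref{thm 2.4.8} (the base case, where one exploits the closure operator $c \mapsto c \cap \L$ on the relevant face poset together with Proposition~\ref{prop 2.4.3}) and into Proposition~\ref{prop 2.4.6} (the doubling step, which in turn rests on Lemma~\ref{lem 2.4.7} comparing $\nu^2 \circ \Sd$ with $\Sd \circ \nu$ and on Corollary~\ref{cor 2.4.5} that barycentric subdivision preserves strong $\Gamma$-collapses). The only point to be careful about when stitching the two together is the bookkeeping of the exponent of the neighborhood, which is handled by the arithmetic $2 \cdot 2^{r-2} = 2^{r-1}$; everything else is immediate from the stated results.
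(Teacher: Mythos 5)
Your proof is correct and is exactly the argument the paper intends: the corollary is stated as an immediate combination of Theorem~\ref{thm 2.4.8} (the base case $r=2$, giving a $1$-NDR) with iterated applications of Proposition~\ref{prop 2.4.6} (each subdivision doubling the parameter). The exponent bookkeeping $2\cdot 2^{r-2}=2^{(r+1)-2}$ is handled correctly.
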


\subsection{$r$-NDR for graphs}

In this section, we introduce the $r$-NDR pair of graphs and prove Proposition \ref{prop 3.4.8}.

\begin{dfn}
Let $G$ be a graph and $H$ a subgraph of $G$. Let $\nu_G(H) = \nu(H)$ be the subgraph of $G$ defined by
$$V(\nu(H)) = \{ v \in V(G) \; | \; \textrm{There is $w \in V(H)$ such that $(v,w) \in E(G)$.}\},$$
$$E(\nu(H)) = \{ (v,w) \in E(G) \; | \; \textrm{One of $v$ and $w$ is a vertex of $H$.}\}.$$
For $r \geq 1$, define the {\it $r$-neighborhood $\nu^r_G(H) = \nu^r(H)$ of $H$} inductively by $\nu^1(H) = \nu(H)$ and $\nu^{s+1}(H) = \nu (\nu^s(H))$.
\end{dfn}

\begin{prop} \label{prop 4.3.2}
Let $(\K,\L)$ be a pair of $\Gamma$-simplicial complexes. Then
$$\nu_{\A_T(\K)}^r(\A_T(\L)) \subset \A_T(\nu_{\K}^r(\L)).$$
\end{prop}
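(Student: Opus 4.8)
The plan is to reduce to the case $r=1$ by induction on $r$, and then to verify that case directly by unwinding the definitions through the projection $T\times\A(\K)\longrightarrow \A_T(\K)=\Gamma\backslash(T\times\A(\K))$.

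\textbf{Reduction to $r=1$.} Both $\nu_G(-)$ on subgraphs of a graph $G$ and $\nu_{\K}(-)$ on subcomplexes of a simplicial complex $\K$ are monotone for inclusion, and $\nu^r_{\K}(\L)$ is again a $\Gamma$-subcomplex of $\K$ because the star operation commutes with the $\Gamma$-action. One also checks that for every $\Gamma$-subcomplex $\mathcal{N}\subset\K$ the natural map $\A_T(\mathcal{N})\to\A_T(\K)$ is a monomorphism (the action $\gamma(x,v)=(x\gamma^{-1},\gamma v)$ restricts to $T\times\A(\mathcal{N})$), so all the graphs in sight are genuinely subgraphs of $\A_T(\K)$. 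Granting the case $r=1$ for all pairs and assuming the proposition for $r$, monotonicity of $\nu_{\A_T(\K)}$ together with the case $r=1$ applied to the pair $(\K,\nu^r_{\K}(\L))$ gives
\begin{align*}
\nu^{r+1}_{\A_T(\K)}(\A_T(\L)) &= \nu_{\A_T(\K)}\big(\nu^r_{\A_T(\K)}(\A_T(\L))\big)\subset \nu_{\A_T(\K)}\big(\A_T(\nu^r_{\K}(\L))\big)\\
&\subset \A_T\big(\nu_{\K}(\nu^r_{\K}(\L))\big)=\A_T(\nu^{r+1}_{\K}(\L)).
\end{align*}

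\textbf{The case $r=1$.} A vertex of $\A_T(\K)$ is an orbit $\Gamma(x,v)$ with $x\in V(T)$, $v\in V(\K)$, and $(\Gamma p,\Gamma q)$ is an edge of $\A_T(\K)$ if and only if $(p,\gamma q)$ is an edge of $T\times\A(\K)$ for some $\gamma\in\Gamma$; writing $p=(x,v)$, $q=(y,w)$, this means $(x,y\gamma^{-1})\in E(T)$ and $\{v,\gamma w\}$ is a simplex of $\K$. If $\Gamma(x,v)$ is a vertex of $\nu_{\A_T(\K)}(\A_T(\L))$ it has an $\A_T(\K)$-edge to some $\Gamma(y,w)$ with $w\in V(\L)$, so there is $\gamma$ with $(x,y\gamma^{-1})\in E(T)$ and $\{v,\gamma w\}\in\K$; since $\gamma w\in V(\L)$ and $\{v\}\cup\{\gamma w\}\in\K$, we get $v\in V({\rm st}_{\K}(\gamma w))\subset V(\nu_{\K}(\L))$, so $\Gamma(x,v)$ is a vertex of $\A_T(\nu_{\K}(\L))$. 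For edges, let $(\Gamma(x,v),\Gamma(y,w))$ be an edge of $\nu_{\A_T(\K)}(\A_T(\L))$ with $w\in V(\L)$, and pick $\gamma$ with $(x,y\gamma^{-1})\in E(T)$ and $\{v,\gamma w\}\in\K$. The key point is that $\{v,\gamma w\}$ is in fact a simplex of $\nu_{\K}(\L)$: since $\{v,\gamma w\}\cup\{\gamma w\}=\{v,\gamma w\}\in\K$, we have $\{v,\gamma w\}\in{\rm st}_{\K}(\gamma w)\subset\nu_{\K}(\L)$ because $\gamma w\in V(\L)$. Hence $(v,\gamma w)\in E(\A(\nu_{\K}(\L)))$, so $\big((x,v),\gamma(y,w)\big)\in E(T\times\A(\nu_{\K}(\L)))$, and therefore $(\Gamma(x,v),\Gamma(y,w))$ is an edge of $\A_T(\nu_{\K}(\L))$. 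This proves the $r=1$ inclusion and completes the induction.

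\textbf{Main obstacle.} The argument is essentially a definition chase, and the only step requiring genuine care is the observation used for the edge inclusion: a $1$-simplex $\{v,z\}$ of $\K$ with $z\in V(\L)$ is a simplex of $\nu_{\K}(\L)$, not merely of $\K$. This is precisely what forces the neighborhood of $\A_T(\L)$ to land in $\A_T(\nu_{\K}(\L))$ rather than just in $\A_T(\K)$; the remaining work is bookkeeping with the graph product $\times$ and with the passage to the $\Gamma$-quotient.
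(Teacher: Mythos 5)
Your proof is correct and follows the same strategy as the paper: reduce to $r=1$ by monotonicity and induction, with the $r=1$ case being a direct unwinding of the construction of $\A_T$ (which the paper states "is deduced from the construction of $\A_T$ and is omitted," and which you verify explicitly). The only difference is cosmetic: the paper peels off the innermost $\nu$ in its chain of inclusions while you peel off the outermost, and you supply the omitted definition chase in full.
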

\begin{proof} The case $r = 1$ is deduced from the construction of $\A_T$ (Section 3.1) and is omitted. Thus we have
$$\nu_{\A_T(\K)}^r (\A_T(\K)) \subset \nu_{\A_T(\K)}^{r-1} (\A_T(\nu_{\K}(\L))) \subset \cdots \subset \A_T(\nu_{\K}^r(\L)).\qedhere$$
\end{proof}

\begin{cor} \label{cor 4.3.3}
If $(\K,\L)$ is an $r$-NDR pair of $\Gamma$-simplicial complexes, then the pair of graphs $(\A_T(\K), \A_T(\L))$ is an $r$-NDR pair of graphs.
\end{cor}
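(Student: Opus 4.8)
The plan is to produce an explicit witness for the $r$-NDR property on the graph side by pushing the witness from the simplicial side through $\A_T$. Since $(\K,\L)$ is an $r$-NDR pair of $\Gamma$-simplicial complexes, by definition there is a $\Gamma$-subcomplex $\A$ of $\K$ with $\nu_{\K}^r(\L) \subset \A$ such that $\A$ strongly $\Gamma$-collapses to $\L$. I would claim that the subgraph $\A_T(\A)$ of $\A_T(\K)$ witnesses that $(\A_T(\K), \A_T(\L))$ is an $r$-NDR pair of graphs.

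Two verifications are needed, and both are immediate from results already established. For the neighborhood condition, Proposition \ref{prop 4.3.2} gives $\nu_{\A_T(\K)}^r(\A_T(\L)) \subset \A_T(\nu_{\K}^r(\L))$; applying the functor $\A_T$ to the inclusion $\nu_{\K}^r(\L) \subset \A$ yields $\A_T(\nu_{\K}^r(\L)) \subset \A_T(\A)$, and composing the two gives $\nu_{\A_T(\K)}^r(\A_T(\L)) \subset \A_T(\A)$. For the deformation-retraction condition, Proposition \ref{prop 4.3.1} applied to the pair $(\A,\L)$, which satisfies the hypothesis that $\A$ strongly $\Gamma$-collapses to $\L$, gives that $\A_T(\L)$ is a $\times$-homotopy deformation retract of $\A_T(\A)$. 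These are precisely the two conditions in the definition of an $r$-NDR pair of graphs, so the corollary follows.

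The one point requiring a little care --- the main, though minor, obstacle --- is the book-keeping about how $\A_T$ interacts with subcomplex inclusions: one should confirm that when $\L' \subset \K'$ is an induced $\Gamma$-subcomplex, $\A_T(\L')$ sits inside $\A_T(\K')$ as a subgraph of the kind demanded by the definitions of $\times$-homotopy deformation retract and $r$-NDR pair. This follows from the explicit description $\A_T(-) = \Gamma\backslash(T\times\A(-))$ recalled at the end of Section 3.1: the graph $\A$ sends an induced subcomplex to an induced subgraph since a two-element set is an edge of $\A(\L')$ exactly when it is a simplex of $\L'$, hence of $\K'$; the product with $T$ preserves this; and $\Gamma$-invariance of $\L'$ makes the orbit map restrict compatibly. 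No estimates or transfinite arguments enter; the content of the corollary is entirely carried by Propositions \ref{prop 4.3.1} and \ref{prop 4.3.2}.
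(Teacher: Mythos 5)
Your argument is exactly the paper's proof: take the witnessing subcomplex $\L'$ (your $\A$) containing $\nu^r_{\K}(\L)$ that strongly $\Gamma$-collapses to $\L$, apply Proposition \ref{prop 4.3.2} to get $\nu^r_{\A_T(\K)}(\A_T(\L)) \subset \A_T(\nu^r_{\K}(\L)) \subset \A_T(\L')$, and apply Proposition \ref{prop 4.3.1} to get the $\times$-deformation retraction. The additional remark on how $\A_T$ carries subcomplex inclusions to subgraph inclusions is correct and harmless, though the paper leaves it implicit.
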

\begin{proof}
Let $\L'$ be a subcomplex of $\K$ containing $\nu^r_{\K}(\L)$ such that $\L'$ strongly $\Gamma$-collapses to $\L$. By Proposition \ref{prop 4.3.2}, we have
$$\nu^r_{\A_T(\K)}(\A_T(\L)) \subset \A_T(\nu^r_{\K}(\L)) \subset \A_T(\L').$$
Proposition \ref{prop 4.3.1} implies that $\A_T(\L)$ is a $\times$-deformation retract of $\A_T(\L')$.
\end{proof}

\begin{cor} \label{cor 4.3.4}
Let $(\K,\L)$ be a pair of finite $\Gamma$-simplicial complexes. Then for $r \geq 2$, the pair $(\A_T(\Sd^r(\K)), \A_T(\Sd^r(\L)))$ is a $2^{r-2}$-NDR pair of graphs.
\end{cor}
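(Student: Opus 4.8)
The plan is to chain together the two corollaries that have just been established: Corollary \ref{cor 2.4.9}, which controls how $r$-NDR-ness of a pair of $\Gamma$-simplicial complexes improves under iterated barycentric subdivision, and Corollary \ref{cor 4.3.3}, which says that the functor $\A_T$ carries $r$-NDR pairs of $\Gamma$-simplicial complexes to $r$-NDR pairs of graphs. Since neither of these leaves any nontrivial gap, the argument is essentially a two-line composition and I do not expect any genuine obstacle; the only point requiring a moment's care is checking that the hypotheses of the two inputs really do match up.

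First I would invoke Corollary \ref{cor 2.4.9} applied to the pair $(\K, \L)$: since $(\K, \L)$ is a pair of \emph{finite} $\Gamma$-simplicial complexes and $r \geq 2$, it yields that $(\Sd^r(\K), \Sd^r(\L))$ is a $2^{r-2}$-NDR pair of $\Gamma$-simplicial complexes. Here I would note in passing that iterated barycentric subdivision preserves both finiteness and the $\Gamma$-action, so $(\Sd^r(\K),\Sd^r(\L))$ is again a legitimate pair of finite $\Gamma$-simplicial complexes, as required.

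Next I would feed this pair into Corollary \ref{cor 4.3.3}, using $(\Sd^r(\K), \Sd^r(\L))$ in the role of $(\K,\L)$ and $2^{r-2}$ in the role of $r$. Since $(\Sd^r(\K), \Sd^r(\L))$ is a $2^{r-2}$-NDR pair of $\Gamma$-simplicial complexes, Corollary \ref{cor 4.3.3} immediately gives that the pair of graphs $(\A_T(\Sd^r(\K)), \A_T(\Sd^r(\L)))$ is a $2^{r-2}$-NDR pair of graphs, which is exactly the assertion. The hardest part, such as it is, is purely bookkeeping: making sure the NDR index $2^{r-2}$ is transported unchanged through $\A_T$, which is precisely the content of Corollary \ref{cor 4.3.3} (itself resting on Proposition \ref{prop 4.3.2} and Proposition \ref{prop 4.3.1}), so nothing new needs to be proved.
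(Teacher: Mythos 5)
Your proposal is correct and follows essentially the same route as the paper: the subdivision step (the paper cites the ingredients Corollary \ref{cor 2.4.5} and, implicitly, Proposition \ref{prop 2.4.6} and Theorem \ref{thm 2.4.8}, which together give Corollary \ref{cor 2.4.9}) followed by transport through $\A_T$ via Corollary \ref{cor 4.3.3}. If anything, your citation of Corollary \ref{cor 2.4.9} directly is the cleaner way to package the first step.
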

\begin{proof}
This follows from Corollary \ref{cor 2.4.5}, Proposition \ref{prop 4.3.1}, and Corollary \ref{cor 4.3.3}.
\end{proof}

Let $T$ be a finite connected right $\Gamma$-graph. The following theorem asserts that if $r$ is sufficiently large, then the class of $r$-NDR's satisfies the gluing lemma with respect to $\Sing_T$-complexes.

\begin{thm} \label{thm 4.3.5}
Let $r$ be a positive integer, $(G,H)$ an $r$-NDR pair of graphs, $f: H \rightarrow Y$ a graph homomorphism, and $X$ the pushout $Y \cup_H G$. Suppose that the finite right $\Gamma$-graph $T$ has at least one edge and the diameter of $T$ is smaller than $r$. Then the diagram
$$\begin{CD}
\Sing_T(H) @>>> \Sing_T(G)\\
@VVV @VVV\\
\Sing_T(Y) @>>> \Sing_T(X)
\end{CD}$$
is a homotopy pushout square in the category ${\bf SSet}^{\Gamma}$ of $\Gamma$-simplicial sets. In other words, the natural map
$$|\Sing_T(Y)| \cup_{|\Sing_T(H)|} |\Sing_T(G)| \longrightarrow |\Sing_T(X)|$$
is a $\Gamma$-homotopy equivalence. A similar assertion holds for $\Hom_T$-complexes.
\end{thm}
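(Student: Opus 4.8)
The plan is to interpolate a third graph between $H$ and $G$ coming from the $r$-NDR structure, and then prove the statement by a two-step factorisation together with the pasting law for homotopy pushouts in ${\bf SSet}^{\Gamma}$. First I would use the $r$-NDR hypothesis to choose an induced subgraph $G'$ of $G$ with $\nu^r_G(H)\subseteq G'\subseteq G$ such that $H$ is a $\times$-homotopy deformation retract of $G'$, and form the pushout $X'=Y\cup_H G'$. Since $G'\subseteq G$ this is a subgraph of $X$, and by Lemma \ref{lem x-pushout} $Y$ is a $\times$-homotopy deformation retract of $X'$. This produces a commutative diagram of graphs
$$\begin{CD}
H @>>> G' @>>> G\\
@VVV @VVV @VVV\\
Y @>>> X' @>>> X
\end{CD}$$
whose two inner squares are pushouts and whose outer rectangle is the square in question. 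I would then reduce the theorem to showing that, after applying $\Sing_T$, both the left-hand square and the right-hand square become homotopy pushouts in ${\bf SSet}^{\Gamma}$; the conclusion for the outer rectangle follows by the pasting law. The $\Hom_T$-version follows by running the same argument, or by transporting along the natural $\Gamma$-homotopy equivalence $|\Sing_T(-)|\simeq|\Hom_T(-)|$ of Corollary \ref{cor 4.2.2}.

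For the left-hand square I would observe that $\Sing_T$ sends $\times$-homotopy deformation retracts to $\Gamma$-homotopy equivalences: a $\times$-homotopy deformation retraction gives a retraction $r$ with $\iota r\simeq_\times{\rm id}$ and $r\iota={\rm id}$, and by Lemma \ref{lem 2.2.1} together with Corollary \ref{cor 4.2.2} the induced maps $\Sing_T(H)\to\Sing_T(G')$ and $\Sing_T(Y)\to\Sing_T(X')$ are $\Gamma$-weak equivalences. Each $\Sing_T(\text{graph})$ is cofibrant in ${\bf SSet}^{\Gamma}$, because the $\Gamma$-action on $\Sing_T$ does not permute the ordered vertices of a simplex, so every isotropy group acts trivially on the simplices it fixes and the object is built from $\mathcal{I}_\Gamma$-cells. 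Since in a model category a commutative square of cofibrant objects one of whose horizontal edges is a weak equivalence is a homotopy pushout if and only if the opposite horizontal edge is a weak equivalence, the left-hand square is a homotopy pushout.

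For the right-hand square I would prove the stronger statement that $\Sing_T$ takes it to an honest pushout, $\Sing_T(X)\cong\Sing_T(G)\cup_{\Sing_T(G')}\Sing_T(X')$ in ${\bf SSet}^{\Gamma}$; as $\Sing_T(G')\hookrightarrow\Sing_T(G)$ is a (relative $\mathcal{I}_\Gamma$-cell) cofibration and all objects are cofibrant, such a pushout is a homotopy pushout, and the pasting law then finishes the proof. Since colimits in ${\bf SSet}^{\Gamma}$ are computed levelwise, this reduces to a purely combinatorial claim: every graph homomorphism $\varphi:T\times\Sigma^n\to X$ factors through $G\to X$ or through $X'\to X$, and the intersection of the two families is exactly $\mathcal{G}(T\times\Sigma^n,G')$. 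This is where the hypothesis on the diameter of $T$ enters. The image of $\varphi$ is a connected subgraph of $X$ of diameter smaller than $r$ (a short computation, using that $T$ is connected with at least one edge and that its diameter is $<r$). In a pushout of graphs no edge joins a vertex of $G\setminus H$ to a vertex of $Y\setminus H$; hence if the image of $\varphi$ meets $Y\setminus H$, then any vertex of $G\setminus H$ occurring in the image is joined to $H$ by a walk of length $<r$ inside $G$, so it lies in $\nu^r_G(H)\subseteq G'$, and $\varphi$ factors through $X'$; otherwise the image misses $Y\setminus H$ and $\varphi$ factors through $G$. An analogous, easier bookkeeping argument shows the overlap is $\mathcal{G}(T\times\Sigma^n,G')$, using that $G'$ is the intersection of $G$ and $X'$ inside $X$.

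The main obstacle is precisely this last combinatorial step: checking that no homomorphism out of $T\times\Sigma^n$ can straddle the pushout, and that the resulting fibre product of sets is the one described. Everything else — the pasting law, the homotopy-pushout criterion, cofibrancy of $\Sing_T$-objects, and the behaviour of $\Sing_T$ on $\times$-deformation retracts — is formal once this is in place. For the $\Hom_T$-version the same walk argument applies to the support $\bigcup_{v}\eta(v)$ of a multi-homomorphism $\eta:T\to X$, which again has diameter $<r$; alternatively one transports the $\Sing_T$-statement along Corollary \ref{cor 4.2.2}.
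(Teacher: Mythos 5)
Your proposal is correct and follows essentially the same route as the paper: you interpolate the same subgraph ($G'$ here, $H'$ in the paper) and pushout ($X'$ versus $Y'$), invoke Lemma \ref{lem x-pushout}, use the same diameter/walk argument to show that $\Sing_T$ turns the square on $(G',G,X',X)$ into an honest pushout along a cofibration, and convert the $\times$-deformation retracts into $\Gamma$-weak equivalences via Lemma \ref{lem 2.2.1} and Corollary \ref{cor 4.2.2}. The only difference is the final bookkeeping — you paste two homotopy pushout squares horizontally, whereas the paper compares the homotopy pushouts of the rows $\Sing_T(Y)\leftarrow\Sing_T(H)\rightarrow\Sing_T(G)$ and $\Sing_T(Y')\leftarrow\Sing_T(H')\rightarrow\Sing_T(G)$ via the gluing lemma — which is a purely formal, equivalent rearrangement.
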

\begin{proof}
By Proposition \ref{Cube lemma} and Corollary \ref{cor 4.2.2}, the case of $\Hom_T$-complexes follows from the case of $\Sing_T$-complexes. Thus we only give the proof of the case of $\Sing_T$. Since $(G,H)$ is an $r$-NDR pair, there is a subgraph $H'$ of $G$ containing $\nu^r(H)$ such that $H$ is a $\times$-deformation retract of $H'$. Let $Y'$ be the pushout $Y \cup_H H'$. Then $Y$ is a $\times$-deformation retract of $Y'$.

Consider the commutative square
$$\begin{CD}
\Sing_T(H') @>{i_*}>> \Sing_T(G)\\
@V{f'_*}VV @VV{\hat{f}_*}V\\
\Sing_T(Y') @>{j_*}>> \Sing_T(X),
\end{CD}$$
where $\hat{f} : G \rightarrow X = Y\cup_H G$ and $f' : H' \rightarrow Y' = Y \cup_H H' $ are the natural maps, $i : H' \hookrightarrow G$ and $j : Y' \hookrightarrow X$ are inclusions. We claim that the above square is a pushout square. To see this, we want to show that the diagram

\begin{eqnarray} \label{eqn 4.3.1}
\begin{CD}
\mathcal{G}(T \times \Sigma^n, H') @>{i_*}>> \mathcal{G}(T \times \Sigma^n, G)\\
@V{f'_*}VV @VV{\hat{f}_*}V\\
\mathcal{G}(T \times \Sigma^n, Y') @>{j_*}>> \mathcal{G}(T \times \Sigma^n, X)
\end{CD}
\end{eqnarray}
is a pushout diagram in the category ${\bf Set}^\Gamma$ of $\Gamma$-sets. Let $\varphi : T \times \Sigma^n \rightarrow X$ be a graph homomorphism. If the image of $\varphi$ does not intersect $Y$, then $\varphi$ factors through $G$. Suppose that there is a vertex $v$ of $T \times \Sigma^n$ with $\varphi (v) \in V(Y)$. Since the diameter of $T \times \Sigma^n$ is smaller than or equal to $r$ and $Y'$ is the $r$-neighborhood of $Y$ in $X$, we have that $\varphi$ factors through $Y'$. Thus we have shown that the map
$$\hat{f}_* \sqcup j_* : \mathcal{G}(T \times \Sigma^n , G) \sqcup \mathcal{G}(T \times \Sigma^n, Y') \longrightarrow \mathcal{G}(T \times \Sigma^n, X)$$
is surjective. Next let $\psi_0 : T \times \Sigma^n \rightarrow G$ and $\psi_1 : T \times \Sigma^n \rightarrow Y'$ be graph homomorphisms with $\hat{f} \psi_0 = j \psi_1$. Since the image of $\hat{f} \psi_0$ is contained in $Y'$, we have that $\psi_0$ factors through $H'$, and let $\psi : T \times \Sigma^n \rightarrow H'$ with $i \psi = \psi_0$. Since
$$j f' \psi = \hat{f} i \psi = \hat{f} \psi_0 = j \psi_1$$
and $j$ is a monomorphism, we have that $f' \psi = \psi_1$. Thus the diagram (\ref{eqn 4.3.1}) is a pushout diagram.

Next we consider the commutative diagram
\begin{eqnarray} \label{eqn 4.3.2}
\begin{CD}
\Sing_T(Y) @<<< \Sing_T(H) @>>> \Sing_T(G)\\
@V{j_*}VV @V{i_*}VV @|\\
\Sing_T(Y') @<<< \Sing_T(H') @>>> \Sing_T(G).
\end{CD}
\end{eqnarray}
Since $i$ and $j$ are $\times$-homotopy equivalences, we have that the all vertical arrows in the above diagram are $\Gamma$-weak equivalences (Lemma \ref{lem 2.2.1} and Corollary \ref{cor 4.2.2}). Let $E$ (or $E'$) be the homotopy pushout (see Chapter 13 of \cite{Hirschhorn}) of the upper (or lower, respectively) horizontal arrows. Then we have a commutative diagram
$$\begin{CD}
E @>{\simeq_\Gamma}>> \Sing_T(Y) \cup_{\Sing_T(H)} \Sing_T(G)\\
@V{\simeq_\Gamma}VV @VVV\\
E' @>{\simeq_\Gamma}>> \Sing_T(X).
\end{CD}$$
The left vertical arrow is a $\Gamma$-weak equivalence since the all vertical arrows in commutative diagram (\ref{eqn 4.3.2}) are $\Gamma$-weak equivalences (see Proposition 13.5.3 of \cite{Hirschhorn}). The horizontal arrows are $\Gamma$-weak equivalences since the map $\Sing_T(H) \rightarrow \Sing_T(G)$ and $\Sing_T(H') \rightarrow \Sing_T(G)$ are cofibrations in ${\bf SSet}^\Gamma$, and
$$\Sing_T(X) = \Sing_T(Y') \cup_{\Sing_T(H')} \Sing_T(G)$$
as was proved (see Corollary 13.3.8 of \cite{Hirschhorn}).
\end{proof}

\noindent {\it Proof of Proposition \ref{prop 3.4.8}}.
By Proposition \ref{Cube lemma} and the hypothesis, the map 
\begin{eqnarray} \label{unit pushout}
u_K \cup_{u_L} u_{L'} : K' = K \cup_L L' \rightarrow \hat{S}_T \circ \hat{A}_T(K) \cup_{\hat{S}_T \circ \hat{A}_T (L)} \hat{S}_T \circ \hat{A}_T(L')
\end{eqnarray}
is a $\Gamma$-weak equivalence. Since $(\hat{A}_T(K), \hat{A}_T(L))$ is a $2^{k-2}$-NDR pair (Corollary \ref{cor 4.3.4}), Theorem \ref{thm 4.3.5} implies that the map
$$S_T \circ \hat{A}_T(K) \cup_{S_T \circ \hat{A}_T(L)} S_T \circ \hat{A}_T(L') \rightarrow S_T \circ \hat{A}_T(K')$$
is a $\Gamma$-weak equivalence. Here we write $S_T$ instead of $\Sing_T$. Consider the commutative diagram
$$\begin{CD}
S_T \circ \hat{A}_T(K) \cup_{S_T \circ \hat{A}_T(L)} S_T \circ \hat{A}_T(L') @>{\simeq_\Gamma}>> S_T \circ \hat{A}_T(K')\\
@VVV @VV{\simeq_\Gamma}V\\
\hat{S}_T \circ \hat{A}_T(K) \cup_{\hat{S}_T \circ \hat{A}_T(L)} \hat{S}_T \circ \hat{A}_T (L') @>{u'}>> \hat{S}_T \circ \hat{A}_T(K').
\end{CD}$$
Proposition \ref{Cube lemma} implies that the left vertical arrow is a $\Gamma$-weak equivalence. Thus the lower horizontal arrow $u'$ is a $\Gamma$-weak equivalence. Since (\ref{unit pushout}) is a $\Gamma$-weak equivalence, we have that $u_{K'} = u' \circ (u_K \cup_{u_L} u_{L'})$ is a $\Gamma$-weak equivalence.
\qed

\end{document}